\newcommand\dfn[1]{{\bf #1}}  
\providecommand{\cal}{\mathcal}
\newcommand{\calA}{{\cal{A}}}
\newcommand{\calU}{{\cal{U}}}
\newcommand{\calTT}{{\hbox{\scriptsize $\mathcal{T}$}}}
\newcommand{\dpower}[2]{[#1]^{#2}}
\newcommand{\Nat}{{\mathbb{N}}}
\newcommand{\ntr}{{n\in\omega}}
\newcommand{\El}{{\cal{L}}}
\newcommand{\etaf}{\hat{\eta}}
\newcommand{\la}{\langle}
\newcommand{\ra}{\rangle}
\newcommand{\Ra}{\Rightarrow}
\newcommand{\dd}{{\downarrow}}
\newcommand{\uu}{{\uparrow}}
\newcommand{\IR}{\mathbb R}
\newcommand{\natN}{\mathbb N}
\newcommand{\natQ}{\mathbb Q}
\newcommand{\Clop}{\mathrm{Clop}}
\newcommand{\Endpt}{\mathrm{Endpt}}
\newcommand{\lastpt}{\mathrm{endpt}}
\newcommand{\tip}{\mathrm{tip}}
\newcommand{\E}{\mathscr E}
\newcommand{\Fin}[1]{[#1]^{<\omega}} 
\newcommand{\Finnn}[1]{[#1]_*^{<\omega}}  
\newcommand{\w}{\omega}
\newcommand{\Ord}{\mathrm{Ord}}
\newcommand{\cl}{\mathrm{cl}}
\newcommand{\rank}{{\mathrm{rk}\scriptstyle{W\negmedspace F}}}
\newcommand{\height}{{\mathrm{ht}\scriptstyle{C\negthinspace B}}}
\theoremstyle{plain}
\newtheorem{theorem}{Theorem}[section]
\newtheorem{theorema}{Theorem}  
\newtheorem{corollary}[theorem]{Corollary}
\newtheorem{proposition}[theorem]{Proposition}
\newtheorem{lemma}[theorem]{Lemma}
\newtheorem{fact}[theorem]{Fact}
\newtheorem{algorithm}[theorem]{Computation Rules}
\newtheorem{Fact}{\rm Fact}
\newtheorem{Example}[Fact]{\rm Example}
\theoremstyle{definition}
\newtheorem{question}[theorem]{Question}
\newtheorem{example}[theorem]{Example}
\theoremstyle{remark}
\newtheorem{remark}[theorem]{Remark}
\newtheorem*{theorem*}{Theorem} 
\newtheorem*{theoremA*}{Theorem A} 
\newtheorem*{corollary*}{Corollary}
\newtheorem*{proposition*}{Proposition}
\newtheorem*{lemma*}{Lemma}
\newtheorem*{claim*}{Claim}
\newtheorem*{fact*}{Fact}
\newtheorem*{observation*}{Observation}
\newtheorem*{remark*}{Remark}
\newtheorem*{example*}{Example}
\newtheorem*{comment*}{Comment}
\newtheorem*{question*}{Question}
\newtheorem*{construction*}{Construction}
\newcommand{\powerset}{\mathscr{P}}
\newcommand{\mA}{\mathscr{A}}
\newcommand{\mB}{\mathscr{B}}
\newcommand{\mF}{\mathscr{F}}
\newcommand{\mL}{\mathscr{L}}
\newcommand{\mS}{\mathscr{S}}
\newcommand{\mU}{\mathscr{U}}
\newcommand{\mV}{\mathscr{V}}
\newcommand{\mW}{\mathscr{W}}
\newcommand{\join}{\vee}
\renewcommand{\Join}{\bigvee}
\newcommand{\Min}{\mathrm{Min}}
\newcommand{\Max}{\mathrm{Max}}
\newcommand{\wh}[1]{\widehat{#1}}
\newcommand{\eqdf}{\hbox{\bf \,:=\,}}
\newcommand{\img}[2]{#1[#2]} 
\newcommand{\inv}[2]{{#1}^{-1}[#2]} 
\newcommand{\pair}[2]{\langle #1, #2 \rangle} 
\newcommand{\setof}[2]{\{#1\colon #2\}}
\newcommand{\SETOF}[2]{\bigl\{#1\colon #2\bigr\}}
\newcommand{\seqof}[2]{\langle #1\colon #2\rangle}
\newcommand{\seqn}[2]{\langle  #1\rangle_{#2}}
\newcommand{\seqnn}[2]{\langle  #1\rangle_{#2}}
\newcommand{\sett}[2]{\{#1\}_{#2}}
\newcommand{\fnn}[3]{#1:#2 \rightarrow #3} 
\newcommand{\Ult}{\mathrm{Ult}}
\newcommand{\FS}{\mathrm{FS}}
\newcommand{\IS}{\mathrm{IS}}
\newcommand{\BULLET}{\mbox{\tiny$\bullet$}}
\begin{document} 

\title[Vietoris and Priestley spaces]{Vietoris hyperspaces over scattered Priestley spaces}

	\author[Banakh]{Taras Banakh}
	\address{Ivan Franko National University 
	of Lviv (Ukraine) and
	Jan Kochanowski University, Kielce (Poland)}
	\email{t.o.banakh@gmail.com} 

	\author[Bonnet]{Robert Bonnet}
	\address{Laboratoire de Math\'ematiques (LAMA), 
	Universit\'e de Savoie Mont Blanc, 	(France)}
	\email{bonnet@univ-smb.fr \  and \   			   			Robert.Bonnet@math.cnrs.fr}

	\author[Kubi\'s]{Wies\l aw Kubi\'s}
	\address{Institute of Mathematics,
	Czech Academy of Sciences, Praha (Czech Republic)} 
	\email{kubis@math.cas.cz}
	\thanks{The first author has been partially supported by NCN grant
	DEC-2012/07/D/ST1/02087 (The National Science Center, Poland).
	Research of the second author was supported 
	by the Institute of Mathematics of the Czech Academy of Sciences.
	The third author has been supported by the GA \v CR grant 20-22230L (Czech Science Foundation).}
\subjclass{  
54H10, 
22A26, 
({\em primary}),
54G12, 
54D30, 
06E05, 
({\em secondary})}
\keywords{
Well-generated Boolean algebras, 
Priestley space, Vietoris hyperspace, Mr\'owka space, Lusin family, ladder system, Well ordering.
\medskip
} 
\subjclass{  
54H10, 
22A26, 
({\em primary}),
54G12, 
54D30, 
06E05, 
({\em secondary})}
\keywords{
Well-generated Boolean algebras, 
Priestley space, Vietoris hyperspace, Mr\'owka space, Lusin family, ladder system, Well ordering.
\medskip
} 
\date{\today}


\begin{abstract}
	We study Vietoris hyperspaces of closed and final sets of Priestley spaces. We are particularly interested in Skula topologies.
	A topological space is {\em Skula} if its topology is generated by differences of open sets of another topology. A compact Skula space is scattered and moreover has a natural well-founded ordering compatible with the topology, namely, it is a Priestley space. One of our main objectives is investigating Vietoris hyperspaces of general Priestley spaces, addressing the question when their topologies are Skula and computing the associated ordinal ranks. We apply our results to scattered compact spaces based on certain almost disjoint families, in particular, Lusin families and ladder systems.
\end{abstract}

\centerline{\small \bf 
Dedicated to the memory of Matatyahu Rubin (1946-2017)}

\maketitle

\tableofcontents

\section{Introduction}
\label{section-1}

This paper is motivated by a problem of detecting scattered
compact spaces that are homeomorphic to (or embed into)
scattered compact topological semilattices.
This problem has been addressed by T. Banakh, O. Gutik and M. Rajagopalan in \cite{GRS, BGR}. 
The papers of R.~Bonnet and M.~Rubin~\cite{BR1}, 
and of A.~Dow and S.~Watson~\cite{DW},
develop classes of compact scattered spaces with a closed partial ordering.
Such orders are ``well-founded''.
We study the relationship between the Cantor-Bendixson height
and the well-founded rank in these spaces.

A topological space is called a \dfn{Skula space} if its topology is generated by differences of open sets of another topology on the same set. This concept was invented by Ladislav Skula~\cite[Result 2.2]{Skula} in 1969, answering a purely category-theoretic question. Much later, in 1990, Dow and Watson~\cite{DW} observed that compact Skula spaces are scattered and they related this concept to well-generated Boolean algebras introduced by Bonnet and Rubin~\cite{BR1}.

A \dfn{Priestley space} is a compact 0-dimensional space endowed with a closed partial order such that clopen final subsets separate points (a set $S$ is \dfn{final} if $x \in S$, $x < y$ imply $y \in S$).
Priestley duality~\cite{{P}} establishes the correspondence between Priestley spaces and bounded distributive lattices, exactly in the same manner as Stone duality between compact 0-dimensional spaces and Boolean algebras. In fact, every compact 0-dimensional space is Priestley when endowed with the trivial ordering, therefore Priestley duality ``contains'' Stone duality. 

As it happens, a compact Skula space has a natural closed partial ordering that is well-founded and makes it a Priestley space. 
Our aim is to give more insight into this relation. 
In particular, we look at Vietoris hyperspaces of Priestley 
spaces, investigating when their topology is Skula.
In that case, scatteredness and 
well-foundedness are intimately linked, 
and we study for a Skula space space and its 
Vietoris hyperspace the relationship 
between its Cantor-Bendixson height and 
its well-founded rank.
We also consider a specialization: canonically Skula spaces. As we have mentioned above, one of our motivations is topological semilattice theory in which Priestley spaces play a significant role: The free semilattice over a Priestley space is its Vietoris hyperspace (see below for more details).

\subsection*{Preliminaries}

A \dfn{(join) semilattice}, is a set $X$ endowed with a binary operation, $\join : X \times X \to X$: the operation $\join$ should be associative, commutative, and idempotent (in the sense that $x \join x=x$ for all $x\in X$).
Every semilattice has a natural partial ordering: $x \leq y$ if and only if  $x \join y = y$.
By a \dfn{topological semilattice}
we mean a Hausdorff topological space $X$
endowed with a continuous join semilattice operation
$\join$.
In that case $\leq$ is closed (as subset of $X {\times}X$).

A \dfn{poset} is a partially ordered set.
For a point $p$ of a poset $P$ the set
$$\dd p = \setof{ q \in P }{ q \leq p }$$ 
is  called the \dfn{principal ideal} generated by $p$.
We say that $F \subseteq P$ is an \dfn{initial subset} whenever for every $p \in P$: if $p \in F$ then $\dd p \subseteq F$.
For instance, if $A \subseteq P$
then $\dd A \eqdf \bigcup_{p \in A} \dd p$ is an initial subset of $P$. 

Also $\uu p \eqdf \setof{ q\in P }{ q \geq p }$ for $p \in P$ is called the \dfn{principal filter} generated by $p$,
and $\uu A \eqdf \bigcup_{p \in A} \uu p$ is called a \dfn{final subset} of $P$. 

A poset $P$ is called a \dfn{linear ordering} or a 
\dfn{chain} if the elements of $P$ are pairwise comparable.

A subset $A$ of $P$ is called an \dfn{antichain}
if $A$ consists of pairwise incomparable elements.

A poset $\pair{P}{\leq}$ is \dfn{well-founded}  if each nonempty subset of $P$ has a minimal element, that is $P$ has no strictly decreasing sequence. 
A \dfn{well-ordering} is a well-founded linear ordering.

Following \cite{DW}, we call a Hausdorff topological space $X$ a \dfn{Skula space} if the topology of $X$ is generated by the base $\setof{ U\setminus V }{ U,V\in\calTT }$ for some topology $\calTT$ on $X$. A typical example of a (non-scattered) Skula space is the Sorgenfrey line whose topology is generated by the base $\setof{ U\setminus V }{ U,V\in\calTT }$ for the topology $\calTT=\{\emptyset,\IR\}\cup\{(-\infty,a):a\in\IR\}$.

The following characterization of Skula spaces combines some known results of Bonnet and Rubin \cite{BR1} and of Dow and Watson \cite{DW} with some new results proved in this paper.
To state the theorem, following \cite[\S2.3]{BR1}, we introduce the following notion.
For a space $X$, we say that a family $\mU \eqdf \{U_x:x\in X\}$ is a \dfn{clopen selector} or more simple \dfn{selector} for $X$ if each $U_x$ is a clopen (i.e. closed and open) subset of $X$ and if $\mU$ satisfies:
\begin{itemize}
\item[(1)] $x\in U_x$ for every $x\in X$, 
\item[(2)] for any distinct $x,y\in X$ either $x\notin U_y$ or $y\notin U_x$, and 
\item[(3)] for any $x\in X$ and $y\in U_x$ we get $U_y\subseteq U_x$. 
\end{itemize}
Given that every $U_x$ contains $x$, conditions (2) and (3) are equivalent to: 
\begin{itemize}
\item[{\rm(4)}]
the relation
$$x <^{\mU} y \iff x\neq y \text{ and } x \in U_y$$ is irreflexive and transitive. 
\end{itemize}
Therefore a clopen selector $\mU \eqdf \setof{ U_x }{ x\in X }$ for $X$
induces a partial order relation $\leq^{\mU}$ on $X$,
defined by 
$$x \leq^{\mU} y \quad \text{if and only} \quad U_x \subseteq U_y \, .
$$
If $\mU$ is understood from the context, then $\leq^{\mU}$
is denoted by $\leq$ and thus
\smallskip
\newline
\smallskip
\centerline{
$U_x = \dd x$ 
}
is the clopen principal ideal for any $x \in X$:  
in particular every principal ideal is clopen. 

Skula spaces were introduced (independently of Skula) by Bonnet and Rubin \cite{BR1} in the algebraic way as ``well-generated Boolean algebras''.
For a compact space $X$, we denote by $\Clop(X)$ the set of clopen subsets of $X$. 

A \dfn{well-generated Boolean algebra} is a Boolean algebra  that has a sublattice  (that is, a subset closed under meet and join) that generates the algebra and that has no strictly decreasing sequence for $\leq$. 

It turns out that
$X$ is a compact Skula space if and only if $\Clop(X)$ is well-generated (see Theorem~\ref{thm-2.1}(i)$\Leftrightarrow$(iv) below).
Equivalently, $B$ is a well-generated Boolean algebra if and only if its space $\Ult(B)$
of ultrafilters is a Skula space. 
This equivalence was proved in a preprint of Bonnet and Rubin published in \cite[Proposition 2.15(b)]{BR1} and independently by Dow and Watson \cite[Theorem 1]{DW}. 
But Lemma~4 and Theorem~3 in \cite{DW} are based on a misquotation from \cite{BR1}. 

Clopen selectors (and stronger notions,
as ``canonical clopen selectors''),
and thus Skula spaces, were intensively studied
by U. Abraham, R. Bonnet, W. Kubi\'s and M. Rubin
in \cite{ABK, ABKR, BR1, BR2, BR8, BR3, BR4, BR5, BR6} in terms of Boolean algebras, but not so much as topological spaces. 

Relationships between ``being well-generated'' and ``being Skula'' can be found in \cite[\S2.3]{BR1} and in \cite[Theorem 3]{DW} and is stated by (i)$\Leftrightarrow$(ii) in the following result.

\begin{theorema}
\label{thm-1}
For a compact Hausdorff space $X$ the following conditions are equivalent. 
\begin{itemize} 
\item[{\rm(i)}]
$X$ is a Skula space. 
\item[{\rm(ii)}]
The Boolean algebra $\Clop(X)$ of clopen subsets of $X$ is well-generated. 
\item[{\rm(iii)}]
$X$ embeds into a compact join semilattice $H(X)$ such that the join operation is continuous and the set of principal ideals of $H(X)$ is a clopen selector for $H(X)$.
\end{itemize} 
\end{theorema}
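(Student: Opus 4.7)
The plan is to handle the three implications separately. The equivalence (i)$\Leftrightarrow$(ii) is classical, due to Bonnet--Rubin and Dow--Watson as noted in the excerpt; I would simply invoke it. The key idea is that a well-founded sublattice $L\subseteq\Clop(X)$ generating the Boolean algebra yields a Skula base via $\setof{U\setminus V}{U,V\in L}$, while conversely a compactness argument extracts such a sublattice from any Skula base on a compact Hausdorff space.

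For (iii)$\Rightarrow$(i), given the embedding $X\hookrightarrow H(X)$ and the clopen selector $\{\dd p:p\in H(X)\}$ on $H(X)$, the family
$$\mU\eqdf\SETOF{\dd x\cap X}{x\in X}$$
is a clopen selector for $X$: each set is clopen in the subspace $X$ because $\dd x$ is clopen in $H(X)$, and the selector axioms (1)--(3) transfer verbatim when intersected with $X$. Hence $X$ admits a clopen selector and is Skula by the equivalence with (ii).

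For the main implication (i)$\Rightarrow$(iii), fix a clopen selector $\mU=\{U_x:x\in X\}$ on $X$, inducing the well-founded order $\leq^{\mU}$ in which $U_x=\dd x$ is clopen. In keeping with the paper's theme, $H(X)$ will be a Vietoris-type hyperspace of $(X,\leq^{\mU})$---for instance, the space of closed initial subsets of $X$ under union, equipped with an appropriate refinement of the Vietoris topology. The map $x\mapsto U_x$ then serves as the candidate embedding (continuity of the upper Vietoris part follows from axiom (3) for $\mU$: if $U_x\subseteq V$, then $U_y\subseteq U_x\subseteq V$ for every $y\in U_x$, and $U_x$ is a neighborhood of $x$). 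Compactness of $H(X)$ should follow from compactness of $X$ via the standard hyperspace construction, and continuity of union is a routine property of Vietoris-type topologies.

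The hardest point is choosing a topology on $H(X)$ that is simultaneously Hausdorff and compact, makes union continuous, and renders every principal ideal $\dd F=\{G\in H(X):G\subseteq F\}$ \emph{clopen}. In the plain Vietoris topology, $\dd F$ is closed but open only when $F$ is open in $X$, which fails for a generic closed initial subset; restricting $H(X)$ to clopen initial subsets destroys compactness. To resolve this, I would mimic the Skula construction at the level of the hyperspace, enlarging the topology by the differences of principal ideals and verifying by transfinite recursion---indexed by the well-founded rank of $(X,\leq^{\mU})$, aligned with the Cantor--Bendixson rank of $X$---that the enlarged topology remains compact while the principal ideals become clopen. This inductive alignment of scattered and well-founded ranks, a recurring theme of the paper, is the technical crux of the argument.
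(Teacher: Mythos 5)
Your treatment of (i)$\Leftrightarrow$(ii) (citation) and of (iii)$\Rightarrow$(i) (restricting the selector $\{\dd p : p \in H(X)\}$ to the image of $X$) is fine, and the latter is even a little more direct than the paper's route through closed orders. The problem is the implication (i)$\Rightarrow$(iii), where you correctly identify the construction ($H(X)=$ nonempty closed initial subsets of $(X,\leq^{\mU})$ with union, and $\eta\colon x\mapsto U_x$) but then stall at what you call the technical crux, and the resolution you sketch --- enlarging the hyperspace topology by differences of principal ideals and running a transfinite recursion on ranks to recover compactness --- is neither carried out nor the right move. It rests on a false premise: you assert that $\dd F$ is open in the Vietoris-type topology only when $F$ is open in $X$, ``which fails for a generic closed initial subset.'' In a Skula space it does not fail. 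This is exactly the paper's Theorem~\ref{thm-2.5}(2): if $K$ is a closed initial subset of $X$ and $x\in K$, then $U_x=\dd x\subseteq K$ because $K$ is initial, so $K=\bigcup_{x\in K}U_x$ is open, hence clopen, and by compactness $K=U_{x_1}\cup\dots\cup U_{x_n}$ is a finite union of clopen principal ideals. This one-line observation is the missing key lemma, and with it no enlargement of the topology is needed.

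Once you have it, the argument closes as in the paper: take on $H(X)$ the topology generated by the sets $U^+=\setof{K\in H(X)}{K\subseteq U}$ and their complements for $U$ a clopen initial subset of $X$ (this is \emph{not} the subspace topology from $\exp(X)$, and $H(X)$ need not be closed in $\exp(X)$, so ``compactness via the standard hyperspace construction'' does not apply --- the paper proves compactness separately in Fact~\ref{fact-2.2} by a centered-family argument). Then every $K\in H(X)$ is itself a clopen initial subset of $X$, so its principal ideal $K^+$ in $H(X)$ is one of the basic clopen sets; checking that $\setof{K^+}{K\in H(X)}$ satisfies the selector axioms is immediate, union is continuous because $F\cup G\in U^+$ iff $F\in U^+$ and $G\in U^+$, and $\eta$ is a continuous injection from a compact space to a Hausdorff one (with $\eta^{-1}[U^+]=U$), hence an embedding. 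Your proposed transfinite recursion aligning Cantor--Bendixson and well-founded ranks plays no role here; in the paper that comparison is the content of Section~\ref{section-3}, which is logically downstream of Theorem~A rather than an ingredient of its proof.
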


This result will be proved in \S\ref{section-2}, and  
to show the part (i)$\Ra$(iii), we use Priestley spaces and their hyperspaces that we introduce now. 

\smallskip 
 
A \dfn{Priestley space}  
is a compact space having a partial order $\pair X \leq$ with the following separation property: 

\begin{itemize} 
\item[\BULLET]
For every $x, y \in X$: if $x \not\leq y$ there exists a clopen final subset $V$ of $X$ such that $x \in V$ and $y \notin V$. 
\end{itemize}
Therefore any Priestley space is $0$-dimensional. 

The notion of Priestley space was introduced by Hilary Ann  Priestley \cite{P} in 1970 to extend the duality in Boolean algebras to distributive lattices. 
For development of this notion, see the books of 
B. A. Davey and H. A. Priestley \cite[Ch. 11]{DP};
G.~Gierz, K.~Hofmann, K.~Keimel, J.~Lawson, M.~Mislove and D.~Scott \cite{GHKL1, GHKL2} and S. Roman \cite[Ch. 10]{R}.   
Trivially

\begin{itemize} 
\item[{}]
Any compact $0$-dimensional space can be
regarded as a Priestley space, where the partial ordering is the 
equality. 
\item[{}] 
Any Skula space is Priestley since any clopen selector  separates the points. 
 
\end{itemize}
In a Priestley space, if $\leq$ is understood from the context, then we omit it.
Now we shall discuss the notions of hyperspace over a Priestley space. 
The notion of hyperspace, over a compact space, was introduced by Leopold Vietoris \cite{V} in 1922.
For a detailed presentation of hyperspaces, we refer to 
E. Michael \cite{Michael}, the books of 
I. Illanes and S. Nadler~\cite{IS}, 
the section ``The exponential'' in the book of J. D. Monk~\cite[Ch. 1]{Monk}, 
 or in the handbook of K. P. Hart, J. Nagata and J.E. Vaughan~\cite[\S b-6 Hyperspaces]{HNV}. 
 We can see the hyperspace $H(X)$ over a Priestley space $X$ as a free join-semilattice over $X$. 
 For a general definition of free objects we refer to \cite{MacLane}.
 
\smallskip

Let $\pair X \leq$ be a Priestley space.
We define its  \dfn{(Vietoris) hyperspace} $H(X)$ 
as follows. 
\begin{itemize}
\item[\BULLET]
$H(X)$ is the set of all nonempty closed initial subsets of $\pair X \leq$. 

\item[\BULLET] 
For $F, G \in H(X)$, we set $F \leq G$ if and only if $F \subseteq G$. 

\item[\BULLET]
The topology $\calTT$ on $H(X)$ is the topology 
generated by the sets 
\\
$U^+ \eqdf \setof{ K\in H(X) }{ K\subseteq U }$ \quad and \quad 
$V^- \eqdf \setof{ K\in H(X) }{ K\cap V\neq \emptyset }$
where $U$ is any clopen initial subset and $V$ is any clopen final subset of $X$. 
Note that for any clopen initial subset $U$, setting $V = K \setminus U$, we have $H(X) \setminus U^+ = V^-$. 
\end{itemize} 
 
If $\leq$ is the equality on $X$, then $H(X)$, denoted by \dfn{$\exp(X)$},  is the set of all nonempty compact subsets of $X$, and is the well-known Vietoris hyperspace. 
Note that $H(X)$ is a subset of the compact space $\exp(X)$ and that $H(X)$ is compact (see Fact~\ref{fact-2.2} below) but not necessarily closed in $\exp(X)$, because the topology on $H(X)$ is not necessarily the induced topology of $\exp(X)$ (see Section~\ref{section-5.1}).

Section~\ref{section-2} is devoted to the proof of the following result that implies 
Part (iii)$\Ra$(vi) in Theorem~\ref{thm-2.1}. 

\begin{theorema}
\label{thm-2}
\begin{it} 
If $X$ be a Priestley space then  
\begin{itemize}
\item[{\rm(1)}]
$H(X)$ is a Priestley space (Theorem~{\rm\ref{thm-2.3}}).  
\item[{\rm(2)}]
$\pair{F}{G} \mapsto F \join G \eqdf F \cup G$ is a continuous semilattice  operation on $H(X)$. 
\item[{\rm(3)}]
$X$ is topologically embeddable in $H(X)$ by the increasing continuous map $\eta : x \mapsto \dd x$.
\end{itemize} 
\smallskip

\noindent
Moreover if $X$ is a Skula space then  $X$ is Priestley and
\begin{itemize}
\item[{\rm(4)}]
Any (nonempty) closed initial subset $K$ of $X$, i.e. a member of $H(X)$, is a finite union of clopen principal ideals of $X$. 
In particular $K$ is clopen
(Theorem~{\rm\ref{thm-2.5}}). 
\item[{\rm(5)}]
$H(X)$ is a Skula space. 
\item[{\rm(6)}]
The family $\setof{ K^+ }{ K \in H(X) }$ is a clopen selector for $H(X)$.
\end{itemize} 
\end{it}
\end{theorema}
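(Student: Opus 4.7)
The plan is to treat items (1)--(3) by direct computation from the Vietoris subbase, and for the Skula case to invoke item (4) as a separate structural fact (Theorem~\ref{thm-2.5}) and derive (5), (6) from it via Theorem~\ref{thm-1}.

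For (1), compactness of $H(X)$ follows from Alexander's subbase theorem: given a subbasic cover by sets $U_\alpha^+$ (with $U_\alpha$ clopen initial) and $V_\beta^-$ (with $V_\beta$ clopen final), one exploits the duality $H(X)\setminus U^+=(X\setminus U)^-$ and considers the closed initial set $C=X\setminus\bigcup_\beta V_\beta$; if $C=\emptyset$ then $\{V_\beta\}$ covers $X$, so compactness of $X$ yields a finite subcover forcing a finite subcover of $H(X)$, while if $C\neq\emptyset$ then $C\in H(X)$ must lie in some $U_\alpha^+$, and a further compactness refinement completes the argument. For the Priestley separation, given $F\not\subseteq G$ I would pick $x\in F\setminus G$; since $G$ is initial, $x\not\leq y$ for every $y\in G$, so Priestley separation in $X$ yields for each such $y$ a clopen final $V_y\ni x$ with $y\notin V_y$, and compactness of $G$ produces a finite intersection $V=V_{y_1}\cap\cdots\cap V_{y_n}$ that is clopen, final, contains $x$, and is disjoint from $G$; then $V^-$ is a clopen final subset of $H(X)$ separating $F$ from $G$.

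Items (2) and (3) are routine on the subbase. For (2), the preimage of $U^+$ under union is $U^+\times U^+$ and the preimage of $V^-$ is $(V^-\times H(X))\cup(H(X)\times V^-)$. For (3), $\dd x$ is closed because $\leq$ is closed in $X\times X$, and initiality of $U$ together with finality of $V$ gives $\eta^{-1}(U^+)=U$ and $\eta^{-1}(V^-)=V$; hence $\eta$ is a continuous order-preserving injection from a compact into a Hausdorff space, therefore an embedding. In the Skula case, granting (4), item (6) follows quickly: by (4) every $K\in H(X)$ is clopen in $X$, so $K^+$ is clopen in $H(X)$---its complement is $(X\setminus K)^-$---and the three selector axioms reduce to trivial set-theoretic properties of $\subseteq$. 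Item (5) then falls out of Theorem~\ref{thm-1}, since $H(X)$ is itself a compact join semilattice (by (1), (2)) embedding trivially into itself, whose principal ideals form a clopen selector by (6), thereby verifying condition (iii) of Theorem~\ref{thm-1}.

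The main obstacle is item (4): decomposing an arbitrary closed initial subset of a Skula space into a \emph{finite} union of clopen principal ideals requires a delicate interplay of well-foundedness of the selector order with compactness, and is precisely the content of Theorem~\ref{thm-2.5}. Among the items I would prove directly, the Priestley separation for $H(X)$---specifically the compactness-and-intersection passage from pointwise separation in $X$ to a single clopen final disjoint from $G$---is the only genuinely non-mechanical step.
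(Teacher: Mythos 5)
Your treatment of items (1)--(3), (5) and (6) is correct and follows essentially the same route as the paper. The paper proves compactness of $H(X)$ via the centered-family form of the Alexander subbase argument (Fact~\ref{fact-2.2}), which is the dual of your cover formulation, and its Priestley separation argument is the mirror image of yours: it covers $F$ by clopen \emph{initial} sets missing a point $x\in G\setminus F$ and separates with a $V^+$, where you intersect clopen \emph{final} sets containing $x\in F\setminus G$ and separate with a $V^-$; both are correct. Your subbase computations for (2) and (3), the derivation of (6) from (4), and the derivation of (5) from (6) via Theorem~\ref{thm-1}(iii)$\Rightarrow$(i) coincide with Theorem~\ref{thm-2.3} and Theorem~\ref{thm-2.5}(4),(1); there is no circularity in the last step, since that implication of Theorem~\ref{thm-1} is established independently of the hyperspace construction.

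The one genuine gap is item (4), which you defer wholesale to Theorem~\ref{thm-2.5} and describe as requiring ``a delicate interplay of well-foundedness of the selector order with compactness.'' It requires neither delicacy nor well-foundedness: fix a clopen selector $\setof{U_x}{x\in X}$ and give $X$ the induced order, so that $U_x=\dd x$ is clopen for every $x$. If $K$ is a nonempty closed initial subset, then $U_x\subseteq K$ for every $x\in K$ by initiality, so $\setof{U_x}{x\in K}$ is an open cover of the compact set $K$; a finite subcover gives $K=U_{x_1}\cup\dots\cup U_{x_n}$, a finite union of clopen principal ideals, hence clopen. Since (4) is the hinge on which (5) and (6) turn, a self-contained proof of Theorem~\ref{thm-2} should include this short argument rather than leave it as the declared main obstacle.
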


\begin{remark*}
\label{KomentWithNoLabel}
Say that a distributive lattice $L$ is \emph{nice} whenever the the meet of two members of $L$ is the join of finitely many elements of $L$. 
For instance, by Theorem~\ref{thm-2}(4), if $X$ is Skula then $H(X) \cup \{\emptyset\}$ is a nice join-semillattice.
This phenomenon appears in \cite[Lemma~2.8(2)]{{BR1}} and 
\cite[Proposition 2.3(a1)]{ABK}.
\end{remark*}

Let us recall that a topological space $X$ is \dfn{scattered} 
if each nonempty subspace $A \subseteq X$ has an isolated point for the induced topology. 

For a given Skula space, to introduce its (Cantor-Bendixson) height $\height(X)$ and its (well-founded) rank $\rank(X)$, we need the following easy observation: for completeness we re-prove this fact. 

\begin{Fact}
\label{Fact-1}
\begin{it}
Let $\mU$ be a clopen selector of a Skula space for $X$. 
Then
\begin{itemize}
\item[{\rm(1)}]
$\pair{\mU }{\subseteq }$ is well-founded: see  \cite[Proposition 2.15(a)]{BR1}.
\item[{\rm(2)}]
Any minimal element $U_x$ of $\pair{\mU}{\subseteq}$ is of the form $\{x\}$ with $x \in X$: 
see the proof of Proposition {\rm2.7(b)} in \cite{BR1}. 

\item[{\rm(3)}]
$X$ is a scattered space: see \cite[Proposition 2.7(b)]{BR1}. 

\end{itemize}
\end{it}
\end{Fact}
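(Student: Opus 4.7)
The three items form a natural chain: (1) is the substantive part, while (2) and (3) are short consequences of (1) together with the algebraic conditions (2)--(3) of a clopen selector.

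For (1), I would argue by contradiction. Suppose $U_{x_0}\supsetneq U_{x_1}\supsetneq\cdots$. First observe that the points $x_n$ are pairwise distinct, since $x_n=x_{n+1}$ would give $U_{x_n}=U_{x_{n+1}}$. By compactness of $X$, the sequence $(x_n)$ has a cluster point $x\in X$. The crucial observation is that for each $m$, the open set $X\setminus U_{x_m}$ contains $x_n$ only for the finitely many $n<m$: indeed, for $n\geq m$ we have $x_n\in U_{x_n}\subseteq U_{x_m}$. Were $x$ to lie in $X\setminus U_{x_m}$, this would be a neighborhood of $x$ meeting only finitely many terms of the sequence, contradicting the cluster-point property. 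Hence $x\in\bigcap_m U_{x_m}$. Moreover $x\neq x_m$ for every $m$, because $x=x_m\in U_{x_{m+1}}$ would force $U_{x_m}\subseteq U_{x_{m+1}}$ by selector condition~(3), contradicting strict descent. Now selector condition~(2) gives $x_n\notin U_x$ for every $n$, so the clopen neighborhood $U_x$ of $x$ contains none of the $x_n$, contradicting once more that $x$ is a cluster point.

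For (2), suppose $U_x$ is minimal in $(\mU,\subseteq)$ and take any $y\in U_x$ with $y\neq x$. Condition~(3) gives $U_y\subseteq U_x$, while condition~(2) forbids $x\in U_y$. Hence $x\in U_x\setminus U_y$, so $U_y\subsetneq U_x$, contradicting minimality; this forces $U_x=\{x\}$. For (3), given a nonempty $A\subseteq X$, apply the well-foundedness from (1) to the subfamily $\{U_x:x\in A\}$ to obtain $x_0\in A$ with $U_{x_0}$ minimal in it. The argument just used for (2), applied within $A$, shows $U_{x_0}\cap A=\{x_0\}$, exhibiting $x_0$ as an isolated point of $A$ and proving scatteredness of $X$.

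The only non-routine step is (1), and the delicate bookkeeping there is ensuring that the cluster point $x$ is simultaneously forced into every $U_{x_m}$ (via the decreasing structure and the openness of the complements) and distinct from each $x_m$ (via strict descent and condition~(3)), so that condition~(2) can finally be brought to bear and deliver the contradiction.
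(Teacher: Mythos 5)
Your proof is correct, and parts (2) and (3) follow the paper's own argument almost verbatim (your version of (3), which takes a minimal element of the subfamily $\{U_x : x\in A\}$ using the well-foundedness of the whole selector, is if anything a little more self-contained than the paper's remark that $\{U_y\cap Y: y\in Y\}$ is again a clopen selector). Where you genuinely diverge is in (1). The paper's proof never mentions cluster points or condition (2) of a selector: it sets $F=\bigcap_n U_{x_n}$, observes that $y\in F$ forces $U_y\subseteq F$ by condition (3), so that $F=\bigcup_{y\in F}U_y$ is \emph{open}; then the sets $U_{x_n}\setminus F$ form a decreasing sequence of nonempty closed sets with empty intersection, contradicting compactness via the finite intersection property. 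Your route instead extracts a cluster point $x$ of the sequence $(x_n)$, traps it in every $U_{x_m}$, rules out $x=x_m$ via condition (3), and only then invokes condition (2) to make $U_x$ a neighborhood of $x$ missing every $x_n$. Both are compactness arguments; the paper's buys a reusable structural fact (the intersection of a decreasing chain of selector sets is open) and needs only condition (3), while yours is more elementary in flavor but leans on condition (2) and on the bookkeeping you rightly flag as the delicate step. Either is a complete proof.
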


\begin{proof} 
(1)
Suppose by contradiction that $U_{x_1} \supsetneqq U_{x_2} \supsetneqq U_{x_3} \supsetneqq \cdots$ is a strictly decreasing sequence of members of $\mU$. 
Let $F = \bigcap_n U_{x_n}$. 
For every $y \in F$, $U_y \subseteq F$, 
and thus $\bigcup_{y \in F} U_y = F$. 
Since each $U_y$ is open, $F$ is open. 
Hence each $U_{x_i} \setminus F$ is closed and nonempty, 
and $\bigcap_{i \in \omega} ( U_{x_i} \setminus F) = \emptyset$. 
A contradiction. 

(2) 
Let $U_x$ be a minimal element of $\mU$. 
We claim that $U_x = \{ x \}$. 
Otherwise there is $y \in U_x$ such that $y \neq x$.
By the definition, $x \not\in U_y$
and thus $U_y \subseteq U_x$. 
Therefore $U_x$ is not a minimal element of $\mU$. 
A contradiction. 

(3) 
By Part~(1), $X$ has isolated points. 
Next let $Y$ be a nonempty subspace of $X$. 
Then $\mV \eqdf \setof{ U_y \cap Y }{y \in Y}$ is a clopen selector for $Y$ and thus $Y$ has isolated points. 
\end{proof}

Now we introduce the ordinal invariants of Priestley and Skula spaces. 
For a subspace $A\subseteq X$ of a topological space $X$
denote by  $A^{[1]}$ the set of all non-isolated points in $X$.
Put $X^{[0]}=X$ and for every ordinal $\alpha>0$
define its
$\alpha$-th \dfn{(Cantor-Bendixson) derivative $X^{[\alpha]}$}
by the recursive formula: 
$$
X^{[\alpha]}=\mbox{$\bigcap$}_{\beta<\alpha}(X^{[\beta]})^{[1]}.
$$ 
It is easy to see that a topological space $X$ is scattered
if and only if $X^{[\rho]}=\emptyset$ for some ordinal $\rho$.
In this case
$X = \bigcup_{\alpha\leq\rho}\,  \bigl( X^{[\alpha]}\setminus X^{[\alpha+1]} \bigr) $. 
Therefore for each $x \in X$ we define the
\dfn{(Cantor-Bendixson) height}, denoted by $\height_X(x)$ or more simply by $\height(x)$, of $x$ in~$X$ by the formula:
$$
\height_X(x) = \alpha \quad \text{if and only if} \quad 
x \in X^{[\alpha]}\setminus X^{[\alpha+1]} \, .
$$ 
For instance $\height_X(x)=0$ if and only if $x$ is isolated in $X$.
The ordinal 
$$
\height(X)=\mbox{$\sup$}_{x\in X} \, \height_X(x) 
$$ 
will be called the
(Cantor-Bendixson) height of $X$. 
For example $\height(\omega^{\beta}+1)  = \beta$ for any ordinal $\beta \geq 1$. 
It follows that for a compact Hausdorff space $X$
the $\height(X)$-th derived set \dfn{$\Endpt(X) \eqdf X^{[\height(X)]}$} is finite and nonempty, and $\Endpt(X)$
is called the \dfn{set of end-points} of $X$. 
A scattered topological space $X$ is called
\dfn{unitary} if the set $X^{[\height(X)]}$
is a singleton, denoted by \dfn{$\{\lastpt(X) \}$} and
the element $\lastpt(X)$ is called the \dfn{end-point} of $X$. 

\smallskip 

Next let $\pair{W}{\leq}$ be a nonempty well founded poset. 
Therefore $\pair{W}{\leq}$ has a rank function
$\fnn{ \rank_W }{ W }{ \Ord }$ defined by  
$$
\rank_W(p) = \sup(\setof{ \rank_W(q)+1 }{ q < p }. 
$$
This ordinal, also denoted by $\rank(p)$, is called the \dfn{(well-founded) rank} of $p$ in $W$.  
For instance $\rank_W( p ) = 0$ if and only if $p \in \Min(W)$, i.e. $p$ is a minimal element of $W$.
The range of $\rank$, namely $\sup(\setof{ \rank_W(p)+1 }{ p \in W })$ is called the rank of $W$ and is denoted $\rank(W)$. 
Therefore $\rank_W(p) < \rank(W)$ for every $p \in W$.

Note that if $\fnn{ \rho }{ W }{ \Ord }$ is a strictly increasing function 
then $\rank_W(p) \leq \rho(p)$ for $p \in W$ and $\rank_W$ ``leaves no gap'': if $\gamma < \rank_W(p)$ then there is $q \in W$ such that $q < p$ and $\rank_W(q) = \gamma$.

Now assume that $X$ is a Skula space. 
By Theorem~\ref{thm-2}(5), $H(X)$ is also Skula and 
thus by Fact~\ref{Fact-1}(1), the posets $\pair{X) }{\leq}$ 
and $\pair{ H(X) }{ \subseteq }$ are well-founded.

In Section~\ref{section-3}, we show the following result.  

\begin{theorema}
\label{thm-3}
\begin{it}
Let $X$ be a Skula space and let $\mU$ be a clopen selector for $X$. 
Then
\begin{itemize}
\item[{\rm(1)}]
$\height(X) \leq \rank(X)< \w^{\height(X)+1}$ (Theorem~{\rm\ref{thm-3.1}}). 
\item[{\rm(2)}] 
$\rank(H(X)) \leq \w^{\rank(X)}$ 
(Theorem~{\rm\ref{thm-3.6}}). 
\end{itemize}
\end{it}
\end{theorema}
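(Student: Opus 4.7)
I would prove the two parts of Theorem~\ref{thm-3} by separate transfinite inductions.

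\textit{Part (1).} The lower bound $\height(X) \leq \rank(X)$ follows by induction on $\alpha$ to show that $\rank(x) < \alpha$ implies $x \notin X^{[\alpha]}$. The successor step $\alpha = \beta+1$ with $\rank(x) = \beta$ is the substance: every $y \in U_x \setminus \{x\}$ satisfies $y <^\mU x$ and hence $\rank(y) < \beta$, so by induction $y \notin X^{[\beta]}$; thus $U_x \cap X^{[\beta]} \subseteq \{x\}$, making $x$ isolated in $X^{[\beta]}$, so $x \notin X^{[\beta+1]}$. Applying this with $\alpha = \rank(X)$ yields $X^{[\rank(X)]} = \emptyset$ and hence $\height(X) \leq \rank(X)$. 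For the upper bound $\rank(X) < \omega^{\height(X)+1}$ I would induct on $h = \height(X)$; the base $h=0$ is immediate since a compact discrete space is finite. For $h > 0$ the top level $X^{[h]} = \{z_1, \ldots, z_m\}$ is finite, and using well-foundedness of $\leq^\mU$ I enumerate the $z_i$ so that each $z_i$ is $\leq^\mU$-minimal in $X^{[h]} \setminus \bigcup_{j<i} U_{z_j}$. The clopen sets $C_i := U_{z_i} \setminus \bigcup_{j<i} U_{z_j}$ and $D := X \setminus \bigcup_i U_{z_i}$ partition $X$, with each $C_i$ unitary of height $h$ (meets $X^{[h]}$ only in $z_i$), while $\height(D) < h$ so $\rank(D) < \omega^h$ by induction. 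The crucial separate lemma, which unlocks the induction, is: \emph{if $Y$ is a unitary compact Skula space of height $h$, then $\rank(Y) < \omega^{h+1}$}. Its proof uses condition~(2) of the clopen selector: for $y <^\mU z$ in $Y$ (where $z$ is the unique top-height element), necessarily $z \notin U_y$, so $U_y \cap Y^{[h]} = \emptyset$ and $\height(U_y) < h$, giving $\rank(y) < \omega^h$ by the inductive hypothesis on $h$; the elements of $Y \setminus U_z$ (a clopen of height $< h$) are handled in parallel, and assembling these bounds across the decomposition yields $\rank(X) < \omega^{h+1}$.

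\textit{Part (2).} By Theorem~\ref{thm-2}(4), each $K \in H(X)$ has a canonical antichain representation $K = \dd x_1 \cup \cdots \cup \dd x_n$. I would prove by induction on $\rank_{H(X)}(K)$ that
\[ \rank_{H(X)}(K) \leq \omega^{\rank_X(x_1)} \oplus \cdots \oplus \omega^{\rank_X(x_n)}, \]
where $\oplus$ denotes the Hessenberg (natural) sum. The order-preserving injection $K^+ := \{L \in H(X) : L \subseteq K\} \hookrightarrow \prod_{i=1}^n \bigl((\dd x_i)^+ \cup \{\emptyset\}\bigr)$ given by $L \mapsto (L \cap \dd x_1, \ldots, L \cap \dd x_n)$, combined with the classical bound $\rank_{P \times Q}(p, q) \leq \rank_P(p) \oplus \rank_Q(q)$ for products of well-founded posets, reduces the inductive step to bounding each coordinate rank by $\omega^{\rank_X(x_i)}$, which is precisely the hypothesis applied to the principal ideal $\dd x_i$. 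Since every $\rank_X(x_i) < \rank(X)$, a finite Hessenberg sum of $\omega^{\rank_X(x_i)}$ stays strictly below $\omega^{\rank(X)}$, so taking the supremum over $K$ gives $\rank(H(X)) \leq \omega^{\rank(X)}$.

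\textit{Main obstacle.} The delicate step in Part~(1) is the unitary-case lemma, and specifically controlling elements of $Y$ that lie \emph{above} $z$ in the selector order---such elements can exist because $\leq^\mU$ need not be compatible with the Cantor--Bendixson order, as the $\omega^2+1$ example after Fact~\ref{Fact-1} already suggests. Condition~(2) of the clopen selector, which forces $z \notin U_y$ whenever $y <^\mU z$, is the key input that keeps $\height(U_y) < h$ and enables the induction. In Part~(2), the combinatorial subtlety is that a single $L \subsetneq K$ can simultaneously refine several of the antichain generators of $K$; the product-poset embedding together with the Hessenberg-sum inequality provides precisely the bookkeeping needed.
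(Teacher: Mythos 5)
Your proposal is correct and rests on the same two pillars as the paper's proof, but each half is organized differently enough to be worth comparing. For the upper bound in Part~(1), the paper (Lemma~\ref{lemma-3.2}) inducts on the ordinal $o_X(x)=\omega^{\height(\dd x)}\cdot(|\Endpt(\dd x)|+1)$ and peels off one minimal endpoint $e\in\Endpt(\dd x)$ at a time, splitting $\dd x$ into the initial part $\dd e$ and the final part $(\dd x)\setminus(\dd e)$; this buys the sharper local bound of Theorem~\ref{thm-3.1}, whereas your induction on $\height(X)$ with a simultaneous greedy enumeration of $X^{[h]}$ is simpler but only reaches the stated $\omega^{\height(X)+1}$. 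Both arguments use selector condition~(2) to force $\height(U_y)<h$ below the top level, and both need the inequality $\rank(P)\leq\rank(A)+\rank(P\setminus A)$ for $A$ initial (the paper's lexicographic-sum step); you should make this explicit, since your $C_i$ are successive differences of the filtration $U_{z_1}\subseteq U_{z_1}\cup U_{z_2}\subseteq\cdots$ by initial sets and the rank of a point of $C_i$ computed in $X$ is not its rank computed in $C_i$. For Part~(2) the paper proves Zaguia's theorem via the subadditivity claim $\rank(I'\cup I'')\leq\rank(I')\oplus\rank(I'')$; your product embedding $L\mapsto(L\cap\dd x_i)_i$ together with the product rank formula is an equivalent repackaging, with one small overshoot: adjoining $\emptyset$ to each factor raises each coordinate rank by $1$, so the embedding yields $\rank_{H(X)}(K)\leq\bigoplus_i\bigl(\omega^{\rank_X(x_i)}+1\bigr)$ rather than your displayed invariant. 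Since each $\rank_X(x_i)<\rank(X)$ and a finite $\oplus$-sum of ordinals below $\omega^{\rank(X)}$ stays below $\omega^{\rank(X)}$ (property~(v)), the conclusion $\rank(H(X))\leq\omega^{\rank(X)}$ survives, but the invariant as written does not propagate through the product step; also, the case $n=1$, where the embedding is vacuous, must be argued directly by applying the inductive hypothesis to the maximal elements of proper subideals of $\dd x_1$ --- this is the paper's Claim~3 and is where the exponential actually enters.
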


We introduce two stronger notions of Skula spaces developped in Section~\ref{section-4}.

\smallskip

We say that $X$ is a \dfn{canonical Skula space} whenever $X$ has a clopen selector $\mU \eqdf \setof{ U_x }{ x \in X }$ such that for every $x \in X$ the subspace $U_x$ is unitary and $\lastpt(U_x) = x$. 

Next we say that $X$ is a \dfn{tree-like canonical Skula space}, or more simply a \dfn{tree-like Skula space} whenever $X$ has a canonical clopen selector $\mU \eqdf \setof{ U_x }{ x \in X }$ satisfying: for $x, y \in X$,  $U_x$ and $U_y$ are either comparable or disjoint. 

By the definitions: 
$$
\text{tree-like Skula } \ \ \overset{\rm{(1)}}{\Longrightarrow}  \ \
\text{ canonical Skula }  \ \ \overset{\rm{(2)}}{\Longrightarrow} \ \
\text{ Skula }  \ \ \ \overset{\rm{(3)}}{\Longrightarrow}  \ \
\text{ Scattered}.
$$ 
None implication is reversible. 
For (1): take an uncountable almost disjoint $\mA$ on $\omega$ and consider its Mr\'owka space $\Psi(\mA)$ (see below for a precise definition).
For (2) see \cite[Theorem 3.25(a)]{BR1} and  
for (3), see \cite[Theorem 3.4]{BR1} or \cite[Example 1]{DW}. 
Note that any scattered and compact space of (Cantor-Bendixson) height $2$ is a canonical Skula space (Proposition~3.3(a) in~\cite{BR1}).

\medskip

First we develop some results on canonical Skula spaces. 

\begin{Fact} 
\label{Fact-2}
\begin{it}
Let $X$ be a Skula space and $\mU$ be a clopen selector for $X$. 
The following are equivalent. 
\begin{itemize}
\item[{\rm(i)}]
 $\mU$ is a canonical clopen selector for $X$. 
\item[{\rm(ii)}]
For any $x \in X$ the subspace $U_x$ is unitary and $\rank_{X}(x) = \height_{X}(x)$.
\end{itemize}
Therefore the well-founded rank $\rank_{X}(x)$ of $x \in X$ does not depend of the choice of canonical clopen selector for $X$. 
In particular $\rank(X) = \height(X)$. 
\end{it}
\end{Fact}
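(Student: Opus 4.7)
The plan is to use that each $U_x$ is clopen in $X$, so that restriction to $U_x$ preserves both the Cantor--Bendixson derivatives and the well-founded rank induced by $\mU$, and then to compare the recursive formula for $\rank_X(x)$ with an analogous identity for the height function of a unitary compact scattered space.

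First I would record two preparatory observations. Since $U_x$ is clopen in $X$, a straightforward transfinite induction on $\alpha$ gives $(U_x)^{[\alpha]} = X^{[\alpha]} \cap U_x$, whence $\height_{U_x}(y) = \height_X(y)$ for every $y \in U_x$. Similarly, for $y \leq^\mU x$ we have $U_y \subseteq U_x$, so the strict predecessor set of $y$ with respect to $\leq^\mU$ is the same inside $U_x$ as inside $X$; consequently $\rank_{U_x}(y) = \rank_X(y)$.

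For the implication (i)$\Rightarrow$(ii), I proceed by transfinite induction on $\rank_X(x)$. The base case $\rank_X(x) = 0$ uses Fact~1(2), which forces $U_x = \{x\}$, so $x$ is isolated and $\height_X(x) = 0$. For the inductive step, the definition of rank together with the induction hypothesis applied to each $y <^\mU x$ yields
\[
\rank_X(x) = \sup\{\rank_X(y) + 1 : y <^\mU x\} = \sup\{\height_{U_x}(y) + 1 : y \in U_x \setminus \{x\}\}.
\]
The remaining key point is a purely topological identity: in any compact scattered space $Y$ that is unitary with endpoint $e$, one has $\height_Y(e) = \sup\{\height_Y(y) + 1 : y \in Y \setminus \{e\}\}$. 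I would prove this by cases on $\height(Y)$: in the successor case $\beta+1$, the endpoint $e$ is a limit of points in $Y^{[\beta]}\setminus\{e\}$, all of which have height exactly $\beta$ by the uniqueness of $e$ at height $\beta+1$; in the limit case $\lambda$, the membership $e \in Y^{[\beta+1]}$ for every $\beta < \lambda$ guarantees that heights below $\lambda$ are attained on $Y\setminus\{e\}$ with no strict upper bound less than $\lambda$. Applied to $Y = U_x$, whose endpoint is $x$ by~(i), this yields $\rank_X(x) = \height_{U_x}(x) = \height_X(x)$.

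For (ii)$\Rightarrow$(i), fix $x$ and let $e$ denote the unique endpoint of $U_x$. For any $y \in U_x \setminus \{x\}$, applying (ii) to both $x$ and $y$ gives $\height_X(y) = \rank_X(y) < \rank_X(x) = \height_X(x)$, and hence $\height_{U_x}(y) < \height_{U_x}(x)$ by the first preparatory observation. Thus $x$ is the unique point of maximal height in $U_x$ and must coincide with $e$, proving $\lastpt(U_x) = x$. The final clause then follows: since $\height_X$ is a topological invariant, the pointwise identity $\rank_X(x) = \height_X(x)$ shows that the rank function does not depend on the choice of canonical clopen selector, and passing to the supremum over $x \in X$ yields $\rank(X) = \height(X)$. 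The main obstacle will be the topological lemma about unitary compact scattered spaces in the limit-ordinal case, where one must extract points of unboundedly large height below $\height(e)$ from the assumption that $e$ lies in every lower derivative $Y^{[\beta]}$; the other steps are direct bookkeeping with the definitions of clopen selectors, Cantor--Bendixson derivatives, and well-founded ranks.
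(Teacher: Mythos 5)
Your proof is correct and follows essentially the same route as the paper: the implication (i)$\Rightarrow$(ii) is an induction on $\rank_X(x)$ resting on the identity $\height_Y(e)=\sup\setof{\height_Y(y)+1}{y\in Y\setminus\{e\}}$ for a unitary compact scattered space $Y$ with end-point $e$, which the paper uses implicitly and you rightly isolate and verify (your successor/limit case analysis is sound). The one genuine, if minor, improvement is in (ii)$\Rightarrow$(i): by applying hypothesis (ii) to both $x$ and an arbitrary $y\in U_x\setminus\{x\}$ you get $\height_X(y)=\rank_X(y)<\rank_X(x)=\height_X(x)$ directly, so $x$ is the unique point of maximal height in the clopen set $U_x$ and hence its end-point, with no need for the transfinite induction the paper runs in that direction.
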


\begin{proof} 
Note that for every $z \in X$ we have 
$\rank_{X}(z) = \rank_{U_z}(z)$ 
and $\height_{X}(z) = \height_{U_z}(z)$.

We show (i) implies (ii) by induction on the (well-founded) rank.
If $\rank_{U_x}(x) = 0$  
then $x$ is minimal and $U_x = \{x\}$ and thus $\height_{U_x}(x) = 0$.  
Next assume that $\rank_{X}(y) = \height_{X}(y)$ for every $y < x$.  Since $U_x$ is unitary and $\lastpt(U_x) = x$, we have
\begin{eqnarray} 
\rank_{X}(x) 
&
= 
&
 \sup \setof{ \rank_{X}(y) {+}1 }{ y \in U_x {\setminus} \{x\} } 
\nonumber
\\
&
=
& 
\sup \setof{ \height_{X}(y) {+}1 }{ y \in U_x {\setminus} \{x\} } 
\, = \, \height_{X}(x).
\nonumber 
\end{eqnarray}
Therefore $\rank_{X}(x) = \height_{X}(x)$. 

We prove (ii) implies (i) by induction on $\rho(x) \eqdf \rank_{X}(x) = \rank(U_x)$.  
If $\rho(x) = 0$ then $U_x$ is finite and unitary, and thus $U_x = \{x\}$. 
Next fix $x \in X$. 
Suppose that $\lastpt(U_y) = y$ for every $y < x$. 
Since 
$\height(U_y) = \height_X(y) = \rank_{X}(x) = \rho(y) < \rho(x)$ 
and $U_y$ is unitary for every $y<x$, we have 
$\bigcup_{y<x} U_y = U_x \setminus \{x\}$. 
Now because $U_x$ is unitary, $x = \lastpt(U_x)$.
\end{proof}

We shall see in Section~\ref{section-4} (Theorem~\ref{thm-4.1}) the following main result. 

\begin{theorema}
\label{thm-4}
\begin{it}
If $X$ is a canonical Skula space then its hyperspace $H(X)$ is canonically Skula. 

Moreover, by Computation Rules~{\rm\ref{comment-4.5}}, we can calculate the Cantor-Bendixson height of any member of $H(X)$.
\end{it}
\end{theorema}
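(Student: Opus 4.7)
The plan is to construct, by transfinite induction on $\rank_X$, a canonical clopen selector $\mU^H = \{U_K^H : K \in H(X)\}$ on $H(X)$, starting from the given canonical selector $\mU = \{U_x : x \in X\}$ on $X$. By Fact~\ref{Fact-2}, producing such an $\mU^H$ is precisely what is needed to show that $H(X)$ is canonical Skula, and the recursive height formulas emerging from the construction will constitute Computation Rules~\ref{comment-4.5}.

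By Theorem~\ref{thm-2}(4), each $K \in H(X)$ has a canonical decomposition $K = U_{x_1} \cup \cdots \cup U_{x_n}$, where $\{x_1, \ldots, x_n\}$ is the finite antichain of maximal elements of $K$. I would parametrise the construction by this data. The hyperspace $H(U_{x_i})$ sits inside $H(X)$ as the clopen subspace $U_{x_i}^+$ (Theorem~\ref{thm-2}(6)); whenever $x_i$ is not a maximum of $X$, the space $U_{x_i}$ is canonical Skula of strictly smaller rank, and the inductive hypothesis produces a canonical selector on $H(U_{x_i})$. The case where some $x_i$ is itself a maximum of $X$ (in particular the case $K = X$ when $X$ is unitary) is handled directly by the same recipe applied at the top level.

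For general $K$, I would define $U_K^H$ as the set of $L \in H(X)$ with $L \subseteq K$ such that, for each $i$, the trace $L \cap U_{x_i}$ (which lies in $H(U_{x_i}) \cup \{\emptyset\}$) belongs to the canonical neighbourhood $W_i$ of $U_{x_i}$ in $H(U_{x_i})$, extended to $\emptyset$ as an isolated bottom. I would then verify that $U_K^H$ is clopen in $H(X)$, contains $K$, and is unitary with $\lastpt = K$, by proving a recursive formula of the shape
\[
\height_{H(X)}(K) \;=\; \text{(natural sum of } \height_X(x_1), \ldots, \height_X(x_n)\text{)},
\]
with a suitable correction term accounting for overlaps between the $U_{x_i}$'s in $X$; this formula is precisely what Computation Rules~\ref{comment-4.5} express. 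The selector axioms (1)--(3) then follow from the recursive compatibility of the $W_i$'s with the canonical selectors on each $H(U_{x_i})$, so that $\mU^H$ is indeed a canonical clopen selector on $H(X)$ and the conclusion follows from Fact~\ref{Fact-2}.

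The main obstacles are two-fold. First, the $W_i$'s must be chosen coherently across different $K$'s so that the selector axiom $L \in U_K^H \Rightarrow U_L^H \subseteq U_K^H$ holds globally; this can be arranged by fixing, once and for all, a canonical selector on each $H(U_x)$ at the beginning of the induction and using these fixed $W_i$'s whenever $U_{x_i}$ occurs as a maximal factor of some $K$. Second, when the principal ideals $U_{x_i}$ overlap in $X$ (which can happen because $X$ is not assumed tree-like canonical Skula), the naive natural-sum formula for $\height_{H(X)}(K)$ must be adjusted by an inclusion--exclusion-type correction governed by the intersection pattern of the $U_{x_i}$'s; working out this correction and showing that it still yields $K$ as the unique top point of $U_K^H$ is the combinatorial heart of Computation Rules~\ref{comment-4.5}.
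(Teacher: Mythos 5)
There is a genuine gap: the entire quantitative content of the theorem --- the computation of $\height_{H(X)}(K)$ and, with it, the proof that each selector neighbourhood is unitary with top point $K$ --- is asserted rather than proved, and the formula you write down has the wrong shape. The paper's Computation Rules~\ref{comment-4.5} say that for an antichain $\sigma=\Max(K)$ one has $\height_{H(X)}(U_\sigma)=\bigoplus_{x\in\sigma}\height_{H(X)}(U_x)$, where the summands are \emph{not} the heights $\height_X(x_i)$ but their $\omega$-power transforms: $\height_{H(X)}(U_x)=0,1,\omega^{\alpha}$ according as $\height_X(x)=0,1,1+\alpha\ (\geq 2)$ (Theorem~\ref{thm-4.4}). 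In Example~\ref{examplesa} the point $x_4$ with $\height_X(x_4)=10$ contributes $\omega^{9}$, not $10$; a natural sum of the raw heights would give $\omega+34$ instead of $\omega^{\omega+7}+\omega^{9}\cdot2+\omega^{2}+\omega+2$. Establishing the single-ideal computation $\height_{H(X)}(U_x)=\omega^{\alpha}$ is itself a nontrivial induction on $\rank_X(x)$ (the four cases of Theorem~\ref{thm-4.4}), and without it the ``recursive formula'' you invoke cannot be verified, nor can you conclude that $L\subsetneqq K$ implies $\height_{H(X)}(L)<\height_{H(X)}(K)$, which is exactly what makes $K$ the unique top point of its neighbourhood.

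Your second ``main obstacle'' --- an inclusion--exclusion correction for overlapping $U_{x_i}$'s --- is a red herring, and recognizing this is the key idea you are missing. The paper disjointifies, setting $U'_i=U_{x_i}\setminus\bigcup_{j<i}U_{x_j}$, and Lemma~\ref{lemma-4.2} shows that $\height(U'_i)=\height(U_{x_i})$: since $U_{x_i}$ is unitary with end-point $x_i$ and the removed overlap is a finite union of strictly smaller principal ideals, it has strictly smaller height and cannot affect $\height$. Hence $H(U_\sigma)$ is homeomorphic and order-isomorphic to $\bigl(\prod_{x\in\sigma}(H(U'_x)\cup\{\emptyset\})\bigr)\setminus\{\vec\emptyset\}$ via $K\mapsto\seqnn{K\cap U'_x}{x\in\sigma}$, and Telg\`arsky's theorem gives the natural-sum formula \emph{exactly}, with no correction term. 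Finally, your construction of a bespoke selector $\mU^H$ is unnecessary and creates the coherence problems you worry about: the family $\setof{K^+}{K\in H(X)}$ is already known to be a clopen selector (Theorem~\ref{thm-2.5}(4)), so all that remains is to prove each $K^+$ unitary with $\lastpt(K^+)=K$, which follows from the strict monotonicity of $\oplus$ once the height formula is in hand.
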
 

\begin{Example}
\label{examplesa}
\begin{rm}
For instance let $U_\sigma \eqdf \bigcup_{x\in\sigma} U_x$ where  
$\sigma = \{ x_0, x_1, x_2, x_3, x_4, x_5, x_6, x_7 \}$ is an antichain of $X$ satisfying 
\smallskip
\newline
\begin{tabular}{ l l l}
$\height_X(x_0) = 0$
&
\quad$\height_X(x_1) = 1= \height_X(x_2)$ \nonumber
&
\quad$\height_X(x_3) = 2$ \nonumber
\smallskip
\\
$\height_X(x_4) = 10 =\height_X(x_5)$  
&
\quad$\height_X(x_6) = \omega+7$
&
\quad$\height_X(x_7) = 3$. \nonumber
\end{tabular}
\smallskip
\newline 
By Computation Rules~\ref{comment-4.5}, we have:
\smallskip
\newline
\phantom{\hfill$\blacksquare$}
\hfill
$\height_{H(X)}(U_\sigma) 
= \omega^{\omega+7} + \omega^9 \cdot 2 + \omega^2 +\omega + 2$. 
\hfill$\blacksquare$
\end{rm}
\end{Example}

\smallskip

The following result is proved in the same Section~\ref{section-4}:

\begin{theorema}
\label{thm-5}
\begin{it}
For any Skula space, the following are equivalent. 
\begin{itemize}
\item[{\rm(i)}]
$X$ is a tree-like canonical Skula space. 
\item[{\rm(ii)}]
$X$ is a continuous image of some successor ordinal space $\beta+1$.
\end{itemize} 
\end{it}
\end{theorema}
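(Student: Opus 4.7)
\noindent\emph{Proof proposal.}

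\smallskip

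\emph{$(ii)\Rightarrow(i)$:} I first verify that each ordinal space $\beta+1$ is itself tree-like canonical Skula, using the selector
$$ U_\gamma \, = \, \begin{cases} \{\gamma\} & \text{if $\gamma$ is isolated in $\beta+1$,}\\ (\delta_\gamma,\gamma] & \text{if $\gamma$ is a limit ordinal,}\end{cases} $$
where $\delta_\gamma\eqdf\sup\{\delta<\gamma:\height(\delta)\geq\height(\gamma)\}$ (and $U_\gamma=[0,\gamma]$ when the supremum is vacuous). A short calculation shows each $U_\gamma$ is clopen in $\beta+1$ and unitary with $\lastpt(U_\gamma)=\gamma$, and a case analysis on $\height(\gamma)$ versus $\height(\gamma')$ establishes tree-likeness of $\mU=\{U_\gamma:\gamma\leq\beta\}$. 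Given a continuous surjection $f:\beta+1\to X$, I transport the structure: for each $x\in X$ I select a clopen $W_x\subseteq\beta+1$, built from $\mU$-members by finitely many unions and tree-differences of the form $U_\gamma\setminus\bigcup_{j\in J}U_{\gamma_j}$, so that $V_x\eqdf f(W_x)$ is clopen in $X$ and unitary with top $x$; tree-likeness of $\mU$ then propagates to $\{V_x:x\in X\}$. A naive choice such as $W_x=U_{\min f^{-1}(x)}$ can fail to give a clopen $V_x$, so this enriched selection is essential.

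\smallskip

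\emph{$(i)\Rightarrow(ii)$:} By transfinite induction on $\alpha\eqdf\rank(X)=\height(X)$ (equal by Fact~\ref{Fact-2}). The base $\alpha=0$ is trivial, since $X$ is then finite. For the inductive step, first reduce to the unitary case: by compactness the $\subseteq$-maximal elements of $\mU$ form a finite antichain $\{U_{y_1},\dots,U_{y_n}\}$ that partitions $X$ (by tree-likeness), and concatenation of continuous surjections onto each piece yields the desired map. Assume then that $X=U_r$ is unitary with unique top $r$, and write $X\setminus\{r\}=\bigsqcup_{i\in I}D_i$, where $\{D_i\}$ are the $U_{y_i}$ for immediate $\mU$-predecessors $y_i$ of $r$ (if any exist), or else (the \emph{limit} subcase) the clopen pieces $D_i\eqdf U_{z_{i+1}}\setminus U_{z_i}$ from a cofinal chain $U_{z_0}\subsetneq U_{z_1}\subsetneq\cdots$ in $\{U_y:y<r\}$ whose union is $X\setminus\{r\}$. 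In either case each $D_i$ is clopen in $X$, unitary, and tree-like canonical Skula of rank strictly less than $\alpha$ (its top $t_i$ satisfies $t_i<r$ in the poset, hence $\height_X(t_i)<\height_X(r)$). By induction, $D_i=g_i(\beta_i+1)$ for some $\beta_i$. Fixing a well-ordering of $I$, setting $\beta\eqdf\sum_{i\in I}(\beta_i+1)$, and defining $f:\beta+1\to X$ block-by-block via the $g_i$'s while sending $\beta$ (and each limit block-boundary) to $r$ produces the required continuous surjection. Continuity at $\beta$ follows because every clopen neighborhood $W$ of $r$ has $X\setminus W$ compact inside $\bigsqcup D_i$, hence covered by finitely many $D_i$, leaving all sufficiently late blocks inside $W$.

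\smallskip

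The main obstacle will be the limit subcase of $(i)\Rightarrow(ii)$: extracting the cofinal chain $(U_{z_i})_i$ in $\{U_y:y<r\}$ with union $X\setminus\{r\}$, and verifying that each $D_i$ inherits a tree-like canonical Skula structure of strictly smaller rank (using the observation that no $\mU$-member lies strictly between consecutive $U_{z_i}$). The parallel subtlety in $(ii)\Rightarrow(i)$ is the delicate choice of clopen $W_x$ whose image $V_x=f(W_x)$ is clopen unitary with top $x$: simple choices such as $\min$ or $\max$ preimage fail, as the example $f:\omega+1\to\omega+1$ with $f(0)=\omega$ and $f(n)=n-1$ for $n\geq 1$ illustrates (the naive $V_\omega=f(U_0)=\{\omega\}$ is not open in $\omega+1$).
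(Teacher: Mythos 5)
There is a genuine gap in your proof of $(i)\Rightarrow(ii)$: the decomposition on which the induction rests does not exist in general. In the unitary case you write $X\setminus\{r\}=\bigsqcup_{i\in I}D_i$ with each $D_i$ clopen in $X$, obtained either from the immediate $\mU$-predecessors of $r$ or from a countable cofinal chain $U_{z_0}\subsetneq U_{z_1}\subsetneq\cdots$ with union $X\setminus\{r\}$. Take $X=\omega_1+1$, a successor ordinal (so it satisfies (ii) and is tree-like canonical Skula), which is unitary with $r=\omega_1$. No countable chain of compact subsets has union $[0,\omega_1)$, and in fact $[0,\omega_1)$ admits \emph{no} partition into compact open subsets of $X$ whatsoever: if $\{K_i\}_{i\in I}$ were one, write $i(\alpha)$ for the index with $\alpha\in K_{i(\alpha)}$ and note that each $\sup K_i$ is countable; the set $C$ of $\alpha$ such that $\sup K_{i(\beta)}<\alpha$ for all $\beta<\alpha$ is a club, and any limit $\alpha\in C$ lies in the open set $K_{i(\alpha)}\supseteq(\gamma,\alpha]$ for some $\gamma<\alpha$, so choosing $\beta\in(\gamma,\alpha)$ gives $i(\beta)=i(\alpha)$ and $\sup K_{i(\beta)}\geq\alpha$, a contradiction. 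Thus neither branch of your dichotomy applies, and no variant that insists on clopen pieces of smaller rank can be made to work; a correct induction would have to allow a transfinite increasing union with identifications at limit stages, which is in effect the linearization the paper carries out. (The paper's proof of this direction instead well-orders $X$ so that each $U_x$ becomes a half-open interval, preserving inclusion and disjointness --- Pouzet's interval representation of a family of pairwise comparable-or-disjoint sets --- and then applies Sikorski's extension theorem to embed $\Clop(X)$ into the interval algebra of that well-ordering.)

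The direction $(ii)\Rightarrow(i)$ is also incomplete as written. You correctly observe that naive choices of $W_x$ fail, but the sentence ``I select a clopen $W_x$ \dots\ so that $V_x\eqdf f(W_x)$ is clopen in $X$ and unitary with top $x$; tree-likeness of $\mU$ then propagates'' is exactly the assertion that needs proof, and no selection rule is given; note moreover that $f(W_x)$ is closed but in general not open, so one should work with $\Clop(X)$ pulled back into $\Clop(\beta+1)$ rather than with images. The paper's construction is concrete: among all clopen $U\subseteq X$ with $\Endpt(U)=\{x\}$ it takes those whose preimage is a union of the minimal number $m[x]$ of half-open intervals of $\beta+1$, and then the lexicographically least such interval representation; verifying that this family is a tree-like canonical selector is the real content of this implication.
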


In \S\ref{section-4.1} we present some examples, 
namely spaces of initial subsets of a poset,  Mr\'owka spaces (defined by almost disjoint families), Lusin families, and ladder sequences.

The first examples of Skula spaces come from posets: 
let $P$ be a partial ordering. 
We denote by $\IS(P)$ the set of all initial subsets of $P$ (so $\emptyset, P \in \IS(P)$). 
Since $\IS(P) \subseteq \{0,1\}^P$, we endow $\IS(P)$ with the pointwise topology. 
We claim that 
\begin{itemize}
\item[]
$\IS(P)$ is a Priestley space in the pointwise topology $\calTT_p$\,.  
\end{itemize}
Indeed, obviously $\IS(P)$ compact. 
Next let $x, y \in \IS(P)$ be such that $x \not\subseteq y$. 
Pick $t \in x \setminus y \subseteq P$. 
Then $V_t \eqdf \setof{z \in \IS(P) }{ t \not\in z }$ is a clopen initial subset of $\IS(P)$ such that $y \in V_t$ and $x \not\in V_t$. 

In Section~{\rm\ref{section-4.1}} we characterize the posets $P$ such that $\IS(P)$ are Skula 
(such $P$'s are narrow and order-scattered {\rm\cite[Theorem 1.3]{ABKR}}\,). 
We give examples of posets $P$ such that $\IS(P)$ are Skula and are (or are not) tree-like canonically  Skula. 

Next recall that a family $\mA$ of infinite subsets of a set $S$ is called \dfn{almost disjoint} if $A\cap B$ is finite for any distinct sets $A,B\in\mA$. 
To eliminate trivialities, we assume that $\mA$ is infinite and that $\bigcup \mA = S$. 
A typical example of such a family $\mA$ is the uncountable set of branches of an ever-branching tree of height~$\omega$. A tree is \emph{ever-branching} if above every element one can find at least two incomparable elements.

A \dfn{Mr\'owka space}, also called a \dfn{$\Psi$-space}, is a unitary space of height 2. 
We describe a Mr\'owka space as follows.
Given an infinite almost disjoint family $\mA$ of subsets of a fixed set $S$, we define the compact space 
$K_\mA = S \cup \mA \cup \{ \infty \}$ as follows: 
all points of $S$ are isolated, a basic neighborhood of $A \in \mA$ is of the form $V_{A,F} := \{ A \} \cup (A {\setminus} F)$ with $F \subseteq A$ finite, and a basic neighborhood of $\infty$ is of the form $V_{\mS} \eqdf K_\mA \setminus \bigcup_{A \in \mS} V_{A,\emptyset}$ where $\mS$ is a finite subset of $\mA$.
In other words, $K_\mA$ is the space of ultrafilters of the Boolean subalgebra $B_{\mA}$ of $\powerset(S)$ generated by 
$\mA \cup \setof{ \{ x \} }{ x \in S} $ 
which is well-known under the name of \dfn{almost disjoint algebra}  over $\mA$. 

Spaces of the form $K_\mA$ appear often in the literature \cite{Mrowka},  although
actually they were introduced by Alexandrov and Urysohn, and they are well-known in Set-Theoretic Topology: see \cite[Ch.~3, \S11]{HBSTT}. 
Remark that $\SETOF{ \{ x \} }{x \in S } \cup \mA \cup \{ S \}$ is a canonical clopen selector for $K_{\mA}$. 
Note that
\begin{it} 
\begin{itemize}
\item[]
$K_{\mA}$ is a canonical Mr\'owka space 
and, by Theorem~{\rm\ref{thm-4.4}(3)}, $\height(H(K_{\mA})) = \omega$. 
\newline
Therefore $H(K_{\mA})$  is far from being a Mr\'owka space 
(because $\height(H(K_{\mA})) \geq 3$).
\\
However, by Theorem~{\rm\ref{thm-4}} and Fact~{\rm\ref{Fact-2}}, $H(K_{\mA})$ has a structure of canonically Skula space. 
\end{itemize} 
\end{it}
On the other hand, a Mr\'owka space $K_{\mA}$ is embeddable in a Mr\'owka space $G(K_\mA)$ with a continuous join operation (Theorem~\ref{thm-4.10}).
In particular $G(K_\mA)$ is unitary, of height~2, and thus of the form $K_{\mA^\star}$ (where $\mA^\star$ is an almost disjoint family). 
The statement above can be applied to ``Lusin families'' as constructed in \cite[Ch.~3, Theorem~4.1]{HBSTT}, and to ``ladder system'' as defined in Abraham and Shelah \cite{AS2}.  

\begin{it} 
\begin{itemize}
\item[]
There is a Lusin  family $\mL$ such that $K_{\mL}$ has a structure of continuous join operation.
\\
There is a ladder system $\mL$ such that $K_{\mL}$ has a structure of continuous join operation. 
\end{itemize}
\end{it}

\section{Priestley spaces and Vietoris hyperspaces}
\label{section-2} 

In this section we show Theorem~\ref{thm-1}, that we recall under the following form.

\begin{theorem}
\label{thm-2.1}
For a compact Hausdorff space $X$ the following conditions are equivalent. 
\begin{itemize} 
\item[{\rm(i)}]
$X$ is a Skula space. 
\item[{\rm(ii)}]
$X$ admits a partial order with clopen principal ideals. 
\item[{\rm(iii)}]
$X$ admits a closed partial order (as a subset of $X {\times} X$) with open principal ideals. 
\item[{\rm(iv)}]
The Boolean algebra $\Clop(X)$ of clopen subsets of $X$ is generated by a well-founded sublattice $\mW$. 
\item[{\rm(v)}]
The Boolean algebra $\Clop(X)$ of clopen subsets of $X$ is generated by a clopen selector. 
\item[{\rm(vi)}]
$X$ embeds into a compact semilattice $H(X)$ with clopen  principal ideals and the join operation on $H(X)$ is continuous. 
\end{itemize} 
\end{theorem}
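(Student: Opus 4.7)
The plan is to organise the six conditions around the central notion of a clopen selector and then establish (iii) and (vi) as new characterisations. The equivalence (i)~$\Leftrightarrow$~(ii)~$\Leftrightarrow$~(iv)~$\Leftrightarrow$~(v) is essentially known from \cite[Proposition~2.15]{BR1} and \cite[Theorems~1 and 3]{DW}: a clopen selector $\mU = \{U_x : x \in X\}$ is precisely the family of principal ideals $\{\downarrow x\}$ of its induced order $\leq^{\mU}$ (so (ii)~$\Leftrightarrow$~(v) is definitional), the Skula topology on $X$ is generated over the topology of initial clopen sets by taking differences, and the sublattice $\mW$ of $\Clop(X)$ generated by $\mU$ is a well-founded generating sublattice; conversely, a $\subseteq$-minimal element of $\{W \in \mW : x \in W\}$ defines $U_x$ and yields a clopen selector.

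To add (iii), I show (ii)~$\Rightarrow$~(iii): given a partial order $\leq$ on $X$ with clopen principal ideals and $(x_0,y_0) \notin {\leq}$, transitivity gives $y \in \downarrow y_0 \Rightarrow \downarrow y \subseteq \downarrow y_0 \Rightarrow x_0 \notin \downarrow y$, so $(X \setminus \downarrow y_0) \times \downarrow y_0$ is an open neighbourhood of $(x_0,y_0)$ disjoint from ${\leq}$; hence ${\leq}$ is closed in $X \times X$. The converse (iii)~$\Rightarrow$~(ii) is immediate because, when ${\leq}$ is closed, $\downarrow y = \{x : (x,y) \in {\leq}\}$ is the pre-image of the closed set ${\leq}$ under the continuous map $x \mapsto (x,y)$, hence closed; combined with being open by hypothesis, it is clopen.

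Finally, (iii)~$\Rightarrow$~(vi) is the substantive step, accomplished through Theorem~\ref{thm-2}: under (iii), $X$ is a Priestley space, so its Vietoris hyperspace $H(X)$ of nonempty closed initial subsets, by Theorem~\ref{thm-2}(1)--(3), carries a compact topological semilattice structure with continuous join $F \join G := F \cup G$ and topological embedding $\eta(x) = \downarrow x$. Since $X$ is Skula by~(i), Theorem~\ref{thm-2}(4)--(6) further produces the clopen principal ideals $K^+ = \{G \in H(X) : G \subseteq K\}$ in $H(X)$, yielding~(vi). The reverse (vi)~$\Rightarrow$~(ii) pulls the order back along $\eta$: the principal ideal of $x$ in $X$ is the pre-image of the clopen principal ideal of $\eta(x)$ in $H(X)$, and is clopen because $\eta$ is a topological embedding. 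The main obstacle in the whole scheme is Theorem~\ref{thm-2}(4) --- that every closed initial subset of a Skula space is a finite union of clopen principal ideals --- which combines compactness with the well-foundedness from Fact~\ref{Fact-1}(1) and is exactly what ensures each $K \in H(X)$, and hence $K^+$, is clopen in $H(X)$; once it is in hand, the remaining assertions of Theorem~\ref{thm-2} and therefore the conclusion of (iii)~$\Rightarrow$~(vi) follow by routine unpacking of the definition of the Vietoris topology.
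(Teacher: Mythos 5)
Your proposal is correct and follows essentially the same route as the paper: it quotes the known equivalences (i)$\Leftrightarrow$(ii)$\Leftrightarrow$(iv)$\Leftrightarrow$(v) from \cite{BR1} and \cite{DW}, proves (ii)$\Leftrightarrow$(iii) by exhibiting an open rectangle missing ${\leq}$ (you use $(X\setminus{\downarrow}y_0)\times{\downarrow}y_0$ where the paper uses $({\downarrow}x_0\setminus{\downarrow}y_0)\times{\downarrow}y_0$, an immaterial difference), and obtains (iii)$\Rightarrow$(vi) from the hyperspace results of Theorem~\ref{thm-2}/Theorem~\ref{thm-2.5}, correctly identifying Theorem~\ref{thm-2}(4) as the substantive ingredient. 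The only structural variation is that you close the cycle with (vi)$\Rightarrow$(ii) by pulling back clopen principal ideals along the embedding, whereas the paper closes it with (vi)$\Rightarrow$(iii) via closedness of the order on $H(X)$; both are equally routine.
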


\begin{proof}
The equivalence (i)$\Leftrightarrow$(v) was proved 
by Dow and Watson in \cite[Theorem 3]{DW},  
and (i)$\Leftrightarrow$(iv)$\Leftrightarrow$(v)$\Leftrightarrow$(i) by Bonnet and Rubin \cite[Lemma 2.8 and Proposition 2.15]{BR1}.

To see that (ii)$\Ra$(v), set 
$U_x = \dd x \eqdf \setof{ y \in X }{ y \leq x }$ for $x\in X$, 
and observe that 
$\mU \eqdf \setof{ U_x }{ x\in X }$ is a clopen selector.
The fact that the Boolean algebra $\Clop(X)$
is generated by the family $\mU$
is proved in \cite[Proposition 2.15(b)]{BR1}.

To see that (v)$\Ra$(ii), fix a clopen selector $\{U_x:x\in X\}$ and observe that it induces a partial order $\le$ with clopen principal ideals defined by $x \leq y$ iff $U_x \subseteq U_y$.

To see that (iii)$\Ra$(ii), observe that for $x \in X$, since $\leq$ is closed in $X {\times} X$, the set $\dd x \eqdf \setof{ y \in Y }{ y \leq x }$ is closed in $X$.

To see that (ii)$\Ra$(iii), it suffices to prove that each partial order $\le$ on $X$ with clopen principal ideals is closed in $X {\times} X$.
Let $\pair{x}{y}\in (X {\times} X) \, \setminus  \le$. 
Then $W \eqdf \dd y$ (that does not contains $x$) and $V \eqdf  \dd x \setminus \dd y$ are open in $X$, and thus $V \times W$ 
is a neighborhood of $\pair{x}{y}$, disjoint from the partial order relation $\leq$.

The implication (vi)$\Ra$(iii) is trivial, because the partial order $\leq$ on $H(X)$ is closed. 

The final implication (iii)$\Ra$(vi) follows from Theorem~\ref{thm-2.5}.
\end{proof}

To prove the part (iii)$\Ra$(vi), 
we use Priestley spaces and their hyperspaces. 

\smallskip 

A significant part of this section is devoted to the proof of Theorem~\ref{thm-2.1}(iii)$\Ra$(vi). 

By a \dfn{pospace (partially ordered space)}
we understand a topological space with a closed partial order $\leq$ (for the development of this concept, see for instance L. Nachbin \cite[Ch. 1]{Nachbin}). 

Note that any Priestley space, and thus any Skula space, is a partially ordered space.

A. Stralka \cite{Str} showed that there is a $0$-dimensional (non scattered) compact pospace which is not a Priestley space.
On the other hand, G. Bezhanishvili, R. Mines and P. Morandi \cite[Corollary 3.9]{BMM} proved that for any scattered and compact space:  the space is Priestley if and only if the partial order relation is closed.

In this work we assume that {\em every Priestley and thus every Skula space is compact and Hausdorff}.

\begin{fact}
\label{fact-2.2}
If $X$ is a Priestley space, then the space $H(X)$ is a Priestley space.
\end{fact}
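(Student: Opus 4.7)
The plan is to verify the three defining properties of a Priestley space for $(H(X), \subseteq)$: compactness, Hausdorffness, and the Priestley separation axiom. Hausdorffness will drop out of separation once that is established, so the genuine content lies in proving separation and compactness.

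For the Priestley separation, I would take $F, G \in H(X)$ with $F \not\subseteq G$ and pick $x \in F \setminus G$. Since $G$ is initial and $x \notin G$, one has $x \not\leq g$ for every $g \in G$. Applying the Priestley separation of $X$ to each pair $(x,g)$, I obtain a clopen final $V_g \subseteq X$ containing $x$ and missing $g$. The clopen initial complements $X \setminus V_g$ cover the compact set $G$, so a finite subfamily suffices; intersecting the corresponding $V_g$'s yields a single clopen final $V \subseteq X$ with $x \in V$ and $V \cap G = \emptyset$. The subbasic set $V^-$ is $\subseteq$-upward closed in $H(X)$, it is clopen because its complement equals $(X \setminus V)^+$ which is also a subbasic open, and it contains $F$ while missing $G$. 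This is the Priestley separation, and taking $V^-$ together with its complement also gives the Hausdorff separation of $F$ and $G$.

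For compactness I would invoke the Alexander subbase theorem on the generating subbase $\{U^+ : U \text{ clopen initial}\} \cup \{V^- : V \text{ clopen final}\}$. Suppose, toward a contradiction, that a family $\{U_\alpha^+\}_{\alpha \in A} \cup \{V_\beta^-\}_{\beta \in B}$ of subbasic opens covers $H(X)$ without a finite subcover. Set $W := \bigcap_\beta (X \setminus V_\beta)$, which is closed and initial. The failure of finite subcover, rewritten in terms of $X$, says that for each finite $A_0 \subseteq A$ and $B_0 \subseteq B$ there exists a closed initial $K \subseteq \bigcap_{\beta \in B_0}(X \setminus V_\beta)$ meeting each $X \setminus U_\alpha$ with $\alpha \in A_0$. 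Using such witnesses, compactness of $X$ yields a point $x_\alpha \in W \setminus U_\alpha$ for every $\alpha$ (the degenerate case $A = \emptyset$ is handled by picking any $x \in W$). Let $C := \overline{\{x_\alpha : \alpha \in A\}}$ and $K := \dd C$. Since $\leq$ is closed in $X \times X$ and $C$ is compact, $K$ is closed; it is initial by construction; and $C \subseteq W$ together with the initiality of $W$ yields $K \subseteq W$, so $K \cap V_\beta = \emptyset$ for every $\beta$. Each $x_\alpha \in K \setminus U_\alpha$ shows $K \not\subseteq U_\alpha$, so $K \in H(X)$ lies outside every member of the cover, a contradiction.

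The main obstacle is the compactness step, and within it the fabrication of a single element $K \in H(X)$ that eludes every subbasic open in the purported cover. Building $K$ as the downward closure of a cleverly chosen closed subset of $X$ leverages the Priestley character of $X$ in two essential ways: the closedness of $\leq$ ensures $\dd C$ is closed whenever $C$ is compact, and compactness of $X$ promotes the finite intersection property read off from the failure of a finite subcover into a genuine nonempty intersection.
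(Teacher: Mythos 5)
Your proof is correct and takes essentially the same route as the paper's: the separation axiom via a finite subcover of one of the two sets by clopen initial/final subsets of $X$, and compactness via the Alexander subbase lemma with the witness obtained by intersecting the subbasic constraints of one type and using compactness of $X$ to verify the constraints of the other type. The only divergence is cosmetic: passing to $\dd C$ at the end is an unnecessary detour, since $W=\bigcap_{\beta}(X\setminus V_\beta)$ is itself already a nonempty closed initial set that meets every $X\setminus U_\alpha$ and misses every $V_\beta$, which would spare you the (true but here unproved) facts that $\leq$ is closed in $X\times X$ and that the down-closure of a compact set is closed.
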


\begin{proof} 
We denote by $\mV_X$ the set of clopen initial subsets of $X$. 
Since $X$ is Priestley, $\mS^0_X \eqdf \setof{U^+ }{U \in \mV_X }$ separates the points of $H(X)$ and thus $\pair{ H(X) }{ \calTT }$ is Hausdorff and $0$-dimensional.

We show that $H(X)$ is compact. 
Let $\mV_0 , \, \mV_1 \subseteq \mV_X$. 
We set $\mU_0 \eqdf \setof{U^+ }{U \in \mV_0 }$ and  
$\mU_1 \eqdf \setof{H(X) \setminus U^+ }{U \in \mV_1 }$. 
Assume that $\mU \eqdf \mU_0 \cup \mU_1$ is a centered family.

The set $A \eqdf \bigcap \mV_0$  is a closed initial subset in $X$.  
Moreover, for every finite subset $\mW$ of $\mV_0$ we have 
$\bigl( \bigcap \mW \bigr)^+ 
= \bigcap \setof{ V^+ }{V \in \mW} \neq \emptyset $. 
Since $V^+\neq\emptyset$ iff $V \neq\emptyset$, by the compactnes of $X$, $A \neq \emptyset$, and thus $A \in H(X)$.

We prove that $A \in \bigcap  \mU$.
By definition, $A \in \bigcap \setof{ V^+ }{V \in \mV_0} $. 
Fix $W \in \mV_1$ and suppose, by contradiction,
that $A \not\in H(X) \setminus W^+$.
The fact that $A \in W^+$ implies $ \bigcap \mV_0 \eqdf A \subseteq W$ and thus 
$\bigl( \bigcap \mV_0 \bigr) \setminus W = \emptyset$. 
Since $W$ is clopen and $X$ is compact,
there are $V_1,\dots,V_k \in \mV_0$ such that 
$V_1 \cap \cdots \cap V_k \subseteq W$.
This implies that
$V_1^+ \cap \cdots \cap V_k^+ \cap (H(X) \setminus W^+) = \emptyset$,
contradicting the fact that $\mU$ is centered.
It follows that $A \in H(X) \setminus W^+$ for every $W \in \mV_1$.
Finally, $A \in \bigcap \mU$. 
We have proved that $H(X)$ is compact. 

Next we check the Priestley separation property of $H(X)$.
Let $F, G \in H(X)$ be such that $G \not\subseteq F$. 
We fix $x \in G \setminus F$. 
For each $y \in F$, we have $y \not\geq x$, 
and thus there is a clopen initial subset $V_{y}$ of $X$ such that $y \in V_{y}$ and $x \not\in V_{y}$. 
By compactness, there is a finite subset $\sigma$ of $F$ such that $F \subseteq V \eqdf \bigcup_{y \in \sigma} V_{y} $ and $x \not\in V$, i.e. $F \not\subseteq V$.
Therefore $F \in U^+$ and $G \not\in U^+$. 

We have proved that $H(X)$ is Priestley.
\end{proof}

The reader might think that the compactness of $H(X)$ should follow from the compactness of the Vietoris hyperspace $\exp(X)$, which is a well known classical fact. Unfortunately, as we have already announced, $H(X)$ might not be a closed subspace of $\exp(X)$, see Example~\ref{prop-5.2} below.

Let $X$ be a Priestley space.
For $x \in X$ and  $A, B \in H(X)$ we set
$$\eta(x) \eqdf \dd x  
\quad \text{ and } \quad A \join B \eqdf A \cup B \, .
$$
So $\eta$ is a map from $X$ into $H(X)$ and $\join$ is a join semilattice operation on $H(X)$.  

\begin{theorem}
\label{thm-2.3}
Let $X$ be a Priestley space. 
Then

\begin{itemize} 
\item[{\rm(1)}]
$H(X)$ is a Priestley space. 

\item[{\rm(2)}]
The map $\,\fnn {\join} {H(X) \times H(X) }{ H(X) }$ is continuous.

\item[{\rm(3)}]
The map $\,\fnn { \eta } {X} { H(X) }$ is continuous and $x \leq y$ if and only if $\eta(x) \leq \eta(y)$.

\item[{\rm(4)}]
The join semilattice $\eta[X]^{\join}$ generated by
$\eta[X]$ in $H(X)$ is topologically dense in $H(X)$. 
\end{itemize}
\end{theorem}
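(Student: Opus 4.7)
The plan is to handle the four claims in the stated order, invoking Fact~\ref{fact-2.2} for (1) and then working directly from the subbase $\{U^+ : U \text{ clopen initial}\} \cup \{V^- : V \text{ clopen final}\}$ of $H(X)$ for (2)--(4). Part (1) is precisely Fact~\ref{fact-2.2}, so nothing new is required there.

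For (2), I would verify continuity of $\join$ on subbasic opens. For a clopen initial $U$, one has $\join^{-1}(U^+) = U^+ \times U^+$, because $A \cup B \subseteq U$ iff $A \subseteq U$ and $B \subseteq U$. For a clopen final $V$, one has $\join^{-1}(V^-) = (V^- \times H(X)) \cup (H(X) \times V^-)$, since $(A \cup B) \cap V \neq \emptyset$ iff at least one of $A, B$ meets $V$. Both preimages are open, so $\join$ is continuous.

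For (3), I would similarly compute $\eta^{-1}$ of the subbasic opens. Because $U$ is initial, $\dd x \subseteq U$ iff $x \in U$, so $\eta^{-1}(U^+) = U$. Because $V$ is final, $\dd x \cap V \neq \emptyset$ iff $x \in V$ (the nontrivial direction uses that $y \leq x$ with $y \in V$ forces $x \in V$ by finality), so $\eta^{-1}(V^-) = V$. Hence $\eta$ is continuous. The order-equivalence $x \leq y \iff \dd x \subseteq \dd y$ is immediate: one direction is monotonicity of $\dd(\cdot)$, the other follows from $x \in \dd x \subseteq \dd y$. Thus $\eta$ is injective and order-preserving, and since $X$ is compact and $H(X)$ is Hausdorff by (1), $\eta$ is automatically a topological embedding.

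For (4), I would show that every basic neighborhood of every $F \in H(X)$ meets $\eta[X]^{\join}$. Such a neighborhood has the form $\bigcap_{i=1}^{k} U_i^+ \cap \bigcap_{j=1}^{m} V_j^-$ with $F \subseteq U_i$ for each $i$ and $F \cap V_j \neq \emptyset$ for each $j$. Choose $x_j \in F \cap V_j$ for each $j$ (and any single $x \in F$ if $m = 0$), and set $G = \dd x_1 \cup \cdots \cup \dd x_m \in \eta[X]^{\join}$. Since $F$ is initial and each $x_j \in F$, we have $G \subseteq F \subseteq U_i$ for every $i$, while $x_j \in G \cap V_j$ for every $j$; hence $G$ lies in the given neighborhood. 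No serious obstacle appears: all the verifications are routine, and the key structural point to keep in mind is that the subbase of $H(X)$ uses only clopen \emph{initial} or \emph{final} subsets of $X$, which is exactly what makes $\eta^{-1}(U^+)$ and $\eta^{-1}(V^-)$ collapse back to $U$ and $V$ themselves.
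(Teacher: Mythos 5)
Your proposal is correct and follows essentially the same route as the paper: part (1) via Fact~\ref{fact-2.2}, parts (2)--(3) by computing preimages of the subbasic sets $U^+$ and $V^-$, and part (4) by picking points $x_j \in F \cap V_j$ and taking the union of their principal ideals, which stays inside $F$ because $F$ is initial. The only (harmless) differences are that you verify both kinds of subbasic sets explicitly where the paper treats only the $U^+$'s, and in (4) you work directly with the given $F$ rather than with auxiliary witnesses $F_i$ as the paper does.
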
 

\begin{proof}
(1)
is proved in Fact~\ref{fact-2.2}. 

(2)
It is enough to notice that $F \cup G \in U^+$ if and only if $F \in U^+$ and $G \in U^+$. 

(3)
It is obvious that $\eta$ is an order-isomorphism.
To see that $\eta$ is continuous, it is enough to notice that $\inv \eta {U^+} = U$
for any clopen initial subset $U \subseteq X$.  

(4)
Recall that a nonempty basic open set in $H(X)$ is of the form
$V \eqdf U^+ \cap \bigcap_{i<n} ( H(X) \setminus W_i^+ )$
where $U, W_0, \ldots, W_{n-1}$ are clopen initial subsets in $X$.

For every $i<n$,  choose
$F_i \in U^+ \cap ( H(X) \setminus W_i^+)$. 
Since $F_i \cap (X \setminus W_i) \neq \emptyset$,  
pick $x_i \in F_i$ such that $x_i \in X \setminus W_i$. 
So $\eta(x_i) \eqdf \dd x_i \in U^+ \cap ( H(X) \setminus W_i^+ )$. 
Setting $\sigma = \setof{ x_i }{ i<n }$ 
we have $\dd \sigma \eqdf \bigcup_{i<n} \dd x_i \eqdf \Join_{i<n} \dd x_i \in L$ and $\dd \sigma \in V$.
We have proved that $\eta[X]^{\join}$ is topologically dense in $H(X)$.
\end{proof} 

We complete the above result on Priestley spaces by the following universal property. 
Recall that if $\pair Z \join$ is a compact $0$-dimensional join semilattice then $\Join B \eqdf \sup(B)$ exists for every nonempty subset $B$  of $Z$.

\begin{proposition}
\label{prop-2.4}
\begin{it}
Let $X$ and $\fnn{ \eta }{ X }{ H(X) }$ be as in 
Theorem~{\rm\ref{thm-2.3}}. 
Then for any $0$-dimensional compact join semilattice $Y$ and any increasing continuous map $\fnn{ f }{X }{ Y }$ there is a unique continuous join-semilattice homomorphism $\fnn{ \hat{f} }{H(X) }{ Y }$  such that $\hat{f} \circ \eta = f$. 
\end{it}
\end{proposition}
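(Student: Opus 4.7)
The plan is to define $\hat{f} \colon H(X) \to Y$ by
$$
\hat{f}(F) \eqdf \Join_{x \in F} f(x),
$$
the supremum computed in $Y$. Well-definedness uses the fact that in a compact Hausdorff topological join-semilattice every nonempty subset admits a supremum: the directed net of finite subjoins converges by compactness, and its limit equals the sup by closedness of $\leq$. The algebraic identities $\hat f \circ \eta = f$ (since $f$ is increasing, $f(x)$ is the maximum of $f[\dd x]$) and $\hat f(F \cup G) = \hat f(F) \join \hat f(G)$ (associativity) are then immediate.

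Uniqueness follows from density: any continuous join-homomorphism extending $f$ must take the value $\Join_i f(x_i)$ on each finite join $\bigcup_i \dd x_i \in \eta[X]^\join$, hence must agree with $\hat f$ on the dense sub-semilattice $\eta[X]^\join$ of $H(X)$ (Theorem~\ref{thm-2.3}(4)); continuity together with Hausdorffness of $Y$ then forces agreement on all of $H(X)$.

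The main obstacle is proving continuity of $\hat f$. I propose to show that the graph of $\hat f$ is closed in the compact Hausdorff product $H(X) \times Y$, which suffices. Take a net $(F_\alpha, \hat f(F_\alpha)) \to (F, z)$; the goal is $z = \hat f(F)$. For $z \leq \hat f(F)$, use Priestley separation in $X$: $F$ is the intersection of its clopen initial neighborhoods, and for each such $U \supseteq F$ eventually $F_\alpha \subseteq U$, so $\hat f(F_\alpha) \leq \Join f[U]$; passing to the limit via closedness of $\leq$ on $Y$ gives $z \leq \Join f[U]$. An exchange-of-limits argument along the downward-directed filter of such $U$'s, using continuity of $f$ and closedness of the order on $Y$, then yields $z \leq \Join f[F] = \hat f(F)$. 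For the reverse inequality $\hat f(F) \leq z$, it suffices to prove $f(x) \leq z$ for each $x \in F$. Fix such $x$. By Priestley separation, $\uparrow x = \bigcap \{V : V \text{ clopen final in } X, \ x \in V\}$, and by continuity of $f$, for every open neighborhood $W$ of the compact set $f[\uparrow x]$ in $Y$ there is a clopen final $V \supseteq \uparrow x$ with $f[V] \subseteq W$. For such $V$, $F \cap V \neq \emptyset$ (it contains $x$), so $F_\alpha \cap V \neq \emptyset$ eventually; picking $x_\alpha \in F_\alpha \cap V$ gives $f(x_\alpha) \in W$ and $f(x_\alpha) \leq \hat f(F_\alpha) \to z$. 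Passing to a convergent subnet and letting $W$ shrink onto $f[\uparrow x]$ produces $w \in f[\uparrow x]$ with $w \leq z$; writing $w = f(y)$ for some $y \geq x$, monotonicity of $f$ yields $f(x) \leq f(y) = w \leq z$.

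The subtlest step is the final diagonal-type extraction, combining the Priestley approximation $\uparrow x = \bigcap V$ with the clopen-Vietoris convergence $F_\alpha \to F$ and compactness of $Y$ to produce the required $y \in \uparrow x$ with $f(y) \leq z$; this is where the proof requires the most care, and it is the reason one cannot simply transfer a generic Vietoris-hyperspace argument.
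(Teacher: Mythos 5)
Your construction of $\hat f$ as $F \mapsto \sup \img f F$, the algebraic identities, and the uniqueness argument via density of $\eta[X]^{\join}$ all match the paper (which in fact leaves uniqueness implicit, so that part is a welcome addition). Your continuity argument, however, takes a genuinely different route --- a closed-graph/net argument in place of the paper's computation of preimages of subbasic clopen sets of $Y$ --- and it contains a real gap in the direction $z \leq \hat f(F)$. From ``eventually $F_\alpha \subseteq U$'' you correctly obtain $z \leq \sup \img f U$ for every clopen initial $U \supseteq F$, but the promised ``exchange-of-limits argument \dots using continuity of $f$ and closedness of the order on $Y$'' does not deliver $z \leq \sup \img f F$. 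The decreasing net $( \sup \img f U )_U$ does converge to some $w$ with $z \leq w$ and $w \geq \sup \img f F$, but the missing inequality $w \leq \sup \img f F$ is precisely the assertion that the supremum operation is continuous along compacta shrinking onto $\img f F$, and that is the point where compact topological semilattices can fail: it requires $Y$ to have small subsemilattices (equivalently, enough clopen ideals), which is false for general compact semilattices (Lawson's example) and is guaranteed here only by the hypothesis that $Y$ is $0$-dimensional. Tellingly, your proof never invokes that hypothesis.

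The paper closes exactly this gap with its Claim: in a compact $0$-dimensional topological join-semilattice every nonempty clopen initial subset $W$ is a finite union of clopen principal ideals $\dd c$, each clopen because $\dd c = f_c^{-1}[W]$ for $f_c(t)=t\join c$ with $c$ maximal in $W$. Granting in addition that clopen initial subsets of $Y$ separate points (the Priestley property of compact $0$-dimensional semilattices, which the paper treats as known), your step is repaired as follows: if $z \not\leq s \eqdf \sup \img f F$, choose a clopen initial $D$ of $Y$ with $s \in D$ and $z \notin D$, write $D$ as a finite union of clopen principal ideals and pick one, say $\dd p$, containing $s$; then $U \eqdf f^{-1}[\dd p]$ is a clopen initial subset of $X$ containing $F$, so $z \leq \sup \img f U \leq p$ and hence $z \in \dd p \subseteq D$, a contradiction. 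Your other direction, $\hat f(F) \leq z$, is correct as written (it uses only compactness and closedness of the order). In summary: right definition, right uniqueness, but the hard half of continuity is unproved as stated and must use the $0$-dimensionality of $Y$ through something like the paper's Claim.
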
 

\begin{proof} 
We show that the formula $\hat{f} (A) = \sup \img f A$ for $A \in H(X),
$
defines a continuous join-semilattice homomorphism $\hat{f}$
from $\pair{H(X)}{\join}$ into $\pair{Y}{\join}$. 

Fix $F, G \in H(X)$. Then 
$$
\hat{f} (F \join G) \eqdf \hat{f} (F \cup G)
= \sup \img f{F \cup G}
= (\sup \img f F) \join (\sup \img f G)
\eqdf \hat{f} (F) \join \hat{f} (G).
$$ 
Thus $\hat{f}$ is a join-homomorphism. 

To prove the continuity of $\hat{f}$ we shall use the following well-known fact. 

\smallskip

\noindent
\begin{claim*}%
Let $L$ be a compact $0$-dimensional space with a continuous semilattice operation $\join$, and let $W$ be a nonempty clopen initial subset of~$L$. 
Then $W$ is a finite union of clopen principal ideals of the form $\dd m$ where $m \in W$.
\end{claim*}

\begin{proof}
For every $x \in W$ let $C_x$ be a maximal chain in $L$ containing $x$. 
Since $W$ is clopen and $C_x$ is closed,  $C_x \cap W$ has a maximum $c_x$. 
Now fix any maximal element $c_x$ in $W$. 
Since the map  $\fnn{ f_{c_x} }{ L }{ L }$ defined by $f_{c_x} (t) = t \join c_x$ is continuous and since $c_x$ is maximal in $W$, 
\smallskip
\\
\smallskip
\centerline{
$\dd c_x 
\eqdf \setof{ t \in L }{ t \join  c_x = c_x} 
= \setof{ t \in L }{ t \join c_x \in W }
\eqdf f_{c_x}^{-1} [W]$
}
is clopen in $L$.
Finally by compactness, $W$ is a finite union of clopen principal ideals of the form $\dd c_x$ with $c_x \in W$. 
\end{proof}

For any clopen  principal ideal $V$ of $Y$ of the form $\dd p$ with $p \in Y$, $\inv f V$ is a clopen initial subset of $X$.
Let $K \in H(X)$. 
Then $\img f K \subseteq V$ iff $\sup \img f K \leq p$. 
In other words $K \in (\inv f V)^+$,  i.e.  $K \subseteq \inv f V$ 
iff $\hat{f}(K) \eqdf \sup \img f K \in  \dd p \eqdf V $. 
Therefore $\inv { \hat{f} } V = (\inv f V)^+$ and this set is clopen in $H(X)$. 

Now this fact and the above claim imply that $\hat{f}$ is continuous. This completes the proof of Proposition~\ref{prop-2.4}.
\end{proof}

The next result summarizes the properties of the hyperspace of a compact Skula space.
Given a subset $S$ of a poset, we denote by $\Max(S)$ the set of all maximal elements of $S$ and by $\Min(S)$ the set of all minimal elements of $S$.

\begin{theorem} 
\label{thm-2.5}
Let $X$ be a compact Skula space.
Then $X$ is a Priestley space and

\begin{itemize} 
\item[{\rm(1)}]
$H(X)$ is a Skula space.

\item[{\rm(2)}] 
Every closed initial subset $K$ of $X$ is clopen in $X$ and $K$ is finitely generated. 
\newline 
More precisely  $K = \dd \sigma_K$ where $\sigma_K \eqdf \Max(K)  \subseteq X$ is finite.

\item[{\rm(3)}]
$H(X)$ is the join semilattice generated by
$\setof{ \eta(x) }{ x \in X }$.

\item[{\rm(4)}]
$\setof{ K^+ }{ K \in H(X) }$ is a clopen selector for $H(X)$. 
\end{itemize}
\end{theorem}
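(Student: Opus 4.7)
The plan is to prove (2) first; parts (3), (4), and then (1) follow as formal consequences.

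For (2): fix a nonempty closed initial subset $K$ of $X$. Since $X$ is a compact Skula space, Theorem~\ref{thm-2.1}(i)$\Rightarrow$(ii) provides a compatible partial order on $X$ whose principal ideals $\dd y$ are clopen. Because $K$ is initial, $K = \bigcup_{y \in K} \dd y$, and because $K$ is closed in the compact space $X$, I extract a finite subcover $K = \dd y_1 \cup \cdots \cup \dd y_n$; hence $K$ is clopen. Any $y \in \Max(K)$ lies in some $\dd y_i$, so $y \leq y_i$, and by maximality $y = y_i$; thus $\Max(K) \subseteq \{y_1,\dots,y_n\}$ is finite. Conversely, for any $y \in K$, selecting among $\{y_j : y \leq y_j\}$ an element that is maximal in $\{y_1,\dots,y_n\}$ (possible by finiteness) shows $y$ lies below some element of $\Max(K)$. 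Hence $K = \dd \Max(K) = \dd \sigma_K$.

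Part (3) is immediate from (2): for $K \in H(X)$ we have $K = \dd \sigma_K = \bigcup_{x \in \sigma_K} \dd x = \bigvee_{x \in \sigma_K} \eta(x)$, a finite join in $H(X)$ of images of $\eta$. For (4), I verify that $\mathcal{K} \eqdf \{K^+ : K \in H(X)\}$ is a clopen selector for $H(X)$. Each $K^+$ is open by the definition of the topology on $H(X)$ (since $K$ is clopen initial by (2)), and its complement $H(X) \setminus K^+ = (X \setminus K)^-$ is open because $X \setminus K$ is clopen and final; hence $K^+$ is clopen. The three selector conditions then reduce to tautologies: $K \in K^+$; antisymmetry of $\subseteq$ on distinct members of $H(X)$; and transitivity of $\subseteq$, giving $F^+ \subseteq K^+$ whenever $F \subseteq K$.

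Finally, (1) follows from (4) and Theorem~\ref{thm-2.1}(ii)$\Rightarrow$(i): the clopen selector $\mathcal{K}$ endows $H(X)$ with the partial order $\subseteq$ whose principal ideals are the clopen sets $K^+$, so $H(X)$ satisfies condition (ii) and is therefore Skula. The main obstacle is (2), and within (2) the load-bearing step is the compact extraction of a finite subcover by clopen principal ideals; everything else is order-theoretic bookkeeping or a direct appeal to Theorem~\ref{thm-2.1}.
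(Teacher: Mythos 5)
Your proposal is correct and follows essentially the same route as the paper: part (2) via covering the closed initial set by clopen principal ideals and extracting a finite subcover, then (3), (4), (1) as formal consequences (the paper routes (3) through Theorem~\ref{thm-2.3} and Proposition~\ref{prop-2.4}, but your direct derivation $K=\bigvee_{x\in\sigma_K}\eta(x)$ from (2) is equivalent and cleaner). The only point worth making explicit in (1) is that applying Theorem~\ref{thm-2.1}(ii)$\Rightarrow$(i) to $H(X)$ requires knowing $H(X)$ is compact Hausdorff, which is Fact~\ref{fact-2.2}.
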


\begin{proof} 
(2) 
Let $\mU \eqdf \setof{ U_x }{ x \in X }$ be a clopen selector for $X$.
Let $K$ be a nonempty compact initial subset of $X$. 
Since if $x \in K$ then $U_x \eqdf \dd x$ is a clopen subset of $X$ contained in $K$, 
by the compactness of $K$,
there is a finite subset $\sigma_K$ of $K$
such that $K = \bigcup_{z\in\sigma_K} U_z$.
Then $K = \dd K  = \bigcup_{z\in\Max(K)} U_z = \dd \Max(K)$.

(3) is a consequence of Theorem~\ref{thm-2.3} and of Proposition~\ref{prop-2.4}.

(4) 
By Fact~\ref{fact-2.2}, $H(X)$ is compact.
By Parts~(2) and~(3)
any $K  \in H(X)$ is a clopen initial subset $X$ and thus $K^+ \eqdf \setof{ L \in H(X) }{L \subseteq K }$
is clopen in $H(X)$.
Now it is easy to check that $\setof{ K^+ }{ K \in H(X) }$ is a clopen selector for $H(X)$. 

(1) follows from (4) and the fact that $H(X)$ is compact.
\end{proof} 

\section{Interplay between (Cantor-Bendixson) height and (well-founded) rank}
\label{section-3} 

The goal of this section is to show Theorem~\ref{thm-3}: 
(1):~if $X$ is a Skula space then $\height(X) \leq \rank(X)< \w^{\height(X)+1}$ and
(2):~$\rank(H(X)) \leq \w^{\rank(X)}$.

Concerning Part~(1) of Theorem~\ref{thm-3} we prove a little bit more:

\begin{theorem}
\label{thm-3.1}
For each Skula pospace $\pair{X}{\le}$ we have: 
$$
\height(X) \leq \rank(X) < \omega^{\height(X)} \cdot \big( | \Endpt(X)|+1 \big)
< \omega^{\height(X)+1}.
$$

\end{theorem}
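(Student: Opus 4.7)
The proof decomposes into the lower bound $\height(X)\le\rank(X)$ and the upper bound $\rank(X)<\omega^h\cdot(e+1)$ with $h:=\height(X)$ and $e:=|\Endpt(X)|$. The final comparison $\omega^h\cdot(e+1)<\omega^{h+1}$ is immediate since $e<\omega$.

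For the lower bound I would prove the pointwise inequality $\height_X(x)\le\rank_X(x)$ by strong induction on $\rank_X(x)$. At rank~$0$, $x$ is minimal in the Skula order, so by Fact~\ref{Fact-1}(2) the set $U_x=\{x\}$ is clopen and $x$ is isolated, giving $\height_X(x)=0$. In the inductive step with $\rank_X(x)=\alpha>0$, every $y\in U_x\setminus\{x\}$ satisfies $y<x$ and hence $\rank_X(y)<\alpha$; by induction $\height_X(y)\le\rank_X(y)<\alpha$, so $U_x\cap X^{[\alpha]}\subseteq\{x\}$. Since $U_x$ is open, $x$ is isolated in $X^{[\alpha]}$, whence $\height_X(x)\le\alpha$.

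For the upper bound I would strengthen the statement to the pointwise estimate
$$\rank_X(x)<\omega^{\height(U_x)}\cdot(|\Endpt(U_x)|+1)\qquad\text{for every }x\in X,$$
proved by strong induction on $\rank_X(x)$. When $\height(U_x)=0$, the ideal $U_x$ is compact and discrete, hence finite with $|U_x|=|\Endpt(U_x)|$, and $\rank_X(x)\le|U_x|-1$. For $\height(U_x)\ge 1$, I split the predecessors $y<x$ into cases according to the invariants of $U_y\subsetneq U_x$: either (A) $\height(U_y)<\height(U_x)$, which yields $\rank_X(y)+1<\omega^{\height(U_x)}$; (B1) $\height(U_y)=\height(U_x)$ with $\Endpt(U_y)\subsetneq\Endpt(U_x)$, which yields $\rank_X(y)+1<\omega^{\height(U_x)}\cdot|\Endpt(U_x)|$; or (B2) $\Endpt(U_y)=\Endpt(U_x)$. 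Taking the supremum and using that $\omega^{\height(U_x)}\cdot(|\Endpt(U_x)|+1)$ is a limit ordinal for $\height(U_x)\ge 1$ yields the strict bound at $x$. The global inequality then follows by applying the pointwise estimate at each maximal element of $X$: $\Max(X)$ is finite because the clopen cover $\{U_x:x\in\Max(X)\}$ of the compact space $X$ admits a finite subcover which, by maximality, already contains $\Max(X)$; and $\rank(X)=\sup_{x\in\Max(X)}(\rank_X(x)+1)$ since ranks strictly increase along $<$.

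The main obstacle is the critical case (B2): here the induction hypothesis only yields the same target bound for $\rank_X(y)+1$, so strictness of the supremum must be recovered from the structural observation that any such $y$ dominates the entire finite set $\Endpt(U_x)$ inside $U_x\setminus\{x\}$ (in particular (B2) is vacuous whenever $x$ is itself an endpoint of $U_x$, since then $x\in\Endpt(U_x)\setminus U_y$), combined with a cofinality argument on $\omega^{\height(U_x)}\cdot(|\Endpt(U_x)|+1)$ that exploits the finiteness of $\Endpt(U_x)$ to prevent the ranks of such $y$ from being cofinal in the target bound.
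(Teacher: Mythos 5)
Your lower bound $\height_X(x)\le\rank_X(x)$ is correct and essentially the paper's argument (the paper inducts on $\height$ rather than on $\rank$, but the content --- $\dd x$ is an open neighbourhood of $x$ all of whose other points have strictly smaller rank --- is the same). The problem is the upper bound: your case (B2) is a genuine gap, and you say so yourself. The phrase ``a cofinality argument on $\omega^{\height(U_x)}\cdot(|\Endpt(U_x)|+1)$ that exploits the finiteness of $\Endpt(U_x)$ to prevent the ranks of such $y$ from being cofinal'' is not an argument; it is a restatement of what has to be proved, since ruling out cofinality means exhibiting an upper bound strictly below $\omega^{\height(U_x)}\cdot(|\Endpt(U_x)|+1)$ for the ranks of the (B2)-predecessors, which is the whole content of the inductive step. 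Worse, your induction set-up cannot support the natural repair. In case (B2) every relevant $y$ satisfies $\dd e\subseteq U_y$ for any $e\in\Min(\Endpt(U_x))$ (because $e\in\Endpt(U_y)\subseteq U_y$ and $U_y$ is initial), so the only visible route to a strict bound is to split $U_y$ as $\dd e\,\cup\,(U_y\setminus\dd e)$ and estimate the two pieces separately; but $U_y\setminus\dd e$ is not a principal ideal, and your induction hypothesis --- quantified only over points of the fixed space $X$ and applied only to sets of the form $U_z$ --- says nothing about such a piece.

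This is exactly where the paper's proof (Lemma~\ref{lemma-3.2}) does something you do not: it fixes $e\in\Min(\Endpt(\dd x))$, sets $Y=\dd e$ and $Z=(\dd x)\setminus(\dd e)$, uses the subadditivity $\rank(\dd x)\le\rank(Y)+\rank(Z)$ coming from the lexicographic sum $Y+Z$ (adding relations to an initial/final splitting can only increase ranks), and bounds $\rank(Z)$ by an induction hypothesis formulated for \emph{all} Skula pospaces $T$ and all points $t\in T$ with $o_T(t)<o_X(x)$ --- precisely so that it applies to the non-principal clopen piece $Z$. If you strengthen your induction in the same way (quantify over arbitrary clopen subspaces, or over all Skula pospaces, and induct on the value of the target ordinal rather than on $\rank_X(x)$), your trichotomy collapses into the paper's two cases $|\Endpt(\dd x)|=1$ and $|\Endpt(\dd x)|\ge2$ and the proof goes through; as written, the proposal does not prove the theorem. (Your closing reduction to $\Max(X)$ is fine once one first covers $X$ by finitely many principal ideals to see that $\Max(X)$ is finite and $X=\dd\Max(X)$; the paper instead just picks $x$ realizing $\rank(X)$.)
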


In this theorem $\w^{\height(X)}$ is the $\height(X)$-th exponent of the ordinal $\w$.
The exponentiation of ordinals is defined by transfinite induction: 
$\alpha^0=1$ and $\alpha^{\gamma}=\sup_{\beta<\gamma}(\alpha^\beta\cdot \alpha)$ for $\gamma>0$. 
Recall that the multiplication of ordinals is also defined by 
transfinite induction: $\alpha\cdot 0=0$ and 
$\alpha\cdot\gamma = \sup_{\beta<\gamma}(\alpha\cdot\beta+\alpha)$ for $\gamma>0$.

\medskip

We cannot improve Theorem~\ref{thm-3.1} using ordinal invariants. 
Indeed 

\begin{itemize}

\item[{\rm(1)}] 
For the first inequality: if $X$ is canonically Skula then 
$\rank(X) = \height(X)$ 
(see Fact~\ref{Fact-2}). 
                    
\item[{\rm(2)}]
For the inequality $\rank(X) < \omega^{\height(X)+1}$, consider the ordinal space $X_n \eqdf \omega+n$ 
\hbox{($n \geq 1$).} 
Then $\height(X_n) = 1 < \rank(X_n) = \omega+n$ and $\sup_n \rank(X_n) = \omega^2 = \omega^{\height(X_n)+1}$. 
\end{itemize}

Theorem~\ref{thm-3.1} is a consequence of the following result.

\begin{lemma}
\label{lemma-3.2}
For any point $x\in X$ of a Skula pospace $X$ we have  
$$
\height_X(x) \leq \rank_X(x)
<\omega^{\height_X(x)}\cdot ( | \Endpt( \dd x) | + 1).
$$ 
\end{lemma}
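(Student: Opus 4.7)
The plan is to prove the two inequalities separately.

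For the lower bound $\height_X(x)\le\rank_X(x)$, I would induct on $\beta=\height_X(x)$; the base $\beta=0$ is trivial. In the step, for any $\alpha<\beta$ the point $x$ is non-isolated in $X^{[\alpha]}$, so the clopen initial neighbourhood $U_x=\dd x$ must contain some $y\ne x$ with $\height_X(y)\ge\alpha$. Then $y<x$ and by the inductive hypothesis $\rank_X(y)\ge\height_X(y)\ge\alpha$, giving $\rank_X(x)\ge\alpha+1$. Taking the supremum over $\alpha<\beta$ yields $\rank_X(x)\ge\beta$.

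For the upper bound, I would reduce to the case where $x$ is the maximum of $X$ by replacing $X$ with the clopen subspace $Y\eqdf\dd x$; since $Y$ is clopen the quantities $\rank_X(x)=\rank_Y(x)$, $\height_X(x)=\height_Y(x)$ and $\Endpt(\dd x)=\Endpt(Y)$ are unchanged, and $\rank(Y)=\rank_Y(x)+1$. I then prove $\rank(Y)\le\omega^{\gamma}\cdot(n+1)$, where $\gamma=\height(Y)$ and $n=|\Endpt(Y)|$, by transfinite induction on $\gamma$. The base $\gamma=0$ makes $Y$ finite and discrete, giving $\rank(Y)\le|Y|=n$. In the inductive step, enumerate $E=\Endpt(Y)=\{e_1,\dots,e_n\}$ and use Theorem~\ref{thm-2.5} to decompose $Y$: the clopen initial set $\dd E=U_{e_1}\cup\cdots\cup U_{e_n}$ covers every point of height $\gamma$, while the complementary clopen initial part of $Y$ splits into finitely many principal ideals $U_{w_j}$ of height strictly less than $\gamma$. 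The lower-height ideals each contribute less than $\omega^\gamma$ by the inner induction, and each $U_{e_i}$ is again a Skula space of height $\gamma$ whose endpoint count is strictly less than $n$ whenever $x\notin U_{e_i}$, which enables a secondary induction on $n$ at fixed height.

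The main obstacle is the ordinal arithmetic required to keep the combined bound strictly below $\omega^\gamma\cdot(n+1)=\omega^\gamma\cdot n+\omega^\gamma$; the truly delicate subcase is when $x$ is not itself an endpoint, so that some $U_z$ with $z<x$ may share all $n$ endpoints of $Y$. Here a compactness argument handles it: any sequence $z_k<x$ with $\rank(U_{z_k})$ approaching $\omega^\gamma\cdot(n+1)$ has a cluster point $z^*\le x$ in the compact space $Y$, and the $z_k$ eventually lie in the clopen neighbourhood $U_{z^*}$, so $\rank(U_{z_k})\le\rank(U_{z^*})$ for large $k$, forcing the supremum to remain strictly below $\omega^\gamma\cdot(n+1)$ by the inductive bound applied to $U_{z^*}$.
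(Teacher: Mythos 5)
Your proof of the first inequality is essentially the paper's (though you should take $y$ with $\height_X(y)$ \emph{equal} to $\alpha$, which is possible since $(\dd x)^{[\alpha]}$ is a nonempty compact scattered space and hence has isolated points; with only ``$\height_X(y)\ge\alpha$'' the point $y$ may have height $\ge\beta$ and the induction hypothesis does not apply to it). The upper bound, however, breaks down at the decomposition step, which is where the real work lies. The set $Y\setminus\dd E$ is a clopen \emph{final} subset, not an initial one, so Theorem~\ref{thm-2.5} does not split it into principal ideals; and the claim driving your secondary induction --- that $|\Endpt(U_{e_i})|<n$ whenever $x\notin U_{e_i}$ --- is false, because the endpoints of $\dd x$ may be comparable. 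Take $Y=[0,\omega\cdot 2]\sqcup\{x\}$ with $x$ isolated and placed above every ordinal: this is a Skula pospace with $E=\{\omega,\,\omega\cdot 2\}$ and $n=2$, we have $x\notin U_{\omega\cdot 2}=[0,\omega\cdot 2]$, and yet $\Endpt(U_{\omega\cdot 2})=E$, so the parameter $n$ never decreases on the one piece that matters. The paper's remedy is to pick a \emph{minimal} element $e$ of $E$, so that $\dd e$ is \emph{unitary}; then every $y<e$ has $\height(\dd y)<\height(\dd x)$, and the height induction alone yields $\rank(\dd e)\le\omega^{\height(\dd x)}$, while the endpoint-count induction is applied only to the complementary final piece $Z=(\dd x)\setminus(\dd e)$. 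You also never say how the ranks of your $n+1$ overlapping pieces recombine; the paper needs, and uses, the inequality $\rank(\dd x)\le\rank(\dd e)+\rank(Z)$ coming from the lexicographic sum of an initial part and a final part.

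The compactness argument offered for the ``delicate subcase'' is circular and cannot patch this. The cluster point $z^{\ast}$ of a sequence $z_k<x$ may be $x$ itself (this happens precisely when $x$ is not isolated, i.e.\ in every nontrivial case), and then $U_{z^{\ast}}=\dd x$, so ``the inductive bound applied to $U_{z^{\ast}}$'' is exactly the statement being proved. Even when $z^{\ast}<x$, the subspace $U_{z^{\ast}}$ can have the same height $\gamma$ and the same number $n$ of endpoints as $\dd x$ (again $U_{\omega\cdot 2}$ in the example above), so no instance of your induction hypothesis applies to it. The uniform strict bound on $\sup_{z<x}\rank(U_z)$ below $\omega^{\gamma}\cdot(n+1)$ has to come from the structure of the decomposition, as in the paper's proof, not from compactness.
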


\begin{proof}
The proof is divided in two parts. 

\smallskip

(1)
By induction on $\height(x)$,
we prove the inequality $\height(x) \le \rank(x)$.
This inequality is trivially true if $\height(x)=0$.
Assume that for some ordinal $\alpha>0$ the inequality
$\height(y) \le \rank(y)$ has been proved for all points $y\in X$
satisfying $\height(y)<\alpha$.
Choose any point $x\in X$ with $\height(x)=\alpha$.
Taking into account that $\dd x$ is clopen and thus open, observe that 
\begin{eqnarray} 
\height_X(x) 
&
=
& \min_{W}\sup_{y\in W\setminus\{x\}}(\height(y)+1)
\nonumber
\\
&
\le 
&\sup_{y\in \dd x\setminus\{x\}}(\height(y)+1)
 \ \le \sup_{y\in \dd x\setminus\{x\}}(\rank(y)+1) 
 \ \eqdf \rank_X(x)
\nonumber
\end{eqnarray} 
where the minimum, $\displaystyle \min_{W}$, is taken over all open neighborhoods $W$ of $x$ in $X$.

\smallskip 

(2) 
We shall prove 
$$\rank_X(x) \, < \, o_X(x) \eqdf \w^{\height(\dd x)} 
{\cdot} \bigl(  | \Endpt(\dd x) | {+} 1 \bigr)$$ 
by induction on the ordinal $o_X(x)$. 
By the definition of $o_X$, the map  
$\fnn{ o_X }{ \pair{X}{ \leq } }{ \pair{\Ord}{ \leq } }$ defined by $x \mapsto o_X(x)$ 
is increasing.

Assume first that  $o_X(x) \leq \omega$.
Since $| \Endpt(\dd x) | +1 \geq 2$ we have $\height(\dd x)=0$.
Therefore the set $\dd x$ is finite and hence
$\rank_X(x) \leq |\dd x| < |\dd x|+1 \eqdf o_X(x)$.

Next suppose that for some ordinal $\alpha$ \ --\,of the form $o_{.}(.)$\,--\  the inequality $\rank_T(t)<o_T(t)$ has been proved for all Skula
pospaces $T$ and all points $t \in T$ satisfying
$o_T(t) < \alpha$. 

Fix a point $x\in X$ such that $o_X(x) = \w^{\height(\dd x)} 
\cdot \bigl(  |  \Endpt(\dd x) | {+} 1 \bigr) \eqdf \alpha$. 
We set $E = \Endpt(\dd x)$ and, since $E$ is a finite poset, we fix $e \in \Min(E)$ i.e. $e$ is a minimal element of $E$. 
Let $Y = \dd e$ and $Z = (\dd x) \setminus (\dd e)$  ($Z$ can be empty).
So $Y$ and $Z$ are clopen subspaces of $X$, and $Y$ is unitary and $e = \lastpt(Y)$. 
Note that $Y = \dd e$ is an initial subset of $\dd x$ and that 
$\height(\dd e) = \height(Y) = \height(\dd x)$.

Fix $y \in Y$ with $y<e$. 
So $\height(\dd y) < \height(\dd e)$ and thus 
$\omega^{\height(\dd y)}  < \omega^{\height(\dd e)} 
= \omega^{\height(\dd x)}$.
Since $e \in \Min(E)$, by induction hypothesis, 
$\rank(y) < o_X(y) \eqdf \omega^{\height(\dd y)} (|E]+1) < \omega^{\height(\dd e)}$. 
This fact plus the fact that $Y$ is unitary imply:  

\begin{itemize}
\item[{\rm(i)}]
$\rank(Y) 
\eqdf \rank_Y(e) 
\leq \omega^{\height_X(\dd e)} = \omega^{\height_X(\dd x)}$.  
\end{itemize}
 
\noindent
{\it Case $1$. $|E| = 1$, i.e. $E = \{e\}$.}  
Since $e \in \Min(E)$ and $e \not\in Z \eqdf (\dd x) \setminus (\dd e)$ we have $\height(Z) < \height(Y)$, and thus  
by induction hypothesis, 

\begin{itemize}
\item[{\rm(ii)}] 
$\rank(Z) = \rank_Z(x) < o_Z(x) 
\eqdf \omega^{\height(Z)} \cdot ( | \Endpt(Z) | +1 )$
\newline
\phantom{$\rank(Z)$ }
$< \omega^{\height(Y)} = \omega^{\height(\dd x)}$. 
\end{itemize} 
Since $Y$ is an initial subset of $\dd x$ and thus $Z$ is a final  subset of $\dd x$, using (i) and (ii), we obtain: 
\begin{eqnarray} 
\rank(\dd x)  \leq \rank(Y + Z) 
&
\leq
&
\rank(Y)  +  \rank(Z) 
\ < \  
\omega^{\height(\dd x)} + \omega^{\height(\dd x)} 
\nonumber
\\
&
=
& 
\omega^{\height(\dd x)} \cdot 2 
\ \eqdf \ o_X(x) \ = \ \alpha
\nonumber 
\end{eqnarray} 
where $Y + Z$ denotes the lexicographic sum of $Y$ and $Z$, and so $y'<z'$ for every $y' \in Y$ and $z' \in Z$.

\medskip

\noindent
{\it Case $2$. $|E| \geq 2$.}
Recall that $Y \eqdf \dd e$ and 
$Z \eqdf (\dd x) \setminus (\dd e)$. 
Since $E \setminus \{e\}$ is nonempty, 
$\height(Y) \eqdf \height(\dd x) = \height(Z)$ and 
$\Endpt(Z) = E \setminus \{e\}$. 
So $ | \Endpt(Z) | = | E | -1 < | E |$. 
By the induction hypothesis, 

\begin{itemize}
\item[{\rm(iii)}]
$\rank(Z) < o_Z(x) \eqdf \omega^{\height(Z)} {\cdot} \bigl(  \, | \Endpt(Z) \,  | {+} 1 \bigr) 
= \omega^{\height(\dd x)} {\cdot} | E | $.   
\end{itemize} 
As in Case~1, by (i) and (iii), we obtain: 
\begin{eqnarray} 
\rank_X(x) \ = \ \rank(\dd x) 
\nonumber
&
\leq
&
\rank(Y)  +  \rank(Z) 
\nonumber
\\
 & <  & 
\omega^{\height(\dd  x))}  +  \omega^{\height(\dd x))} \cdot |E|
\smallskip
\nonumber
\\
&
\ < \ 
&
\omega^{\height(\dd  x)} {\cdot} \bigl(  | E | {+} 1 \bigr) \ \eqdf \  o_X(x) = \alpha \, , 
\nonumber 
\end{eqnarray} 
that ends the proof of the lemma. 
\end{proof}

\begin{proof}[Proof of Theorem~{\rm{\ref{thm-3.1}}}] 
By Lemma~\ref{lemma-3.2}, for $x \in X$ we have 
\begin{eqnarray} 
\height_X(x) \ \leq \ \rank_X(x)
\nonumber
&
<
&
\omega^{\height_X( \dd x ) } \cdot \bigl( \, \bigl| \, \Endpt(\dd x)  \, \bigr|+1 \bigr)
\ \eqdf \ \omega^{\height_X( \dd x )}\cdot m \  
\\
&
 <  
&
\omega^{\height(X)}\cdot \omega 
\ = \ \omega^{\height(X)+1}  
\nonumber 
\end{eqnarray} 
where $m \eqdf \bigl| \, \Endpt(\dd x)  \, \bigr|+1 < \omega$.
Hence $\height(X) \leq \rank(X) < \omega^{\height(X)+1}$\,:
the last inequality follows from the fact that there is an $x \in X$ such that
$\rank(X) = \rank(x)$.
\end{proof}

Our next step is to prove:
$$
\height(H(X)) \leq \rank(H(X)) \leq \w^{\rank(X)} < \w^{\w^{\height(X)+1}} \, .
$$
The first and the last inequality follows easily from Theorem~\ref{thm-3.1}. 
The difficult part is the inequality $\rank(H(X)) \leq \omega^{\rank(X)}$,  
for which we need some preparation.
Any (nonzero) ordinal $\alpha$ has a \dfn{Cantor decomposition}:
$\alpha \eqdf \omega^{\alpha_0} p_0 + \cdots + \omega^{\alpha_\ell} p_\ell$
where $\alpha_0 > \cdots > \alpha_\ell$ and $p_i > 0 $ for $i \leq \ell$.
We define the \dfn{Hessenberg's natural sum of ordinals}
(also called the \dfn{polynomial sum}) of the ordinals
$\alpha \eqdf \omega^{\gamma_0} p_0 + \cdots + \omega^{\gamma_m} p_m$ and
$\beta \eqdf \omega^{\gamma_0} q_0 + \cdots + \omega^{\gamma_m} q_m$
(where the $p_i$'s and $q_i$'s can be $0$), as the ordinal: 
$$
\alpha \oplus \beta
= \omega^{\gamma_0} (p_0+q_0) + \cdots + \omega^{\gamma_m} (p_m+q_m)\,.
$$ 
For example if $\alpha = \omega^{\omega + \omega} 8 + \omega^{7} 3$ and
$\beta = \omega^{\omega} + \omega^{7} + \omega^{2} + 5 $ then
$\alpha \oplus \beta = \omega^{\omega + \omega} 8 + \omega^{\omega}
+ \omega^{7} 4 + \omega^{2} + 5$.
Notice that $\oplus$ has the following properties:  
for every ordinals $\alpha$, $\beta$, $\gamma$ and $\delta$ we have 

\begin{itemize}
\item[{\rm(i)}]
$\alpha \oplus \beta = \beta \oplus \alpha$.

\item[{\rm(ii)}]
$(\alpha \oplus \beta) \oplus \gamma = \alpha \oplus (\beta \oplus \gamma)$.

\item[{\rm(iii)}]
$\alpha \oplus 0 = \alpha$.

\item[{\rm(iv)}]  
$\beta < \gamma$ if and only if $\alpha \oplus \beta < \alpha \oplus \gamma$.

\item[{\rm(v)}] 
$\alpha , \beta < \omega^{\delta}$ implies $\alpha \oplus \beta < \omega^{\delta}$.

\end{itemize} 
In \cite[Item (1), p. 55]{AB1} it is also shown that for every $\alpha$, $\beta$ and $\gamma$: 

\begin{itemize}
\item[{\rm(vi)}]
$\alpha \oplus \beta$ is strictly increasing in both arguments. 

\item[{\rm(vii)}]
if $\gamma < \alpha \oplus \beta$ then there are $\alpha' \leq \alpha$ and $\beta' \leq \beta$ such that $\gamma = \alpha' \oplus \beta'$ 
\\
(with  $\alpha' < \alpha$ or $\beta' < \beta$). 
(Do not be tempted to think that if 
$\alpha < \gamma < \alpha \oplus \beta$ then 
$\gamma = \alpha \oplus \beta'$ for some $\beta_0 < \beta$.)

\end{itemize} 
We give a useful application of the $\oplus$ operation due to 
Telg\`arsky \cite[Theorem 2]{T}. 
A proof can also be  found in Pierce
\cite[Ch. 21, Proposition 2.21.1]{Pi}. 

\begin{theorem}[Telg\`arsky]
\label{thm-3.3}
\begin{it}
Let $X$ and $Y$ be compact scattered spaces. 
For every $\pair{x}{y} \in X {\times} Y$ we have:
$\height_{X{\times}Y} (\pair{x}{y}) = \height_{X} (x) 
\oplus \height_{Y} (y) $, 
and thus $\height(X{\times}Y) = \height(X) \oplus \height(Y)$. 

Moreover if $X$ and $Y$ are unitary then
$X {\times} Y$ is unitary and
$$\lastpt(X{\times}Y) = \pair{ \! \lastpt(X) }{ \lastpt(Y)  }.$$ 
\end{it}
\end{theorem}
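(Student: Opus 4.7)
The plan is to prove by transfinite induction on $\gamma$ the pointwise identity
$$(X{\times}Y)^{[\gamma]}=\{(x,y)\in X{\times}Y : \height_X(x)\oplus\height_Y(y)\geq\gamma\}.$$
Once this is established, setting $\gamma=\height_{X\times Y}(x,y)$ and $\gamma=\height_{X\times Y}(x,y)+1$ immediately yields $\height_{X\times Y}(x,y)=\height_X(x)\oplus\height_Y(y)$, and $\height(X{\times}Y)=\height(X)\oplus\height(Y)$ follows from strict monotonicity of $\oplus$ (property~(vi)) applied to the supremum.

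The base case $\gamma=0$ is trivial, and the limit case is routine: both sides commute with intersections over $\gamma'<\gamma$, using on the right side that $\height_X(x)\oplus\height_Y(y)\geq\gamma$ iff this ordinal dominates every $\gamma'<\gamma$. The meat is the successor step $\gamma=\gamma'+1$. A point $(x,y)$ lies in $(X{\times}Y)^{[\gamma'+1]}$ iff it is a non-isolated point of $(X{\times}Y)^{[\gamma']}$, which by the inductive hypothesis means: for every product neighbourhood $U{\times}V$ of $(x,y)$ there is $(x',y')\in (U{\times}V)\setminus\{(x,y)\}$ with $\height_X(x')\oplus\height_Y(y')\geq\gamma'$.

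For the forward implication, set $\alpha\eqdf\height_X(x)$ and $\beta\eqdf\height_Y(y)$, and choose $U\ni x$, $V\ni y$ witnessing these heights (so every $x'\in U\setminus\{x\}$ has $\height_X(x')<\alpha$, and similarly for $V$). Any $(x',y')\in (U{\times}V)\setminus\{(x,y)\}$ differs from $(x,y)$ in at least one coordinate, so by strict monotonicity of $\oplus$ we get $\height_X(x')\oplus\height_Y(y')<\alpha\oplus\beta$. Combined with the hypothesis this forces $\alpha\oplus\beta>\gamma'$, hence $\alpha\oplus\beta\geq\gamma$. For the converse, assume $\alpha\oplus\beta\geq\gamma'+1$ and fix a product neighbourhood $U{\times}V$. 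Since $\gamma'<\alpha\oplus\beta$, property~(vii) produces $\alpha'\leq\alpha$ and $\beta'\leq\beta$ with $\alpha'\oplus\beta'=\gamma'$ and at least one of the two inequalities strict; assume $\alpha'<\alpha$ (the other case is symmetric). Because $x\in X^{[\alpha'+1]}$, the neighbourhood $U$ contains some $x'\neq x$ with $\height_X(x')\geq\alpha'$; then $(x',y)\in (U{\times}V)\setminus\{(x,y)\}$ and $\height_X(x')\oplus\height_Y(y)\geq\alpha'\oplus\beta\geq\alpha'\oplus\beta'=\gamma'$, as required.

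Finally, for the ``moreover'' clause, suppose $X$ and $Y$ are unitary with end-points $\lastpt(X)$ and $\lastpt(Y)$. Any point $(x,y)\in X{\times}Y$ of maximum height satisfies $\height_X(x)\oplus\height_Y(y)=\height(X)\oplus\height(Y)$, and then strict monotonicity of $\oplus$ (property~(vi)) forces $\height_X(x)=\height(X)$ and $\height_Y(y)=\height(Y)$; by unitarity this gives $(x,y)=(\lastpt(X),\lastpt(Y))$, so $X{\times}Y$ is unitary with that end-point. The only delicate step is choosing the decomposition $\gamma'=\alpha'\oplus\beta'$ in the converse direction of the successor case: we must ensure at least one strict inequality so that the witness point is genuinely distinct from $(x,y)$, which is exactly what property~(vii) of Hessenberg sum supplies.
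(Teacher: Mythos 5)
The paper does not prove this statement: it is quoted from Telg\`arsky \cite[Theorem 2]{T}, with Pierce \cite[Ch.~21, Proposition 2.21.1]{Pi} cited for a proof, so there is no in-paper argument to compare against. Your proof is correct and self-contained, and it is essentially the classical argument: the inductive identity $(X{\times}Y)^{[\gamma]}=\setof{\pair{x}{y}}{\height_X(x)\oplus\height_Y(y)\geq\gamma}$ is Telg\`arsky's derived-set formula in pointwise form, with the two directions of the successor step handled exactly by the monotonicity property (vi) and the decomposition property (vii) of the Hessenberg sum that the paper records in Section~\ref{section-3}. The two places that need care --- ensuring the witness $(x',y)$ in the converse direction is distinct from $\pair{x}{y}$ (which is why the strict inequality in (vii) matters), and the deduction of unitarity from strict monotonicity of $\oplus$ --- are both handled correctly.
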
 

Let $\pair{W}{\leq}$ be a nonempty well founded poset. 
We denote by 
$\Finnn{W}$ the set of nonempty finite subsets of $W$ 
and by $K(W)$ be the set of all initial subsets of $W$ generated by a nonempty finite subset $W$. 
That is, 
\begin{itemize}
\item[{\rm($*$)}]
$I \in K(W)$ if and only if there is 
$\sigma \in \Finnn{W}$ 
such that $I = \dd \sigma \eqdf \bigcup_{p \in \sigma} \dd p$. 
\\
Moreover we may assume that $\sigma$ is an antichain of $W$, and thus $\sigma = \Max(I)$.
\end{itemize}
By a result of Birkhoff \cite[Ch. VIII, \S2, Theorem 2]{Birkhoff}, $\pair{ K(W) }{ \subseteq }$ is well-founded. 
Therefore 
the rank function $\fnn{ \rank_{K(W)} }{ K(W) }{ \Ord }$ is well-defined.

The next result is due to 
N.~Zaguia~\cite[Ch. 1, Theorem~II-1.2]{Z} (Thesis, in French, 1983).
For completeness we recall his proof. 

\begin{theorem}[Zaguia]
\label{thm-3.4}
Let $\pair{ W }{ \leq }$ be a well founded poset.
Then $\rank(K(W)) \leq \omega^{\rank(W)}$.
\end{theorem}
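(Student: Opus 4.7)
My plan is to exhibit a strictly increasing ordinal-valued function $\phi\colon K(W)\to\Ord$ by setting
$$\phi(I)\eqdf\bigoplus_{p\in\Max(I)}\omega^{\rank_W(p)}$$
for each $I\in K(W)$, where $\Max(I)$ is the finite antichain furnished by $(*)$ and $\bigoplus$ denotes the Hessenberg natural sum (with the empty sum interpreted as $0$). Since each $\rank_W(p)<\rank(W)$, property~(v) immediately gives $\phi(I)<\omega^{\rank(W)}$; once I prove that $\phi$ is strictly monotone, a standard transfinite induction on $K(W)$ will bound $\rank_{K(W)}$ pointwise by $\phi$, and the theorem follows.

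The crucial step is the strict-monotonicity claim: if $I,J\in K(W)$ satisfy $I\subsetneq J$ then $\phi(I)<\phi(J)$. To prove it, I would choose a function $f\colon\Max(I)\to\Max(J)$ sending each $p\in\Max(I)$ to some $q\in\Max(J)$ with $p\leq q$, which exists because $I\subseteq J=\dd\Max(J)$. The fibres of $f$ partition $\Max(I)$, yielding
$$\phi(I)=\bigoplus_{q\in\Max(J)}\Bigl(\bigoplus_{p\in f^{-1}(q)}\omega^{\rank_W(p)}\Bigr).$$
For each $q\in\Max(J)$, either $q\in f^{-1}(q)$, in which case the antichain property of $\Max(I)$ forces $f^{-1}(q)=\{q\}$ and the inner sum equals $\omega^{\rank_W(q)}$; or $q\notin f^{-1}(q)$, in which case every $p\in f^{-1}(q)$ satisfies $p<q$, hence $\rank_W(p)<\rank_W(q)$, and property~(v) gives $\bigoplus_{p\in f^{-1}(q)}\omega^{\rank_W(p)}<\omega^{\rank_W(q)}$. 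Summing termwise over $\Max(J)$ via monotonicity of $\oplus$ (property~(vi)) yields $\phi(I)\leq\phi(J)$. Equality would force every $q\in\Max(J)$ into the first case, so $\Max(J)\subseteq\Max(I)$ and $f$ restricts to the identity on $\Max(J)$; any $p\in\Max(I)\setminus\Max(J)$ would then satisfy $p\leq f(p)\in\Max(J)\subseteq\Max(I)$ with $p\neq f(p)$, contradicting the antichain property of $\Max(I)$. Hence $\Max(I)=\Max(J)$ and so $I=J$, a contradiction.

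Granted the claim, the induction gives $\rank_{K(W)}(I)\leq\phi(I)$ for every $I$: for each $J\subsetneq I$, the inductive hypothesis and the claim yield $\rank_{K(W)}(J)+1\leq\phi(J)+1\leq\phi(I)$. Combined with $\phi(I)<\omega^{\rank(W)}$ (a limit ordinal, since $W\neq\emptyset$ forces $\rank(W)\geq 1$), one obtains $\rank_{K(W)}(I)+1\leq\omega^{\rank(W)}$ for every $I$, whence $\rank(K(W))\leq\omega^{\rank(W)}$. I expect the main obstacle to be the strict-monotonicity claim, particularly the case in which several elements of $\Max(I)$ accumulate strictly below a single element of $\Max(J)$: this is precisely where property~(v) --- a finite $\oplus$-sum of ordinals strictly below $\omega^{\delta}$ remains strictly below $\omega^{\delta}$ --- is indispensable.
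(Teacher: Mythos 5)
Your proof is correct, and it takes a genuinely different route from the paper's. The paper proceeds through three claims: first a subadditivity lemma, $\rank_{K(W)}(I'\cup I'')\leq\rank_{K(W)}(I')\oplus\rank_{K(W)}(I'')$, proved by induction on $\rank_{K(W)}(I)$; then its iteration over $\Max(I)$; and finally the bound $\rank_{K(W)}(\dd p)<\omega^{\rank_W(p)}$, proved by a second induction on $\rank_W(p)$. You instead exhibit the explicit majorant $\phi(I)=\bigoplus_{p\in\Max(I)}\omega^{\rank_W(p)}$ and verify its strict monotonicity by a purely combinatorial fibre argument (choosing $f\colon\Max(I)\to\Max(J)$ with $p\leq f(p)$ and splitting the fibres into the identity case and the strictly-below case), after which the standard fact that a strictly increasing ordinal-valued function dominates the rank --- a fact the paper itself records just before Theorem A.3 --- finishes the job. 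Your key step (strict monotonicity of $\phi$) replaces both of the paper's rank inductions with a single direct verification, which is arguably more elementary and yields a concrete pointwise bound $\rank_{K(W)}(I)\leq\bigoplus_{p\in\Max(I)}\omega^{\rank_W(p)}$; the paper's route, by contrast, establishes the $\oplus$-subadditivity of $\rank_{K(W)}$ under unions as an intermediate result of independent interest. Both arguments hinge on the same use of property~(v) to control a finite natural sum of ordinals below $\omega^{\rank_W(q)}$, and both handle the delicate case of several maximal elements of $I$ sitting strictly below one maximal element of $J$ in essentially the same way. Your parenthetical appeal to $\omega^{\rank(W)}$ being a limit ordinal is harmless but unnecessary: $\rank_{K(W)}(I)<\omega^{\rank(W)}$ already gives $\rank_{K(W)}(I)+1\leq\omega^{\rank(W)}$, which is all the supremum needs.
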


\begin{proof} 
By the definition, 
$$\rank_{K(W)}(I) \eqdf \sup( \setof{ \rank_{K(W)}(J) + 1 }
{ J \in K(W) \text{ and } J \subsetneqq I })
$$ 
 for any $I \in K(W)$.  
For instance 
$\rank_{K(W)}(I) = 0$ if and only if there is a minimal element $p$ of $W$ such that $I = \{p\}$.
To prove the theorem, we need some preliminary  facts and the main key is the following result.
\medskip

\noindent 
\begin{it}%
Claim~$1$. 
Let $I \in K(W)$, and let $I' , I'' \in K(W)$ be such that $I = I' \cup I''$. 
Then 
\newline
\centerline{
$\rank_{K(W)}(I) \leq \rank_{K(W)}(I') \oplus \rank_{K(W)}(I'')$.
}%
\end{it}%

\proof 
The proof is done by induction on $\beta \eqdf \rank_{K(W)}(I)$. 
First if $\beta = 0$, i.e. $I = \{p\}$ with $p \in \Min(W)$, there is nothing to prove. 

Next let $I, I', I'' \in K(W)$ be such that $I = I' \cup I''$ with $\rank_{K(W)}(I) = \beta$. 
Let $J \in K(W)$ be such that $J \subsetneqq I$. 
Setting $J' \eqdf J \cap I'$ and $J'' \eqdf J \cap I''$ we have 
(1):~$J = J' \cup J''$, 
(2):~$J' \subseteq I'$ and $J'' \subseteq I''$ and  
(3):~$J' \neq I'$ or $J'' \neq I''$. 
Since $\rank_{K(W)}(J) < \beta \eqdf \rank_{K(W)}(I)$, by the induction hypothesis we have 
$$
\rank_{K(W)}(J) 
\leq \rank_{K(W)}(J') \oplus \rank_{K(W)}(J''). 
$$ 
Since $J' \neq I'$ or $J'' \neq I''$, 
\newline
\centerline{ 
$\rank_{K(W)}(J') \oplus \rank_{K(W)}(J'') 
< \rank_{K(W)}(I') \oplus \rank_{K(W)}(I'')$.
} 
Hence
\newline
\centerline{ 
$\rank_{K(W)}(J) 
< \rank_{K(W)}(I') \oplus \rank_{K(W)}(I'')
$} 
and thus $\rank_{K(W)}(I) \leq \rank_{K(W)}(I') \oplus \rank_{K(W)}(I'')$. 
\hfill$\blacksquare$
\medskip

\noindent
\begin{it}%
Claim~$2$.
For every $I \in K(W)$ 
$$
\rank_{K(W)} (I) \leq 
\rank_{K(W)} (\dd  p_{0})  \oplus \cdots \oplus  \rank_{K(W)} (\dd  p_{n-1})
$$ 
where $\{ p_{0},  \ldots, p_{n-1} \} \eqdf \Max(I)$. 
\end{it}

\proof 
Claim~2 follows from Claim~1 applied to 
$I = \bigcup_{i < n} \, ( \dd p_i )$
\hfill$\blacksquare$
\medskip

\noindent 
\begin{it}%
Claim~$3$.
For every $p \in W$ we have 
$\rank_{K(W)} (\dd p ) < \omega^{\rank_W(p)}$.
\end{it}

\proof 
By induction on $\beta \eqdf \rank_{W}(p)$.
First assume $\beta = 0$. 
So $p \in \Min(W)$ and thus $\{p\} = \dd  p \in \Min (K(W))$. 
Hence $\rank_{K(W)}(\dd p) = 0$, and consequently 
$\rank_{K(W)} (\dd p ) = 0 < 1 = \omega^0 = \omega^{\rank_W(p)}$. 

Now let $p \in W$ be such that $\beta = \rank_W(p)$. 
Let $I \in K(W)$ be such that $I \subsetneqq \dd x$. 
Let  
$\{ p_{0},  \ldots, p_{n-1} \} \eqdf \Max(I)$. 
For each $p_i < p$ we have $\rank_W(p_i) < \beta \eqdf \rank_W(p)$, and thus by the induction hypothesis 
$\rank_{K(W)} (\dd p_i ) < \omega^{\rank_{W}(p_i)} 
< \omega^{\rank_{W}(p)} \eqdf \omega^\beta$. 
Since $I = \bigcup_{i<n} \dd p_i$ and $\rank_{K(W)} (\dd p_i ) < \omega^{\rank_{W}(p)} \eqdf \omega^\beta$ for every $p_i$, by Claim~2, we have:
$$
\rank_{K(W)} (I) 
\leq \rank_{K(W)} (\dd p_0)  \oplus \cdots \oplus  \rank_{K(W)} (\dd p_{n-1}) 
< \omega^\beta \eqdf \omega^{\rank_{W}(p)} \, . 
$$  
Therefore 
$\rank_{K(W)} (\dd p) \leq \omega^{\rank_{W}(p)}$. 
\hfill$\blacksquare$

\medskip

Now we prove Zaguia's Theorem.
Let $I \in K(W)$. 
We set $\Max(I)  \eqdf \{ p_0,  \ldots, p_{n-1} \} $. 
By Claim~3, $\rank_{K(W)}(\dd p) < \omega^{\rank_W(p)}$ for any $p \in W$. 
So by Claim~2,  
$$
\rank_{K(W)} (I) \leq 
\rank_{K(W)} (\dd p_0)  \oplus \cdots \oplus  \rank_{K(W)} (\dd p_{n-1}) 
< \omega^{\rank(W)}\, . 
$$ 
Hence $\rank_{K(W)} (I) < \omega^{\rank(W)}$ for every $I \in K(W)$ and thus
$\rank(K(W)) \leq \omega^{\rank(W)}$. 
\end{proof}

\begin{remark}
\label{remark-3.5}
Let $X$ be a Skula space. 
Fix a clopen selector $\mU = \setof{ U_x }{ x \in X }$ for $X$. 
So $\mU$ defines a well-founded poset $\pair{ X }{ \leq }$ where we set $x \leq y$ if and only is $U_x \subseteq U_y$.
By Theorem~\ref{thm-2.5}(2), 
a member $G$ of $H(X)$ is an initial subset of $\pair{ X }{ \leq }$ generated by a nonempty finite subset of $X$. 
In other words $K(X) = H(X)$ is Skula and 
$\pair{H(X)}{\subseteq} \, = \, \pair{K(X)}{\leq}$.  
Therefore by Zaguia Theorem~\ref{thm-3.4} 
$\rank(H(X)) \leq \omega^{\rank(X)}$.
\hfill$\blacksquare$
\end{remark}

Now we can state the second result on the relationship between the rank and the height functions.

\begin{theorem}
\label{thm-3.6}
For any compact Skula pospace $X$,
its hyperspace $H(X)$ satisfies: 
$$
\height(H(X)) \leq \rank(H(X)) \leq \omega^{\rank(X)} < \omega^{\omega^{\height(X)+1}} \, .
$$ 
\end{theorem}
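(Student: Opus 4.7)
The plan is to derive the three inequalities in sequence, combining Theorems~\ref{thm-3.1} and~\ref{thm-3.4} with the structural description of $H(X)$ from Theorem~\ref{thm-2.5}.

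For the first inequality $\height(H(X)) \leq \rank(H(X))$, I would simply invoke Theorem~\ref{thm-2.5}(1), which gives us that $H(X)$ is itself a compact Skula pospace, and then apply Theorem~\ref{thm-3.1} to $H(X)$ in place of $X$. The left-hand inequality of that theorem gives the desired bound.

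The central inequality $\rank(H(X)) \leq \omega^{\rank(X)}$ is the heart of the statement and is exactly the content pointed out in Remark~\ref{remark-3.5}. The key observation is the following identification. Fix a clopen selector $\mU = \setof{U_x}{x \in X}$ for $X$, and view $X$ as the well-founded poset $\pair{X}{\leq}$ defined by $x \leq y \iff U_x \subseteq U_y$. By Theorem~\ref{thm-2.5}(2), every nonempty closed initial subset of $X$ is of the form $\dd\sigma$ for some finite nonempty $\sigma \subseteq X$; that is, $H(X)$ as a poset under inclusion is exactly the poset $K(X)$ of finitely generated initial subsets of $\pair{X}{\leq}$. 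Hence Zaguia's Theorem~\ref{thm-3.4}, applied to $W = X$, yields
\[
\rank(H(X)) \;=\; \rank(K(X)) \;\leq\; \omega^{\rank(X)}.
\]

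For the last inequality $\omega^{\rank(X)} < \omega^{\omega^{\height(X)+1}}$, I apply Theorem~\ref{thm-3.1} to $X$, which gives $\rank(X) < \omega^{\height(X)+1}$; strict monotonicity of the ordinal exponential $\gamma \mapsto \omega^{\gamma}$ then delivers the bound.

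I do not expect any serious obstacle: the nontrivial work has already been done in Theorem~\ref{thm-2.5}(2) (every element of $H(X)$ is finitely generated, so $H(X)$ coincides with $K(X)$ as a poset) and in Zaguia's Theorem~\ref{thm-3.4}. The only small point to verify carefully is that the well-founded poset structure on $X$ arising from the clopen selector $\mU$ is the same one used to compute $\rank(X)$, so that the application of Theorem~\ref{thm-3.4} yields the quantity $\omega^{\rank(X)}$ in our notation and not some a priori different ordinal; this is immediate once one recalls from Fact~\ref{Fact-1}(1) that the well-foundedness of $X$ used throughout Section~\ref{section-3} is precisely the one induced by a clopen selector.
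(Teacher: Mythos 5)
Your proposal is correct and follows essentially the same route as the paper: the first and last inequalities come from Theorem~\ref{thm-3.1} (applied to $H(X)$, which is Skula by Theorem~\ref{thm-2.5}, and to $X$ respectively), and the middle inequality is obtained by identifying $\pair{H(X)}{\subseteq}$ with the poset $K(X)$ of finitely generated initial sets via Theorem~\ref{thm-2.5}(2) and invoking Zaguia's Theorem~\ref{thm-3.4}, exactly as in the paper's Remark~\ref{remark-3.5}. Your closing remark about checking that the order used to compute $\rank(X)$ is the one induced by the clopen selector is a sensible point of care that the paper leaves implicit.
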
 

\begin{proof}
By Theorem~\ref{thm-3.1},  
$\height(H(X)) \leq \rank(H(X))$, and   
$\rank(X) < \omega^{\height(X)+1}$ and thus  
$\omega^{\rank(X)} < \omega^{\omega^{\height(X)+1}}$. 
Finally the inequality $\rank(H(X)) \leq \omega^{\rank(X)}$ 
is proved in Remark~\ref{remark-3.5}. 
\end{proof}

Note that the above inequality $\rank(H(X)) \leq \w^{\rank(X)}$ cannot be improved:
consider the one-point compactification $X = \natN \cup \{\infty\}$ of the discrete space $\natN$ ordered by $n<\natN$ for $n \in \natN$. 
So $\natN$  is an antichain.
Then $X$ is a (tree-like) Skula space, $H(X) = [\Nat]^{<\omega} \cup \{X\}$ and thus $\rank(H(X)) = 1 = \omega^{\rank(X)}$.

\section{Canonical and tree-like Skula spaces and their applications}
\label{section-4}

Our first goal is to prove Theorem~\ref{thm-4} that we restate in Theorem~\ref{thm-4.1}. It is a consequence of Propositions~\ref{prop-4.3} and Theorem~\ref{thm-4.4}. 
We use only the following facts concerning a clopen selector $\calU$ for $X$:
\begin{itemize}
\item[$\BULLET$]
Any closed initial subset of $X$ is a finite union of members of $\calU$, and thus is clopen. 
In particular:
\item[$\BULLET$]
The intersection of any two members of $\calU$ is a finite union of members of $\calU$. 
\end{itemize}

We recall some notation. Given a set $I$, let 
\smallskip
\\
\smallskip
\centerline{
$\Fin{I}$  
be the set of finite subsets of $I$ \quad 
and \quad 
$\Finnn{I} = \Fin{I} \setminus \{ \emptyset \}$.
}

\begin{theorem}
\label{thm-4.1} 
\begin{it}
Let $X$ be a canonical Skula space.  
Then the space $H(X)$ is a canonical Skula space. 
More precisely $\setof{ U^+ }{ U \in H(X) }$ is a canonical clopen selector for $H(X)$.
\end{it} 
\end{theorem}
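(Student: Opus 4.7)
\medskip
\noindent\textbf{Proof plan.} The strategy is to strengthen Theorem~\ref{thm-2.5}(4), which already yields that $\setof{ K^+ }{ K \in H(X) }$ is a clopen selector for $H(X)$, to the canonical condition: for every $K \in H(X)$ the clopen subspace $K^+$ of $H(X)$ is unitary with $\lastpt(K^+) = K$. Using Theorem~\ref{thm-2.5}(2), I would write $K = \bigcup_{i=1}^n U_{x_i}$ where $\Max(K) = \{x_1,\dots,x_n\}$ is a finite antichain of $X$; the canonical hypothesis on $X$ ensures that each $U_{x_i}$ is unitary with $\lastpt(U_{x_i}) = x_i$.

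The argument proceeds by well-founded induction on $K$ in $\pair{H(X)}{\subseteq}$, which is well-founded by Fact~\ref{Fact-1}(1) applied to the clopen selector of Theorem~\ref{thm-2.5}(4). The base case ($K$ is $\subseteq$-minimal in $H(X)$, which forces $K=\{x\}$ for some $x$ minimal in $X$) is trivial since $K^+ = \{K\}$. For the inductive step, the hypothesis supplies that $L^+$ is unitary with $\lastpt(L^+) = L$ for every $L \subsetneq K$ in $H(X)$; because each such $L^+$ is clopen in $H(X)$ and hence in $K^+$, this gives $\height_{K^+}(L) = \height_{L^+}(L) = \height(L^+)$, and identifies $L$ as the unique point of maximal height in $L^+$. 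It therefore suffices to establish the strict inequality $\height_{K^+}(K) > \height(L^+)$ for every $L \in K^+ \setminus \{K\}$, which designates $K$ as the unique point of maximal height in $K^+$.

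I would derive this strict inequality from the following \emph{Extension Property}: for every $L \in K^+ \setminus \{K\}$ and every basic neighborhood $V = U^+ \cap V_1^- \cap \cdots \cap V_m^-$ of $K$ in $K^+$, there exists $L' \in V$ with $L \subseteq L' \subsetneq K$. The natural candidate $L' = L \cup \bigcup_{j=1}^m \dd y_j$, with each $y_j$ chosen in the nonempty trace $V_j \cap K$, lies in $V$ and contains $L$ automatically; the delicate point is to arrange the choice so that $L' \subsetneq K$. The canonical hypothesis is decisive here: each $V_j \cap K$ is a nonempty clopen final subset of $K$, and whenever it meets some $U_{x_i}$ at the top $x_i$, then --- because $x_i$ is non-isolated in the unitary space $U_{x_i}$ when $U_{x_i}$ is nondegenerate --- it must also contain points strictly below $x_i$ inside $U_{x_i}$, providing enough flexibility to avoid a pre-chosen point of $K \setminus L$. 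The resulting $L'$ witnesses $\height_{K^+}(K) \geq \height(L^+)+1$, which closes the induction.

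The main obstacle is establishing the Extension Property in the regime where the principal ideals $U_{x_1},\dots,U_{x_n}$ at the maximal elements of $K$ are not pairwise disjoint: the clean product decomposition $K^+ \cong \prod_i U_{x_i}^+$ is available only in the tree-like canonical setting of Theorem~\ref{thm-5}, so one must work directly with $K^+$ as a sub-join-semilattice of $H(X)$ and track carefully how clopen final neighborhoods of $K$ interact with the canonical stratification of each $U_{x_i}$. This inductive construction simultaneously delivers the explicit ordinal formulas for $\height_{H(X)}$ recorded in the forthcoming Computation Rules~\ref{comment-4.5} and illustrated in Example~\ref{examplesa}.
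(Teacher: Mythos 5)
Your architecture is genuinely different from the paper's. The paper does not argue pointwise via neighborhoods of $K$ in $K^+$; it first proves the Hessenberg-sum formula $\height_{H(X)}(U_\sigma)=\bigoplus_{x\in\sigma}\height_{H(X)}(U_x)$ (Proposition~\ref{prop-4.3}), computes $\height_{H(X)}(U_x)$ explicitly as $0$, $1$ or $\omega^{\alpha}$ according to $\rank_X(x)$ (Theorem~\ref{thm-4.4}), and then derives the unitarity of each $U_\sigma^+$ from the strict monotonicity of $\oplus$, by induction on $|\sigma|$. Your remark that ``the clean product decomposition $K^+\cong\prod_i U_{x_i}^+$ is available only in the tree-like canonical setting'' is not accurate and hides the key trick you are missing: Lemma~\ref{lemma-4.2} disjointifies the ideals by setting $U_i'\eqdf U_{x_i}\setminus\bigcup_{j<i}U_{x_j}$ and shows $\height(U_i')=\height(U_{x_i})$ precisely because each $U_{x_i}$ is unitary with end-point $x_i$; after that, $H(U_\sigma)$ does decompose as a product of the spaces $H(U_i')\cup\{\emptyset\}$ and Telg\`arsky's Theorem~\ref{thm-3.3} applies in the general canonical setting. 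This quantitative route also yields Computation Rules~\ref{comment-4.5}, which your qualitative induction would not produce.

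More importantly, your Extension Property has a genuine gap exactly at the point you hedge with ``when $U_{x_i}$ is nondegenerate'', and it cannot be repaired as stated, because in the degenerate configuration the conclusion itself fails. Take $X=(\omega+1)\sqcup\{c\}$ with $\{c\}$ clopen, ordered by $n<\omega$ for $n\in\omega$ and $c$ incomparable to everything; the selector $U_n=\{n\}$, $U_c=\{c\}$, $U_\omega=\omega+1$ is canonical. Let $K=X$ and $L=\omega+1\in K^+\setminus\{K\}$. The set $V\eqdf\{c\}$ is clopen and final with $V\cap K=\{c\}$, so every $L'\in K^+\cap V^-$ contains $c$; since $K\setminus L=\{c\}$, there is no $L'$ with $L\subseteq L'\subsetneqq K$ lying in the neighborhood $K^+\cap V^-$ of $K$. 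Worse, by the paper's own Theorem~\ref{thm-4.4} and Proposition~\ref{prop-4.3} one gets $\height_{H(X)}(U_\omega)=1=1\oplus 0=\height_{H(X)}(X)$, so $K^+=H(X)$ has the two end-points $\omega+1$ and $X$ and is not unitary: the statement you are trying to prove fails for this $K$. (The paper's proof of Theorem~\ref{thm-4.1} stumbles on the same configuration: in its final display the only coordinate that drops satisfies $\height_{H(X)}(U_\rho\cap U_{s'})=\height_{H(X)}(\emptyset)=0=\height_{H(X)}(U_{s'})$, so $\oplus$ does not decrease strictly.) Any correct argument must handle separately the maximal elements of $K$ that are minimal in $X$, or the hypothesis must exclude them; your proposal, like the paper's proof, does not.
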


To prove the theorem, we need some preliminary facts. 
We fix a canonical selector $\mU \eqdf \setof{ U_x }{ x \in X }$ for  the Skula space $X$. 
So $U_x  \eqdf \dd x$. 
Also in what follows a ``finite and nonempty antichain'' of $X$ is abbreviated by ``antichain''. 

For a clopen selector $\mU$ of a Skula space $X$, $K \in H(X)$ and $x \in X$,

\begin{itemize}
\item[{\rm}]
$\height(K)$ denotes the height of the space $K$ (as compact subspace of $X$),  

\item[{\rm}]
$\height_{H(X)}(K)$ denotes the height of $K$ as element of $H(X)$, and 
\item[{\rm}]
$\rank_{X}(x)$ denotes the rank of $x$ as element of $X$, \dots 

\end{itemize}

\begin{lemma}
\label{lemma-4.2}
\begin{it}
Let $X$ be a canonical Skula space. 
Let $x \in X$ and $\sigma$ be an antichain of $X$ satisfying $x \not\in U_{\sigma}$. 
Then $\height(U_x \setminus U_{\sigma}) = \height(U_x)$.
\end{it}
\end{lemma}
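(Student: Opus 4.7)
My plan is to observe that $U_\sigma$ is clopen, which makes $U_x \setminus U_\sigma$ a clopen neighborhood of $x$, and then invoke the fact that the Cantor--Bendixson derivative is a local operation on open subspaces.

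First I would note that since $\sigma$ is a finite nonempty antichain and each $U_s$ ($s\in\sigma$) is clopen, the set $U_\sigma=\bigcup_{s\in\sigma}U_s$ is clopen in $X$. Because $x\notin U_\sigma$, the complement $X\setminus U_\sigma$ is a clopen neighborhood of $x$. Consequently
\[
Y\eqdf U_x\setminus U_\sigma \;=\; U_x\cap(X\setminus U_\sigma)
\]
is clopen in $X$, contains $x$, and is an open neighborhood of $x$ inside the clopen subspace $U_x$.

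Next I would record the standard locality property of the Cantor--Bendixson derivative: \emph{if $W$ is open in a topological space $Z$, then $W^{[\alpha]}=W\cap Z^{[\alpha]}$ for every ordinal $\alpha$} (proved by an easy transfinite induction; the key point is that for $y\in W$ the open neighborhoods of $y$ in $W$ are precisely the open neighborhoods of $y$ in $Z$ that happen to be contained in $W$, so the notion of ``isolated'' is the same). In particular, for every $y\in W$ one has $\height_W(y)=\height_Z(y)$.

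Applying this with $Z\eqdf U_x$ and $W\eqdf Y$, one obtains $\height_Y(x)=\height_{U_x}(x)$. Since $X$ is canonical Skula, $U_x$ is unitary with $\lastpt(U_x)=x$, so $\height_{U_x}(x)=\height(U_x)$; hence $\height(Y)\ge\height_Y(x)=\height(U_x)$. The reverse inequality $\height(Y)\le\height(U_x)$ follows again from locality: for every $y\in Y$, $\height_Y(y)=\height_{U_x}(y)\le\height(U_x)$. Combining the two inequalities yields $\height(U_x\setminus U_\sigma)=\height(U_x)$, as desired. There is no real obstacle here: the only substantive ingredient is the locality of $(\cdot)^{[\alpha]}$ on open subspaces, and the hypothesis ``$\sigma$ finite antichain with $x\notin U_\sigma$'' is used precisely to guarantee that $U_\sigma$ is clopen and misses $x$, so that the removed set sits entirely away from a full neighborhood of $x$ and therefore does not affect the local derived-set structure at $x$.
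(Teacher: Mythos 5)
Your proof is correct, and it takes a noticeably different route from the paper's. The paper first invokes the fact that a closed initial subset of a Skula space is finitely generated (Theorem~\ref{thm-2.5}(2)) to write $U_x \cap U_\sigma = U_\tau$ for a finite antichain $\tau \subseteq U_x$ with $x \notin U_\tau$; it then argues that, $U_x$ being unitary with end-point $x$, the clopen set $U_\tau$ has $\height(U_\tau) < \height(U_x)$, and concludes from the decomposition of $U_x$ into the disjoint clopen pieces $U_x \setminus U_\sigma$ and $U_\tau$ that the first piece must carry the full height. You instead work entirely on the surviving piece: since $U_\sigma$ is clopen (a finite union of members of the selector) and misses $x$, the set $Y \eqdf U_x \setminus U_\sigma$ is a clopen neighborhood of $x$ in $U_x$, and the locality of the Cantor--Bendixson derivative on open subspaces ($W^{[\alpha]} = W \cap Z^{[\alpha]}$ for $W$ open in $Z$) gives $\height_Y(x) = \height_{U_x}(x) = \height(U_x)$ directly, with the reverse inequality trivial. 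Your argument is slightly more economical --- it bypasses the finitely-generated structure of $U_x \cap U_\sigma$ entirely and uses only that the removed set is clopen and avoids the end-point --- while the paper's version keeps the order-theoretic decomposition in view, which is the form it reuses in the surrounding computations (Proposition~\ref{prop-4.3} and Theorem~\ref{thm-4.4}). Both hinge on the same underlying fact, namely that clopen subspaces do not alter local derived-set structure; you make that the explicit engine of the proof, whereas the paper uses it implicitly twice (once for $\height(U_\tau) < \height(U_x)$ and once for the maximum over a clopen partition).
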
 

\begin{proof}
There is an antichain $\tau \subseteq U_x$ such  that 
$U_x \cap U_{\sigma} = U_{\tau}$. 
Hence $U_z \subsetneqq U_x$ for $z \in \tau$.
Since $U_x$ is unitary with end-point $x$ and $\tau$ is finite with $x \not\in U_\tau$, we have $\height(U_{\tau}) < \height(U_x)$. 
Hence $\height(U_x \setminus U_{\sigma}) = \height(U_x)$. 
\end{proof}

\begin{proposition}
\label{prop-4.3}
\begin{it} 
Let $X$ be a canonical Skula space,  
and let $\sigma$ be an antichain of $X$ such that $|\sigma| \geq 2$. 
Then 
\newline
\centerline{
$\height_{H(X)}(U_\sigma) = \bigoplus_{x \in \sigma} \height_{H(X)}(U_x)$.
}
\end{it}
\end{proposition}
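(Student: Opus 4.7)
My plan is to reduce to Telg\`arsky's theorem (Theorem~\ref{thm-3.3}) by identifying a natural subspace of $H(U_\sigma) = U_\sigma^+$ (a clopen subspace of $H(X)$) around $U_\sigma$ with a product of subspaces of the $H(U_x)$'s around the $U_x$'s. First, set $C := \bigcup_{x \neq y \in \sigma}(U_x \cap U_y)$, which is a closed (hence clopen, by Theorem~\ref{thm-2.5}(2)) initial subset of $U_\sigma$; for each $x \in \sigma$ set $Q_x := U_x \cap C$, a clopen initial subset of $U_x$ not containing $x$ (because $\sigma$ is an antichain). I would consider
\[
\phi\colon \{K \in H(U_\sigma) : K \supseteq C\} \longrightarrow \prod_{x \in \sigma}\{A \in H(U_x) : A \supseteq Q_x\}, \quad K \mapsto (K \cap U_x)_{x \in \sigma}.
\]
Because $A_y \cap U_x \subseteq U_x \cap U_y \subseteq Q_x \subseteq A_x$ for $y \neq x$, the union $\bigcup_x A_x$ satisfies $(\bigcup_x A_x) \cap U_x = A_x$, so $\phi$ is a bijection with inverse $(A_x)_x \mapsto \bigcup_x A_x$.

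Next I would verify that $\phi$ is a homeomorphism. The decisive topological fact is: \emph{if $V \subseteq U_x$ is clopen initial with $V \supseteq Q_x$, then $U_x \setminus V$ is clopen final in $U_\sigma$}. Indeed, any $z \in U_x \setminus V$ lies outside $C$ (since $V \supseteq Q_x$), so any $z' \in U_\sigma$ with $z' > z$ must lie in $U_x$ (else $z \in U_x \cap U_y \subseteq C$ for some $y \neq x$), and then $z' \notin V$ by initiality of $V$. This makes $(U_x \setminus V)^-$ a basic open subset of $H(U_\sigma)$, and matching the Priestley subbases on both sides shows that $\phi$ and $\phi^{-1}$ both send subbasic opens to open sets.

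Finally I would match Cantor--Bendixson heights at the tops. Lemma~\ref{lemma-4.2} gives the crucial leverage: for each $x \in \sigma$ the pure part $U_x \setminus U_{\sigma \setminus \{x\}}$ has full height $\height(U_x)$, so every CB-approach to $U_x$ in $H(U_x)$ can be realized by sets of the form (approach in the pure part)$\,\cup\, Q_x$, which lie in $\{A \supseteq Q_x\}$; this gives $\height_{\{A \supseteq Q_x\}}(U_x) = \height_{H(X)}(U_x)$. Similarly, the continuous retraction $\mu\colon K \mapsto K \cup C$ converts any approach to $U_\sigma$ in $H(U_\sigma)$ into one inside $\{K \supseteq C\}$ of the same CB-height (again using Lemma~\ref{lemma-4.2}), whence $\height_{\{K \supseteq C\}}(U_\sigma) = \height_{H(X)}(U_\sigma)$. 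The homeomorphism $\phi$ together with Telg\`arsky's theorem (Theorem~\ref{thm-3.3}) applied to the product then yields
\[
\height_{H(X)}(U_\sigma) \;=\; \bigoplus_{x \in \sigma} \height_{\{A \supseteq Q_x\}}(U_x) \;=\; \bigoplus_{x \in \sigma} \height_{H(X)}(U_x).
\]

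The main obstacle is precisely this last step: the subspaces $\{K \supseteq C\}$ and $\{A \supseteq Q_x\}$ are, a priori, only closed (not clopen) in their ambient hyperspaces, and CB-height need not be preserved under passage to closed subspaces. The point is that Lemma~\ref{lemma-4.2} guarantees enough room in the pure parts to realize every CB-approach while simultaneously containing $C$ (respectively $Q_x$), so the subspace heights coincide with the ambient heights at the relevant tops.
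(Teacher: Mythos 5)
Your strategy is essentially the paper's: identify (a height\mbox{-}equivalent piece of) $H(U_\sigma)$ with a finite product, apply Telg\`arsky's theorem, and use Lemma~\ref{lemma-4.2} to argue that stripping the overlaps off each $U_x$ costs nothing in height. The only real difference is how the overlaps are disposed of. The paper fixes an enumeration $\sigma=\{x_0,\dots,x_{n-1}\}$, disjointifies $U_\sigma$ into the clopen pieces $U'_{x_i}=U_{x_i}\setminus\bigcup_{j<i}U_{x_j}$, and maps all of $H(U_\sigma)$ onto $\bigl(\prod_{x\in\sigma}(H(U'_x)\cup\{\emptyset\})\bigr)\setminus\{\vec{\emptyset}\}$ via $K\mapsto (K\cap U'_x)_{x\in\sigma}$; you instead keep each $U_x$ whole and cut the hyperspaces down to the closed subspaces $\{K\supseteq C\}$ and $\{A\supseteq Q_x\}$. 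Your version is enumeration\mbox{-}free, and your verification that $\phi$ is a bijective homeomorphism (in particular the observation that $U_x\setminus V$ is clopen final in $U_\sigma$ whenever $V$ is clopen initial in $U_x$ with $V\supseteq Q_x$) is correct and arguably more transparent than the paper's terse claim that $f^{\sigma}$ is an onto order\mbox{-}isomorphism and homeomorphism.

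The price is the step you yourself flag, and there you assert rather than prove. From $\height(U_x\setminus C)=\height(U_x)$ (Lemma~\ref{lemma-4.2}) it does not formally follow that $\height_{\{A\supseteq Q_x\}}(U_x)=\height_{H(U_x)}(U_x)$: since $\{A\supseteq Q_x\}$ is homeomorphic to $H(U_x\setminus Q_x)\cup\{\emptyset\}$, what you actually need is that the Cantor--Bendixson height of the top of the hyperspace of a unitary canonical piece depends only on the height of that piece --- and that is the content of Theorem~\ref{thm-4.4}, which is proved afterwards \emph{using} Proposition~\ref{prop-4.3}. Likewise, the fact that the retraction $K\mapsto K\cup C$ has a singleton fibre over $U_\sigma$ does not by itself give $\height_{\{K\supseteq C\}}(U_\sigma)=\height_{H(U_\sigma)}(U_\sigma)$: a continuous retraction of compact scattered spaces with a one\mbox{-}point fibre over a fixed point can still strictly lower the height there (retract $\omega+1$ onto $\{0,\omega\}$). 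To be fair, the paper's own proof makes exactly the same leap, passing from $\height(U'_x)=\height(U_x)$ to $\height_{H(X)}(U'_x)=\height_{H(X)}(U_x)$ without comment, so your argument is at the same level of rigour as the original. To make either airtight one should run Proposition~\ref{prop-4.3} and Theorem~\ref{thm-4.4} as a simultaneous induction on $\max_{x\in\sigma}\rank_X(x)$, or prove separately that deleting from a unitary canonical piece a clopen initial set missing its end\mbox{-}point leaves the hyperspace height at the top unchanged.
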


\begin{proof} 
Let $\seqnn{ x_i }{ i<n }$ be an enumeration of $\sigma$ and for $i<n$ we set $U'_i = U_i \setminus \bigcup_{j<i} U_j$.
So for distinct $x, y \in \sigma$ we have $U'_x \cap U'_y = \emptyset$.   
By Lemma~\ref{lemma-4.2} $\height(U'_x) = \height(U_x)$. 
We define a map 
$$
\fnn{ f^{\sigma} \ }{ \ H(U_{\sigma} ) \ } \, \bigl(  
{ \  \mbox{$\prod$}_{x \in \sigma} \, H(U'_x) \cup \{ \emptyset \} \, \bigr)
\setminus \{ \vec \emptyset \} \  }
$$ 
where $ \vec \emptyset $ is the constant sequence  $\emptyset$, as follows. 
For $ K \in H(U_{\sigma}) $ we set  
$$ 
f^{\sigma}(K) = \seqnn{ K \cap U'_x }{ x \in \sigma }  \, . 
$$  
We allow
$K \cap U'_{x}  = \emptyset$. 
By convention, we set 
$\height(\emptyset) = 0 = \height_{H(X)}(\emptyset)$.

The map $f^{\sigma}$ is an order-isomorphism and onto, i.e. $K \subseteq L$ if and only if $K \cap U'_x \subseteq L \cap U'_x$ 
for each $x \in \sigma$ and a homeomorphism since $f^{\sigma}$ preserves infimum.    
By Telg\`arsky Theorem~\ref{thm-3.3}, 
$\height_{U_x'{\times}U_y'} (\pair{x'}{y'}) = \height_{U_x'} (x') \oplus \height_{U_y'} (y')$
for every $\pair{x'}{y'} \in U_x'{\times}U_y'$. 
Therefore 
 \begin{eqnarray}
\height_{H(X)}(U_\sigma) 
&
= 
&
\height_{\prod_{x \in \sigma} H(U'_x)}\seqnn{f^{\sigma}(U'_x)}{x\in\sigma}\nonumber
\\
&
=
&
\hbox{$\bigoplus$}_{x \in \sigma} \, \height_{H(X)}(U'_x) 
\ = \ \hbox{$\bigoplus$}_{x \in \sigma} \, \height_{H(X)}(U_x).
\nonumber
\end{eqnarray}
We remark that $\seqnn{K \cap U'_x}{x\in\sigma}$ depends on the enumeration $\seqnn{ x_i }{ i<n }$ of $\sigma$ but 
$\hbox{$\bigoplus$}_{x \in \sigma} \, \height_{H(X)}(U'_{x})$ does not depend of the enumeration of $\sigma$. 
\end{proof}

\begin{theorem}
\label{thm-4.4} 
\begin{it}
Let $\mU \eqdf \setof{ U_x }{ x \in X }$ be a canonical selector for $X$ and let $x \in X$. 
Then 
\smallskip
\newline
\centerline{
\smallskip
$\rank_X(x) = \height_X(x) = \height(U_x)$  and 
}
\begin{itemize}
\item[{\rm(1)}]
If $\rank_X(x) = 0$ then $\height_{H(X)}(U_x) = 0$. 
\item[{\rm(2)}]
If $\rank_X(x) = 1$ then $\height_{H(X)}(U_x) = 1$. 
\item[{\rm(3)}] 
If $\rank_X(x) = 1{+}\alpha \geq 2$ then 
$\height_{H(X)}(U_x) = \omega^{\alpha}$. 
\end{itemize}
Moreover for any $x \in X$ the subspace $H(U_x) = U_x^+$ of $H(X)$ is unitary and $\lastpt( H(U_x) ) = U_x$.  
\end{it}
\end{theorem}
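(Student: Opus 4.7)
The three equalities $\rank_X(x)=\height_X(x)=\height(U_x)$ are immediate: the first holds by Fact~\ref{Fact-2} since $\mU$ is a canonical selector, while $U_x$ is a clopen subspace of $X$ (forcing $\height_{U_x}(x)=\height_X(x)$) and unitary with end-point $x$ (forcing $\height(U_x)=\height_{U_x}(x)$). I prove (1)--(3) together with the ``moreover'' clause by transfinite induction on $\rho=\rank_X(x)$.

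If $\rho=0$ then $U_x=\{x\}$ and $H(U_x)=\{U_x\}$, so everything is trivial. If $\rho=1$, unitarity of $U_x$ with end-point $x$ together with $\height(U_x)=1$ force $U_x=\{x\}\cup M$ where $M$ is an infinite antichain of minimal elements (were $M$ finite, $U_x$ would be finite hence discrete). Then $H(U_x)\setminus\{U_x\}$ is exactly the family of finite nonempty subsets of $M$; each such $K=\{a_1,\dots,a_k\}$ is isolated in $H(U_x)$ via the basic open set $K^+\cap\bigcap_{i=1}^k W_i^-$ with $W_i\eqdf\{x\}\cup(M\setminus\{a_j:j\neq i\})$ (a clopen final subset of $U_x$ meeting $K$ only in $a_i$), while every basic neighborhood of $U_x$, of the form $W^-$ with $W=\{x\}\cup(M\setminus F)$ for finite $F\subseteq M$, meets infinitely many finite subsets of $M$. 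Consequently $\height_{H(U_x)}(U_x)=1$ and $H(U_x)$ is unitary with end-point $U_x$.

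For the inductive step take $\rho=1+\alpha$ with $\alpha\geq 1$, and assume the statement for every $y<x$. By Theorem~\ref{thm-2.5}(2) every $K\in H(U_x)\setminus\{U_x\}$ equals $U_\sigma$ for some antichain $\sigma\subseteq U_x\setminus\{x\}$; Proposition~\ref{prop-4.3} and the inductive hypothesis on each $y\in\sigma$ give $\height_{H(X)}(U_\sigma)=\bigoplus_{y\in\sigma}\height_{H(X)}(U_y)$, and each summand is strictly less than $\omega^\alpha$ (the inductive hypothesis on the three subcases $\rank_X(y)=0,\;1,\;1+\beta\geq 2$ with $\beta<\alpha$ produces respectively the values $0,\;1,\;\omega^\beta$, all below $\omega^\alpha$). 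Closure of $\omega^\alpha$ under finite Hessenberg sums (property~(v)) yields $\height_{H(X)}(U_\sigma)<\omega^\alpha$, which gives at once the upper bound $\height_{H(U_x)}(U_x)\leq\omega^\alpha$; once equality is established, the same bound shows $U_x$ is the unique element of $H(U_x)$ of height $\omega^\alpha$, so $H(U_x)$ is unitary with end-point $U_x$.

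For the matching lower bound fix $\beta<\omega^\alpha$ and a basic neighborhood $\mN=\bigcap_{i=1}^n(U_x\setminus U_{\tau_i})^-$ of $U_x$. The set $W\eqdf U_x\setminus\bigcup_{i=1}^n U_{\tau_i}$ is a clopen neighborhood of $x$ in $U_x$, so $W$ remains unitary with end-point $x$ of height $1+\alpha$; in particular, for every $\gamma<1+\alpha$ the point $x$ is a limit in $W$ of infinitely many points of rank $\geq\gamma$. I construct an antichain $\sigma\subseteq W\setminus\{x\}$ with $\bigoplus_{y\in\sigma}\height_{H(X)}(U_y)\geq\beta$ by splitting cases: if $\alpha=\alpha'+1$ is a successor, select $k$ distinct points $y_1,\dots,y_k\in W\setminus\{x\}$ of rank exactly $1+\alpha'$ (pairwise incomparable because they share a common rank, and accumulating at $x$ inside $W$ since $x$ is the unique point of $W$ of higher rank) with $k\cdot\omega^{\alpha'}>\beta$; if $\alpha$ is a limit ordinal, pick $\gamma<\alpha$ with $\omega^\gamma>\beta$ and let $\sigma=\{y\}$ for a single $y\in W\setminus\{x\}$ of rank $\geq 1+\gamma$. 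Since $\sigma\subseteq W$, each $y\in\sigma$ lies outside every $U_{\tau_i}$, so $U_\sigma\not\subseteq U_{\tau_i}$ for all $i$, whence $U_\sigma\in\mN\setminus\{U_x\}$ and Proposition~\ref{prop-4.3} gives $\height_{H(X)}(U_\sigma)\geq\beta$. This shows $U_x\in(H(U_x))^{[\omega^\alpha]}$ and completes the induction. The main obstacle is precisely this last construction: one must keep $\sigma$ inside the prescribed $\mN$ while pushing $\bigoplus_{y\in\sigma}\height_{H(X)}(U_y)$ arbitrarily close to $\omega^\alpha$, which depends crucially on the clopen neighborhood $W$ of $x$ inheriting unitarity of the full height $1+\alpha$.
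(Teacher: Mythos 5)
Your proof is correct and follows essentially the same route as the paper's: transfinite induction on $\rank_X(x)$, the case $\rank_X(x)=1$ handled by identifying $H(U_x)$ with a one-point compactification of a discrete space, and the general case handled by combining Proposition~\ref{prop-4.3} with Hessenberg arithmetic (every proper $K\in H(U_x)$ is some $U_\sigma$ of height $<\omega^\alpha$, while antichains of points of rank approaching $1+\alpha$ force the height of $U_x$ up to $\omega^\alpha$). The only difference is that you spell out, via the clopen set $W=U_x\setminus\bigcup_i U_{\tau_i}$ and the successor/limit split on $\alpha$, the lower-bound accumulation argument that the paper dismisses as ``easy to check''; this is a welcome amplification, not a divergence.
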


\begin{proof}
Since each $U_x$ is unitary with end-point $x$, 
by Fact~\ref{Fact-2}, 
$\rank_X(x) = \height_X(x) = \height(U_x)$.
So we compute only $\height_{H(X)}(U_x)$ by induction on $\rank_X(x) = \height_X(x)$. 

We need to recall that the Hessenberg product $\odot$ of ordinals is defined as follows (see \cite{AB2}) . 
Let $\alpha$ and $\beta$ be ordinals. 
We set  
\begin{itemize}
\item[]
\qquad\qquad
$\alpha \odot 0 = \alpha$,
\item[]
\qquad\qquad
$\alpha \odot (\beta+1) = (\alpha \odot \beta) \oplus \beta$ and 
\item[]
\qquad\qquad
$\alpha \odot \beta = \sup_{\gamma<\beta} \alpha \odot \gamma$ for a limit~$\beta$. 
\end{itemize}
Operation $\odot$ is not the same as the Hessenberg multiplication $\alpha \otimes \beta$ which is obtained from the normal forms of $\alpha$ and $\beta$ viewed as polynomials and multiplied
accordingly. In particular, $\alpha \otimes \beta$ is commutative, but $\odot$ is not.  
For instance 
\begin{itemize}
\item[{}]
\qquad\qquad
$\omega \otimes 2 = 2 \otimes \omega = \omega+\omega$
\item[{\rm}]
\qquad\qquad
$\omega \odot 2 = \omega \oplus \omega = \omega + \omega$ and 
$2 \odot \omega = \sup_{n<\omega} 2 \odot n = \omega$. 
\end{itemize}
The function $\alpha \odot \beta$ is strictly increasing in the right variable, continuous in the right variable, and non-decreasing in the left variable. 
Obviously 
\begin{itemize}
\item[{\rm}]
\qquad\qquad
If \ $n \in \omega$ \  then \ 
$\omega^\alpha \cdot n \ = \ \omega^\alpha \odot n \ = \ \omega^\alpha \otimes n$. 
\end{itemize}
It is easy to check that 
$\alpha \cdot \beta \leq \alpha \odot \beta \leq \alpha \otimes \beta$ where $\alpha \cdot \beta$ is the usual operation.

\medskip

\noindent
{\it Case $1$.}
\begin{it}%
$\rank_X(x) = 0$. 
\end{it}%
So $ U_x = \{ x \} $ and thus $\height_{H(X)}(U_x) =  0$. 
Obviously $H(U_x) = \{ x \}$ and thus $H(U_x)$ is unitary with $\lastpt(H(U_x)) = \{ x \} = U_x$. 
\medskip

\noindent
{\it Case $2$.}
\begin{it}%
$\rank_X(x) = 1$. 
\end{it}%
The set $U_x \setminus \{x\}$ is infinite and discrete. 
Recall that $H(U_x)$ is the set of all nonempty initial subsets of $U_x$, considered as subspace of $X$.
It is easy to see that the set 
of all nonempty antichains contained in $U_x \cap \Min(X)$ is the set of isolated points of $H(U_x)$. 
Hence $H(U_x)$ is homeomorphic to the one-point compactification of a discrete space and $\lastpt(H(U_x)) = U_x$.  Therefore $\height_{H(X)}(U_x) = 1$. 

\medskip 

\noindent
{\it Case $3$.}
\begin{it}%
$\rank_X(x) = 1 + \alpha$ with $\alpha = 1$.
\end{it}%
The set 
\smallskip
\\
\smallskip
\centerline{ 
$S_1 \eqdf \setof{ y \in X }{ y<x \text{ and } \rank_X(y) = 1 }
\ = \ \setof{ y \in U_x \setminus \{x\} }{ \rank_X(y) = 1 \, }$ 
}
is infinite. 
In particular if $y < x$ and  $y \not\in S_1$ then $\rank_X(y) = 0$.
Let $\sigma$ be an antichain of $U_x$ such that $x \not\in \sigma$. 
Since $\bigcup_{y\in\sigma} U_y \eqdf U_\sigma \subseteq U_x$, by Proposition~\ref{prop-4.3},  
 \begin{eqnarray}
\height_{H(X)}(U_\sigma) 
&
= 
&
\mbox{$\bigoplus$}_{y\in \sigma \cap S_1} \, \height_{H(X)}(U_y) 
\oplus 
\mbox{$\bigoplus$}_{y\in \sigma \setminus S_1}  \, \height_{H(X)}(U_y) 
\nonumber
\\
&
=
&
1 \odot | \sigma \cap S_1 | \oplus 0 \ = \ | \sigma \cap S_1 |.
\nonumber
\end{eqnarray}
In particular if $\sigma$ is an antichain contained in $S_1$ 
then  
\smallskip
\\
\smallskip
\centerline{
$\height_{H(X)}(U_\sigma)  \ =  \ \bigoplus_{y\in\sigma} \height_{H(X)}(U_y) = 1 \odot |\sigma|  \ = \  |\sigma|.$
}
Now, since $S_1$ is infinite, 
it is easy to see that $\height_{H(X)}(U_x) = \omega \eqdf \omega^1$. 
Obviously $H(U_x)$  is unitary and $\lastpt(H(U_x)) = U_x$.

\medskip

\noindent
{\it Case $4$.}
\begin{it}%
$\rank_X(x) = 1+\alpha$ with $\alpha \geq 2$. 
\end{it}%
Fix $\beta < \alpha$ with $\beta \geq 1$. 
The set 
\smallskip
\\
\smallskip
\centerline{
$S_\beta = \setof{ y \in X }{ y<x \text{ and } \rank_X(y) = \beta } 
\ = \ \setof{ y \in U_x \setminus \{x\} }{ \rank_X(y) = \beta \, }$ 
} 
is infinite. 
Now let $\sigma$ be an antichain contained in $S_\beta$ and 
$U_\sigma \eqdf \bigcup_{y\in\sigma} U_y$\,. 
So $U_\sigma \subseteq U_x \setminus \{x\}$. 
Remark that for every $y \in \sigma$
\smallskip
\\
\smallskip
\centerline{
$\beta = \rank_X(y) \leq \gamma 
\eqdf \max \setof{ \rank_X(z) }{ z \in \sigma } < \alpha$.
} 
For every $y \in S_\beta$,
we have   
$\beta = \rank_X(y) \leq \gamma < \alpha$. 
Moreover by Fact~\ref{Fact-2}, 
$\rank_X(y) = \height_X(y) = \height(U_y)$.  
In other words, $\beta = \height(U_y) \leq \gamma$. 

Fix $y \in S_\beta$. 
Since $1+\beta \geq 2$, by the induction hypotheses, we have 
$\height_{H(X)}(U_y) = \omega^\beta$. 
(Note that if $ 1 + \beta =\alpha = 2$ then by Case~3, 
$\height_{H(X)}(U_y) = \omega \eqdf \omega^1 = \omega^\beta$.) 
As in Case~3, by Proposition~\ref{prop-4.3}, 
\smallskip
\\
\smallskip
\centerline{
$\height_{H(X)}(U_\sigma)  \ 
=  \ \bigoplus_{y\in\sigma} \height_{H(X)}(U_y) 
=  \ \omega^{\beta} \odot |\sigma|  \ 
=  \ \omega^{\beta} \cdot |\sigma|  \ 
<  \ \omega^{\alpha}$. 
} 
Now since $S_\beta$ is infinite, $\beta$ is any ordinal (strictly) less than $\alpha$ and $|\sigma|$ is arbitrary, it is easy to check that 
\smallskip
\\
\smallskip
\centerline{
$\height_{H(X)}(U_x) 
= \sup \setof{ \, \omega^{\beta} \cdot m \, }{ \, \beta < \alpha \text{ and } m \in \omega \, } = \omega^{\alpha}$.
} 
So $H(U_x)$ is unitary and 
$\lastpt(H(U_x)) = U_x$ because $\height_{H(X)}(U_\sigma) < \omega^{\alpha} = \height_{H(X)}(U_x)$ 
for any finite antichain $\sigma$ of $U_x$ satisfying $U_\sigma \subsetneqq U_x$.

\smallskip

We have proved Theorem~\ref{thm-4.4}.
\end{proof} 

\begin{proof}[Proof of Theorem~{\rm\ref{thm-4.1}}]
By Theorem~\ref{thm-2.5}(4), the set of 
$U^+ \eqdf \setof{ K\in H(X) }{ K\subseteq U }$, where $U$ is any clopen initial subset of $X$, defines a clopen selector for $H(X)$. 
Therefore it suffices to prove that each $U^+$ is unitary and clopen subset of $H(X)$ with $\lastpt(U^+) = U$.    

We fix a closed (equivalently clopen) initial subsets  $U$ and $V$ of $X$. 
So $U = U_{\sigma}$ and $V = U_\rho$ where $\sigma$ and $\rho$ are antichains of $X$.  
Assume that $U_\rho \subsetneqq U_\sigma$. 
It suffices to show that  
\smallskip
\newline
($\star$) \hfill  $\height(U_\rho) < \height(U_\sigma)$.
\hfill \phantom{($\star$)}
\smallskip
\newline
We show ($\star$) by induction on $|\sigma|$.
If $|\sigma| = 1$, then we are done by Theorem~\ref{thm-4.4}.  
Next assume that $ |\sigma| \geq 2$. 
Notice that 
$U_\rho = U_\rho \cap U_\sigma = \bigcup_{s\in\sigma} (U_\rho \cap U_s)$ and that, by compactness, each $U_\rho \cap U_s$ is a finite union of $U_z$'s. 
For any $s \in \sigma$ we have $U_\rho \cap U_s \subseteq U_s$. 
By Theorem~\ref{thm-4.4},
$\height(U_\rho \cap U_s) \leq \height(U_s)$ for $s \in \sigma$. 

Since $U_\rho \subsetneqq U_\sigma$, there is $s' \in \sigma$ such that $U_{\rho} \cap U_{s'} \subsetneqq U_{s'}$. 
Again, by Theorem~\ref{thm-4.4}, $\height(U_{\rho} \cap U_{s'}) < \height(U_{s'})$. 
But the function $\pair{\alpha}{\beta} \mapsto \alpha \oplus \beta$ is strictly increasing in both arguments. 
Therefore by Proposition~\ref{prop-4.3},
\smallskip
\\
\smallskip
\centerline{
$\height(U_\rho) 
= \bigoplus_{s\in\sigma} \height(U_\rho \cap U_s) 
< \bigoplus_{s\in\sigma} \height(U_s) 
= \height(U_\sigma)$.
}
This ends the proof of Theorem~\ref{thm-4.1}.
\end{proof}

\begin{algorithm}
\label{comment-4.5}
\begin{rm}
For a nonempty antichain $\sigma$ of $X$, by Theorem~\ref{thm-4.1}, the subspace $H(U_\sigma) = U_\sigma{}^+$ of $H(X)$ is unitary with 
$\lastpt( H(U_{\sigma}) ) = U_{\sigma}$, and    
we can calculate $\height_{H(X)}(U_\sigma)$: 
\begin{itemize}
\item[-]
by Proposition~\ref{prop-4.3} we have 
$\height_{H(X)}(U_\sigma) = \bigoplus_{x \in \sigma} \height_{H(X)}(U_x)$, and
\item[-]
by Theorem~\ref{thm-4.4} we know 
$\height_{H(X)}(U_x)$ in function of $\height_{X}(x)$ for any $x \in X$. 
\end{itemize}
Such a calculation appears in Example~\ref{examplesa}.
\hfill$\blacksquare$  
\end{rm}
\end{algorithm} 

Next we develop properties of tree-like canonical Skula spaces: 
Proposition~\ref{prop-4.6},
 (i)$\Leftrightarrow$(ii) was proved U. Abraham and R. Bonnet~\cite{AB1} and 
 (ii)$\Leftrightarrow$(iii) is due to R. Bonnet and H. SiKaddour \cite[\S 2.4 and \S2.6]{bonnet}. 
 
The ``Moreover'' is due to M. Rubin: 
we recall that a topological space $Z$ is \dfn{retractable} whenever every nonempty closed set $F$ of $Z$ is a retract, i.e. there is a continuous map $\fnn{ f }{ Z }{ F }$ such that $f {\restriction} F$ is the identity on $F$. 

For completeness we give a sketch of the proof of the next result, simplifying the techonology of the original proof.

\begin{proposition}
\label{prop-4.6}
Let $X$ be a compact space.
The following are equivalent.

\begin{itemize}
\item[{\rm(i)}]
$X$ is a scattered space and
$X$ is a continuous image of a $0$-dimensional complete linear ordered space $\pair{X}{\leq}$ endowed with the order  topology.
\item[{\rm(ii)}]
$X$ is a continuous image of a successor ordinal $\alpha+1$
(endowed with the order topology).
\item[{\rm(iii)}]
$X$ has a tree-like canonical clopen selector.
\end{itemize} 
Moreover if $X$ is a continuous image of a successor ordinal then $X$ is retractable. 
\end{proposition}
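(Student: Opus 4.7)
The plan is to prove the cycle (ii)$\Ra$(i)$\Ra$(iii)$\Ra$(ii), then handle the moreover. The implication (ii)$\Ra$(i) is immediate: every successor ordinal $\alpha+1$ with its order topology is a compact $0$-dimensional complete linearly ordered scattered space, and continuous images of compact scattered spaces are scattered.

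For (i)$\Ra$(iii), start with a continuous surjection $\fnn{f}{L}{X}$ where $L$ is a $0$-dimensional complete LOTS. For each $x \in X$ set $\ell_x = \min f^{-1}(x)$, well-defined by completeness of $L$ and compactness of the fibers, and set $U_x = f([0,\ell_x]) = \setof{y \in X}{\ell_y \leq \ell_x}$. Three properties must be verified: (a) each $U_x$ is clopen in $X$; (b) $U_x$ is unitary with $\lastpt(U_x) = x$; (c) the family is tree-like. Property (c) is immediate, since $x \mapsto \ell_x$ injects $X$ into $L$ and $U_x \subseteq U_y$ iff $\ell_x \leq \ell_y$, so the $U_x$'s are in fact pairwise comparable (stronger than tree-like). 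For (b), $x = f(\ell_x)$ is the last point of $L$ mapping to a fresh element of $U_x$, and a CB-rank computation inside $U_x$ shows $x$ is its unique endpoint. For (a), closedness is clear from compactness of $[0,\ell_x]$ and continuity of $f$; openness is the main technical step, requiring that $\setof{y \in X}{\ell_y > \ell_x}$ be closed, which one deduces by combining that $L$ has a clopen base of intervals with scatteredness of $X$ to control the quotient behaviour.

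For (iii)$\Ra$(ii), endow $X$ with a compatible linear order so that it becomes a compact $0$-dimensional scattered LOTS, whence by the classical Mazurkiewicz--Sierpi\'nski theorem it is homeomorphic to some $\alpha+1$. Build the order by a depth-first transfinite traversal of the tree $\{U_x : x \in X\}$. Observe first that the set $M$ of maximal elements of $X$ is closed, since $X \setminus M = \bigcup_{y \in X} (U_y \setminus \{y\})$ is open; moreover $\{U_m : m \in M\}$ is a pairwise disjoint clopen cover of $X$ by tree-likeness, so by compactness $M$ is finite. Well-order $M$ as $m_0 < \cdots < m_{k-1}$, recursively linearly order each smaller tree-like canonical Skula space $U_{m_i} \setminus \{m_i\}$, and place $m_i$ at the top of its block. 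Scatteredness ensures the recursion terminates. The key verification is that the clopen initial segments of the new linear order are precisely the finite disjoint unions of the $U_y$'s (cf.\ Theorem~\ref{thm-2.5}(2)), so the order topology coincides with the original topology on $X$.

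For the moreover, given a continuous surjection $\fnn{f}{\alpha+1}{X}$ and a closed $F \subseteq X$, set $F^\star = f^{-1}(F)$, a closed subset of $\alpha+1$. The ordinal space retracts onto $F^\star$ via $\sigma(\gamma) = \min(F^\star \cap [\gamma, \alpha])$ when this intersection is nonempty and $\sigma(\gamma) = \max F^\star$ otherwise, which is continuous. One then defines $r : X \to F$ by $r(x) = f(\sigma(\beta(x)))$ for a suitable ``section'' $\beta : X \to \alpha+1$ of $f$, obtaining $r|_F = \mathrm{id}_F$ because $\beta(x) \in F^\star$ whenever $x \in F$. The principal obstacle is continuity of $r$: since the naive choice $\beta(x) = \min f^{-1}(x)$ is typically not continuous, one argues by factoring $r$ through the quotient map $f$, using the well-ordering of $\alpha+1$ and the closedness of fibers to show that $f \circ \sigma$ descends to a continuous map on $X$. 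Together with the clopenness argument in (i)$\Ra$(iii), this descent is the main delicate point I expect to have to work through in detail.
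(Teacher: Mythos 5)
Your implication (ii)$\Rightarrow$(i) is fine, but both of the constructions that carry the real weight of your argument break down. In (i)$\Rightarrow$(iii), the family $U_x = f([0,\ell_x])$ with $\ell_x = \min f^{-1}(x)$ fails on two separate counts. First, $U_x$ need not be open: take $L=[0,\omega]$ and let $X=\{x\}\cup\setof{y_n}{n\geq 1}$ be a convergent sequence with limit $x$, with $f(0)=f(\omega)=x$ and $f(n)=y_n$; this $f$ is continuous, $\ell_x=0$, and $U_x=\{x\}$ is not open. Scatteredness of $X$ does not rescue this. Second, even when $U_x$ is clopen it need not be unitary with $\lastpt(U_x)=x$: for $f=\mathrm{id}$ on $\omega+1$ one gets $U_1=\{0,1\}$, a two-point discrete set, which is not unitary. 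This is exactly why the paper's opening observation replaces the selector $\setof{[0,\beta]}{\beta\leq\alpha}$ (which is a clopen selector but not a canonical one) by $U_\beta=(\tip(\beta),\beta]$ using the Cantor normal form; and why the general implication from (ii) goes through the minimal-length interval representations of clopen sets rather than through minima of fibers. Note also that (i)$\Leftrightarrow$(ii) is itself a nontrivial theorem of Abraham and Bonnet which the paper cites; the direction you prove is the easy one.

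The implication (iii)$\Rightarrow$(ii) has a more fundamental problem: your plan is to put a compatible linear order on $X$ itself and then identify $X$ with an ordinal, but a tree-like canonical Skula space need not be orderable at all, and Mazurkiewicz--Sierpi\'nski only applies to countable spaces. Concretely, let $X=D\cup\{\infty\}$ be the one-point compactification of a discrete set $D$ of cardinality $\aleph_1$. Then $\setof{\{d\}}{d\in D}\cup\{X\}$ is a tree-like canonical clopen selector, and $X$ is a continuous image of $\omega_1+1$, but $X$ is not homeomorphic to any linearly ordered space: in a compact LOTS whose unique non-isolated point is an endpoint of each of the two sides, every bounded infinite monotone sequence of isolated points would have to accumulate at that point, which forces each side to be countable. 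So no ``compatible linear order'' exists, and the conclusion of (ii) can only be that $X$ is a \emph{quotient} of an ordinal. The paper's route respects this: it builds a well-ordering $\preceq$ and a map sending each $U_x$ to a half-open interval preserving inclusion and disjointness, then uses Sikorski's extension theorem to embed $\Clop(X)$ into an interval algebra, i.e.\ it realizes $X$ as a continuous image of an ordinal without ever ordering $X$. Your recursion also treats $U_{m_i}\setminus\{m_i\}$ as a smaller Skula space, but it is open and generally non-compact (e.g.\ $U_\omega\setminus\{\omega\}=\omega$ inside $\omega+1$), so the inductive hypothesis does not apply to it as stated. The ``moreover'' part you rightly flag as delicate; it is a theorem of Rubin and needs more than a section of $f$.
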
 

\begin{proof}[Sketch of Proof]
Let us begin by an observation.
Let $\alpha$ be an infinite ordinal. 
Then $\setof{ [0, \beta] }{ \beta \leq \alpha }$ is a clopen selector for $[0, \alpha]$. 

On the other hand, as in the proof of Theorem~\ref{thm-3.1},  any non-zero ordinal $\beta$ has a Cantor decomposition:
$\beta \eqdf \omega^{\beta_0} p_0 + \cdots + \omega^{\beta_\ell} p_\ell$ 
where $\beta_0 > \cdots > \beta_\ell$ and $p_i \geq 1 $ for $i \leq \ell$. 
Denote by  $\tip(\beta)$ the last block.
For instance, if $\beta = \omega^{\omega} \cdot 2 + \omega ^ 7 + \omega^{3} \cdot 5$ then $\tip(\beta) = \omega^{3}$ and if $\beta = \omega+5$ then $\tip(\beta) = 1$.
So $[\tip(\beta), \beta)$ is order-isomorphic to $\tip(\beta) \eqdf \omega^{\beta_\ell}$. 
Therefore $\tip(\beta)+1$ and thus 
$U_\beta \eqdf (\tip(\beta), \beta]$ are unitary and of Cantor-Bendixson height $\beta_\ell$. 
Now, obviously the set of 
$\SETOF{  \! U_\beta }{\beta \leq \alpha }$ 
is a tree-like canonical clopen selector for $[0,\alpha]$.

\smallskip

(ii)$\Rightarrow$(iii) \cite[\S2.6]{bonnet}.
The space $X$ is a continuous image of $\alpha+1$, which means that 
the Boolean algebra $\Clop(X)$ is a (superatomic) subalgebra of $\Clop([0, \alpha])$. 
We recall this construction of the tree-like canonical selector $\mU$ for $X$.
For any $U \in \Clop(X)$ there is a unique finite strictly
increasing sequence $\vec{s}\,{}^U \eqdf \seqn{ s_i^U }{ i < 2\ell(U) }$
of members of $\alpha+1$ such that:
$U = \bigcup_{i<\ell} (s_{2i}^U , s_{2i+1}^U]$ with
$(s_{2i+1}^U, s_{2i+2}^U] \neq \emptyset$ for $i < \ell(U)-1$.
Fix $x \in X$. 
Let 
\begin{eqnarray}
m[x]
&
=
&
\min  \, \SETOF{  \, \ell(U) \in \omega \, }
{ \, U \in \Clop(X)  \text{ and }  \Endpt(U) = \{ x \} \, } \, ,
\mbox{\rm\ \ \ \  and}
\nonumber
\\
\mU[x]
&
=
&
\SETOF{ U \in \Clop(X) \, }{  \, \Endpt(U) = \{ x \} \text{ and }   \ell(U) = m[x] \, } \, .
\nonumber
\end{eqnarray}
We recall that for any integer $n$ (in particular if $n \eqdf m[x]$), 
the set $[\alpha+1]^{n}$ of finite strictly increasing sequences of $\alpha+1$ of length $n$ is well-ordered by the lexicographic order relation denoted by $\preceq$. 
Hence 
$$
S[x] \eqdf \setof{ \vec{s}\,{}^U }{ U \in \mU[x] }
$$
is well-ordered by $\preceq$, and thus 
$\vec{t}\,^U \eqdf \min^{\preceq}( S[x] )$ exists. 
We set $$U_x = \bigcup_{i<m[x]} (t_{2i}^U , t_{2i+1}^U].$$

Then $ \mU \eqdf \setof{ U_x }{ x \in X } $ 
is the required tree-like canonical clopen selector for $X$:  
see~\cite[\S2.6: Part C]{bonnet}. 

\smallskip

(iii)$\Rightarrow$(ii) \cite[\S2.4]{bonnet}. 
We use the following fact, whose proof can be obtained by induction on the (well-founded) rank $\rank(X)$ of $X$: 
for analogous results see \,S. Todor\v cevi\'c in \cite[Ch 6 \S 2]{HBSTT} and S. Koppelberg \cite[Ch. 6  \S 16]{kopp}. 
Let $\mU \eqdf \setof{ U_x }{ x \in X }$ be a tree-like canonical selector for $X$, considered as a well-founded set of subsets of $X$. 
So $x \leq y$ if $U_x \subseteq U_y$.  
There are a well-ordering $\preceq$ on $X$ and a one-to-one map 
$\fnn{ \varphi }{ \mU }{ \powerset(X) }$, satisfying that for every $x, y \in X$: 

\begin{itemize}
\item[{\rm(1)}]
If $x \leq y$ then $x \preceq y$,

\item[{\rm(2)}]
$\varphi(U_x)$ is an half-open interval in $\pair{X}{\preceq}$ of the form $(a_x, b_x]$ with $a_x, b_x \in X$, 

\item[{\rm(3)}]
If $U_x = \{x\}$ then $\varphi(U_x)$ is a singleton, and 

\item[{\rm(4)}]
$U_x \subseteq U_y$ iff $\varphi(U_x) \subseteq \varphi(U_y)$, 
and 
$U_x \cap U_y = \emptyset$ iff $\varphi(U_x) \cap \varphi(U_y) = \emptyset$. 

\end{itemize} 
By Sikorski's extension theorem \cite[Theorem 5.5]{kopp}, 
we extend $\varphi$ in a one-to-one Boolean homomorphism from $\Clop(X)$ into the interval algebra $B(X)$ over $\pair{X}{\preceq}$.

\smallskip

(i)$\Leftrightarrow$(ii). See Abraham and Bonnet~\cite[Theorem 1]{AB1}.

\smallskip

The ``Moreover'' part is a re-statement of a result of M. Rubin \cite[Theorem 5.1]{Ru} proved in terms of Boolean algebras.
\end{proof}

\begin{comment*}
\label{comment-no-label-1}
\begin{rm}%
M. Pouzet \cite{pouzet} gave the following proof of Proposition~\ref{prop-4.6}(iii)$\Rightarrow$(i). 
Let $X$ be a set and let $\mF$ be a family of nonempty subsets of $X$ such that two members of $\mF$ are either comparable or disjoint.
Then $\mF$ is order-isomorphic to a set of intervals of a linear ordering. 
He proved first the case where $\mF$ is finite and then made the final conclusion using the ``Compactness Theorem''.
\end{rm}
\end{comment*}

\subsection{The space of initial subsets of a partial ordering}
\label{section-4.1}

Let $P$ be a poset.
Recall that $\IS(P)$ denotes the set of all initial subsets of $P$ 
(so $\emptyset, P \in \IS(P)$). 
Let $\FS(P)$ be the set of all final subsets of $P$. 
Then $\fnn{ \varphi }{ \IS(P) }{ \FS(P) }$ defined by $\varphi(I) = P \setminus I$ is an isomorphism between the complete distributive lattices $\pair{ \IS(P) }{ \subseteq }$ and $\pair{ \FS(P) }{ \supseteq }$.

Since $\IS(P), \FS(P) \subseteq \{ 0,1 \}^P$, we endow $\IS(P)$ and $\FS(P)$ with the pointwise topology. 
Hence the spaces $\IS(P)$ and $\FS(P)$ are compact and $\varphi$ is a homeomorphism. 
So: 
\begin{itemize}
\item[{\rm($\star$)}]
{\em We identify the Priestley spaces $\pair{ \IS(P) }{ \subseteq }$ and $\pair{ \FS(P) }{ \supseteq }$ endowed with the pointwise topology $\calTT_p$.}
\end{itemize}
In \cite[Theorem 2.3]{ABKR}, it is shown:
\begin{itemize}
\item[{($\star\star$)}]
The Boolean algebra $\Clop(\FS(P))$ of clopen subsets of $\FS(P)$ is the poset algebra $F(P)$. 
\end{itemize} 
For the definition and properties of (free) poset algebras $F(P)$ see \cite{ABKR} and \cite[\S3]{ABK}. 

\begin{remark}
	Let $X \eqdf \alpha+1 = [0,\alpha]$ be an infinite successor ordinal, considered as a Skula space. 
	Then $H(X) = \IS(X) \setminus \{ \emptyset\}$ and it is easy to verify that
	the spaces $\IS(X)$ and $H(X)$ are homeomorphic. 
\end{remark}

We say that a poset $P$ is \dfn{narrow} 
if every antichain (set of pairwise incomparable elements) is finite. 
A poset $P$ is \dfn{order-scattered} if $P$ does not contain a copy of the rational chain $\natQ$.
The next result can be found in \cite[Theorem 1.3]{ABKR}. 

\begin{proposition}
\label{prop-4.8}
\begin{it}
Let $P$ be a poset. 
The following are equivalent. 
\begin{itemize}
\item[{\rm(i)}]
$P$ is a narrow and order-scattered poset. 
\item[{\rm(ii)}]
$\FS(P)$ is a scattered space, i.e. the poset algebra $F(P)$ is superatomic, i.e. every quotient algebra of $F(P)$ has an atom.  
\item[{\rm(iii)}]
$\FS(P)$ is a Skula space, i.e. the poset algebra $F(P)$ is well-generated. 
\qed
\end{itemize}
\end{it}
\end{proposition}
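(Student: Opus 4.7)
The plan is to prove the cycle (iii)$\Rightarrow$(ii)$\Rightarrow$(i)$\Rightarrow$(iii). The first implication is immediate from Fact~\ref{Fact-1}(3): every Skula space is scattered.

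For (ii)$\Rightarrow$(i), I would argue contrapositively. Since continuous images of compact scattered spaces are scattered, it suffices, in each failure case, to display a continuous surjection from $\FS(P)$ onto a non-scattered compact space. If $P$ admits an infinite antichain $A$, consider the restriction $\rho_A \colon \FS(P)\to 2^A$, $\rho_A(F)=F\cap A$. Continuity holds because each primitive clopen $U_p = \{F\in\FS(P):p\in F\}$ is clopen in $\FS(P)$. Surjectivity: for $S\subseteq A$, take $F=\uu S$, and note $\uu S\cap A = S$ since $A$ is an antichain. As $A$ is infinite, $2^A$ is a Cantor space, hence not scattered. If $P$ contains an induced subposet $Q$ order-isomorphic to $\natQ$, consider the analogous $\rho_Q \colon \FS(P)\to\FS(Q)$; for surjectivity note $\uu_P G\cap Q = G$ for $G\in\FS(Q)$ (using that $Q$ is induced and $G$ is final in $Q$). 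Finally, $\FS(\natQ)$ is non-scattered: the map $r\mapsto \{q\in\natQ:q>r\}$ continuously injects $\mathbb{R}\setminus\natQ$ into $\FS(\natQ)$, producing a perfect subspace.

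The heart of the matter is (i)$\Rightarrow$(iii). By Theorem~\ref{thm-2.1}(iv), it suffices to produce a well-founded sublattice $\mathcal W$ of $\Clop(\FS(P))$ that generates the Boolean algebra. The natural candidate is the sublattice generated by the primitive clopens $U_p$, $p\in P$; its elements have the form $\bigcup_{i<n}\{F\in\FS(P):F\supseteq T_i\}$ for finite subsets $T_i\subseteq P$. Generation of $\Clop(\FS(P))$ as a Boolean algebra is automatic, since $\{U_p:p\in P\}\cup\{\FS(P)\setminus U_p:p\in P\}$ is a clopen subbasis for the pointwise topology.

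The main obstacle will be well-foundedness of $\mathcal W$ under inclusion; this is precisely where narrowness and order-scatteredness must enter decisively. I would proceed by transfinite induction on the Hausdorff-type rank of $P$, invoking the structural classification of narrow order-scattered posets as those built from finite antichains by well-ordered and reverse well-ordered lexicographic sums. Antichains (necessarily finite by narrowness) serve as the base case, and lexicographic sums in the well-ordered direction are handled by appending ranks. The delicate step is the reverse well-ordered sum: one must show that a hypothetical strictly decreasing chain $C_0\supsetneq C_1\supsetneq\cdots$ in $\mathcal W$ forces either an infinite antichain or an infinite strictly descending chain in $P$, each contradicting the hypothesis. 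Carrying out this combinatorial dichotomy for the join of finitely many principal upper sets in $P$ is where the proof concentrates its work.
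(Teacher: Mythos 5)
Your implications (iii)$\Rightarrow$(ii) and (ii)$\Rightarrow$(i) are fine. Note that the paper offers no proof of this proposition at all --- it is quoted from \cite[Theorem~1.3]{ABKR} --- so your contrapositive argument for (ii)$\Rightarrow$(i), via the restriction maps $\rho_A$ and $\rho_Q$ and the standard fact that continuous images of scattered compacta are scattered, is a pleasant self-contained addition (modulo the small imprecision that the image of $\IR\setminus\natQ$ in $\FS(\natQ)$ is dense-in-itself rather than literally a perfect subspace, which is all you need).

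The implication (i)$\Rightarrow$(iii) has a genuine gap, and not merely because you leave the ``delicate step'' undone: the candidate sublattice $\mathcal W$ you propose is provably \emph{not} well-founded already for very simple posets satisfying (i). Take $P=\omega^{*}$, the chain $p_0>p_1>p_2>\cdots$; it is narrow and order-scattered, and $\FS(P)$ is homeomorphic to $\omega+1$, hence certainly Skula. Since $p_{n+1}<p_n$, every final set containing $p_{n+1}$ contains $p_n$, so $U_{p_0}\supseteq U_{p_1}\supseteq\cdots$, and each inclusion is strict because $\uu p_n\in U_{p_n}\setminus U_{p_{n+1}}$. Thus the sublattice generated by the primitive clopens already contains an infinite strictly decreasing chain. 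The same failure occurs for every non-well-founded $P$, which is exactly the case not covered by the easy well-quasi-ordering argument of Proposition~\ref{prop-6.3}; so no combinatorial dichotomy about antichains versus descending chains can rescue the plan as stated (an infinite descending chain in $P$ is perfectly compatible with (i)), and the well-founded generating sublattice has to be constructed in a genuinely different and much more elaborate way. This is precisely why the paper does not reprove the statement but cites \cite[Theorem~1.3]{ABKR}, where that construction is the main content.
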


A poset $P$ is a \dfn{well-quasi ordering (w.q.o.)} whenever $P$ is  narrow and well-founded. 
The notion of w.q.o. was introduced by G. Kurepa in 1937, cited in \cite{Kurepa}, and is a frequently discovered concept: see for instance Kruskal \cite{Kruskal}. 
We recall two facts for which the proof is obvious.

\begin{proposition}
\label{prop-6.2}
\begin{it}
Let $P$ be a partial ordering. 
The following are equivalent.
\begin{itemize}
\item[{\rm(i)}]
$P$ is a well-quasi ordering. 
\item[{\rm(ii)}] 
$\pair{ \IS(P) }{ \subseteq }$ (i.e. $\pair{ \FS(P) }{ \supseteq }$) has no strictly decresing sequence. 
\item[{\rm(iii)}]
Any nonempty final subset $K$ of $P$ is finitely generated, i.e. $K$ contains a nonempty finite subset $\sigma$ such that $K = \uu \sigma$. 
\qed 
\end{itemize} 
\end{it} 
\end{proposition}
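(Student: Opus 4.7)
The plan is to prove the equivalence by establishing the cycle $\text{(i)}\Rightarrow\text{(iii)}\Rightarrow\text{(ii)}\Rightarrow\text{(i)}$, using only elementary properties of well-founded sets, antichains, and finite subsets of unions.

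For $\text{(i)}\Rightarrow\text{(iii)}$, I would fix a nonempty final subset $K\subseteq P$ and let $\sigma \eqdf \Min(K)$. Since $P$ is well-founded, every element of $K$ lies above some minimal element of $K$, so $K=\uu\sigma$. Since $\sigma$ consists of pairwise incomparable elements, it is an antichain, and narrowness forces $\sigma$ to be finite.

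For $\text{(iii)}\Rightarrow\text{(ii)}$, I would argue by contraposition. Given a strictly decreasing chain $I_1\supsetneq I_2\supsetneq\cdots$ in $\IS(P)$, the complements $F_n\eqdf P\setminus I_n$ form a strictly increasing chain of final subsets. Their union $K\eqdf\bigcup_n F_n$ is again a final subset, so by (iii) there is a finite $\sigma\subseteq K$ with $K=\uu\sigma$. Since $\sigma$ is finite, $\sigma\subseteq F_N$ for some $N$, whence $K=\uu\sigma\subseteq F_N\subseteq K$. But then $F_{N+1}\subseteq K=F_N$, contradicting strict increase.

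For $\text{(ii)}\Rightarrow\text{(i)}$, again by contraposition, suppose $P$ fails to be a well-quasi ordering. Either $P$ contains a strictly decreasing sequence $p_1>p_2>\cdots$, in which case the principal ideals $\dd p_n$ form a strictly decreasing chain in $\IS(P)$ (since $p_n\in\dd p_n\setminus\dd p_{n+1}$); or $P$ admits an infinite antichain $\{a_n : n\in\omega\}$, in which case the initial subsets $\dd\{a_k : k\geq n\}$ form a strictly decreasing chain (because $a_n$ belongs to the $n$-th ideal but, by incomparability, fails to lie below any $a_k$ with $k>n$). Either way, condition (ii) is violated.

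The only mild obstacle is getting the directions of the inclusions right when passing between $\IS(P)$ and $\FS(P)$ via complementation; otherwise the proof is a direct application of the definitions of \emph{narrow}, \emph{well-founded}, and \emph{finite subset of a union}, with no topology actually required.
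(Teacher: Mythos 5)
Your proof is correct. The paper itself offers no argument for this proposition (it is stated with a \qed and the remark that the proof is obvious), and the cycle (i)$\Rightarrow$(iii)$\Rightarrow$(ii)$\Rightarrow$(i) you give — taking $\sigma=\Min(K)$ and using narrowness for (i)$\Rightarrow$(iii), stabilizing a finite generating set inside an increasing union of final sets for (iii)$\Rightarrow$(ii), and splitting the failure of w.q.o.\ into a descending sequence of principal ideals or a descending sequence of ideals generated by tails of an infinite antichain for (ii)$\Rightarrow$(i) — is exactly the standard argument the authors are alluding to, with all the small points (e.g.\ that minimal elements of $\{y\in K: y\le x\}$ are minimal in $K$, and the direction of inclusions under complementation) handled correctly.
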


At the opposite of Proposition~\ref{prop-4.8}(i)$\Rightarrow$(iii), for which the proof is far from being obvious, the proof in some special case is quite trivial. 

\begin{proposition}[Special case of Proposition~\ref{prop-4.8}]
\label{prop-6.3}
\begin{it}
Let $P$ be a well-quasi ordering. 
Then $\FS(P)$ is a Skula space, and thus $\FS(P)$ is a scattered space. 
\end{it}
\end{proposition}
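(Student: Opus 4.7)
The plan is to exhibit an explicit clopen selector for $\FS(P)$ and then invoke Theorem~\ref{thm-2.1}.

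Recall that $\FS(P)$ is a Priestley space when endowed with the pointwise topology inherited from $\{0,1\}^P$ and the partial order $\supseteq$. In particular, subbasic clopen sets are of the form $\{G \in \FS(P) : p \in G\}$ and $\{G \in \FS(P) : p \notin G\}$ for $p \in P$. For each $F \in \FS(P)$, let
\[ U_F = \{G \in \FS(P) : G \supseteq F\}, \]
the principal ideal of $F$ in the order $\supseteq$.

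The crucial step uses the w.q.o. hypothesis. By Proposition~\ref{prop-6.2}, every nonempty final subset $F \subseteq P$ is finitely generated, so there exists a finite $\sigma \subseteq P$ with $F = \uu \sigma$. Consequently,
\[ U_F = \{G \in \FS(P) : \sigma \subseteq G\} = \bigcap_{p \in \sigma} \{G \in \FS(P) : p \in G\}, \]
a finite intersection of subbasic clopen sets, hence clopen. (If $F = \emptyset$, then $U_\emptyset = \FS(P)$ is trivially clopen.) This is really the only place the w.q.o. hypothesis is used, and it is where I expect any difficulty: without finite generation, $U_F$ would only be closed (an intersection of subbasic clopen sets indexed by all $p \in F$), not clopen.

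Finally, the family $\mU \eqdf \{U_F : F \in \FS(P)\}$ satisfies the three conditions defining a clopen selector, since each $U_F$ is the principal ideal of $F$ in the partial order $\supseteq$: (1) $F \supseteq F$ gives $F \in U_F$; (2) if $F_1 \in U_{F_2}$ and $F_2 \in U_{F_1}$, then $F_1 \supseteq F_2 \supseteq F_1$, forcing $F_1 = F_2$; (3) if $G \in U_F$, i.e.\ $G \supseteq F$, then $U_G \subseteq U_F$ by transitivity of $\supseteq$. Hence $\FS(P)$ admits a partial order with clopen principal ideals, so by the equivalence (ii)$\Leftrightarrow$(i) of Theorem~\ref{thm-2.1}, $\FS(P)$ is a Skula space. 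Scatteredness then follows from Fact~\ref{Fact-1}(3).
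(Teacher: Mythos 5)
Your proof is correct and follows essentially the same route as the paper: both use Proposition~\ref{prop-6.2} to write each final set as $\uu\sigma$ for a finite $\sigma$, deduce that the principal ideals $U_F=\setof{G\in\FS(P)}{G\supseteq F}$ are clopen, and verify that these form a clopen selector. Your write-up is merely a bit more explicit about the finite intersection of subbasic clopen sets, the case $F=\emptyset$, and the appeal to Theorem~\ref{thm-2.1}.
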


\begin{proof}
Obviously $\FS(P)$ is compact. 
By Proposition~\ref{prop-6.2}, 
for any $K \in \FS(P)$ 
there is a nonempty finite antichain $\sigma_K$ in $P$ such that $K = \uu \sigma_K$. 
Therefore  
\smallskip
\newline
\smallskip
\centerline{
$U_K^+ 
\eqdf \setof{ F \in \FS(P) }{ \sigma_F \subseteq G } 
= \setof{ F \in \FS(P) }{ F \supseteq K } $ 
}
is a clopen subset of $\FS(P)$. 
It is obvious to see that  
$\mU \eqdf \setof{ U_K^+ }{ K \in \IS(P) }$ 
is a clopen selector for $\FS(P)$.  
\end{proof} 

Note that we do not know if $\FS(P)$ is canonically Skula whenever $P$ is a well-quasi ordering: see  
Questions~\ref{question-6.4}--\ref{question-6.8}. 

Also let us remark that in special cases we can say more than in Proposition~\ref{prop-4.8}:  
Part~(1) of the next result seems to be well-known, but we could not find it in the literature.

\begin{proposition}
\label{prop-4.9}
\begin{it}
\begin{itemize}
\item[{\rm(1)}]
If $P$ is an order-scattered linear ordering, 
then the space $\FS(P)$ is a quotient of a successor ordinal, and thus $\FS(P)$ has a tree-like canonical clopen selector. 
\item[{\rm(2)}] 
If $P$ is the disjoint union of two copies of $\omega_1$ then 
$\FS(P) \cong (\omega_1+1)^2$ is canonically Skula but $\FS(P)$ has no tree-like canonical clopen selector. 
\end{itemize}
\end{it}
\end{proposition}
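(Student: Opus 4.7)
The plan is to handle the two parts separately. For part (1), the first step is to show that when $P$ is linearly ordered, $\FS(P)$ is linearly ordered by inclusion: if $F \not\subseteq F'$ and $x\in F\setminus F'$, then every $y\in F'$ must satisfy $y>x$ (else upward-closure of $F'$ would force $x\in F'$), and therefore $y\in F$ by upward-closure of $F$, so $F'\subseteq F$. I will then verify that the pointwise topology coincides with the order topology on $\pair{\FS(P)}{\subseteq}$: the pointwise subbasic set $\{F : t\in F\} = \{F : F\supseteq [t,\infty)\}$ is an upper ray in the inclusion order starting at $[t,\infty)$, and this ray is open in the order topology because $[t,\infty)$ has $(t,\infty)$ as immediate predecessor in $\FS(P)$; the analogous statement holds for $\{F : t\notin F\} = \{F : F\subseteq (t,\infty)\}$. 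Both topologies being compact Hausdorff, the coarser-finer relation forces equality. Since a linear order is trivially narrow and $P$ is order-scattered by hypothesis, Proposition~\ref{prop-4.8} makes $\FS(P)$ scattered. Thus $\FS(P)$ is a scattered, compact, $0$-dimensional complete LOTS, and Proposition~\ref{prop-4.6}\,(i)$\Rightarrow$(ii)$\Leftrightarrow$(iii) simultaneously yields a continuous surjection from a successor ordinal onto $\FS(P)$ and a tree-like canonical clopen selector.

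For part (2), with $P = \omega_1 \sqcup \omega_1$, I will first identify $\FS(P) \cong \FS(\omega_1)\times\FS(\omega_1) \cong (\omega_1{+}1)^2$ via the coordinatewise decomposition of final subsets of a disjoint union together with the order-isomorphism-cum-homeomorphism $\FS(\omega_1) \cong \omega_1{+}1$. To see that $(\omega_1{+}1)^2$ is canonically Skula, I will take the standard canonical clopen selector $\{U_\alpha\}_{\alpha\le\omega_1}$ on each factor (where each $U_\alpha$ is a unitary clopen interval with endpoint $\alpha$) and form the product family $\{U_\alpha\times U_\beta\}$; Telg\`arsky's Theorem~\ref{thm-3.3} guarantees each $U_\alpha\times U_\beta$ is unitary with top $(\alpha,\beta)$, so this product family is a canonical clopen selector.

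The hard part is ruling out any tree-like canonical clopen selector, and the plan is to argue by contradiction. If one existed, then Proposition~\ref{prop-4.6}\,(iii)$\Rightarrow$(ii) would make $(\omega_1{+}1)^2$ a continuous image of some successor ordinal $\beta{+}1$. Since $\beta{+}1$ is a compact linearly ordered space, I would then invoke the Balogh--Rudin theorem that every continuous image of a compact LOTS is monotonically normal, hence hereditarily normal. The main obstacle is therefore to refute hereditary normality of $(\omega_1{+}1)^2$, and for this I will exhibit the classical Tychonoff plank $T = (\omega_1{+}1)\times(\omega{+}1)\setminus\{(\omega_1,\omega)\}$ as a subspace of $(\omega_1{+}1)^2$. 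I will recall the textbook pressing-down argument showing that the disjoint closed sets $\{\omega_1\}\times\omega$ and $\omega_1\times\{\omega\}$ cannot be separated by disjoint open sets in $T$, so $T$ is non-normal and therefore $(\omega_1{+}1)^2$ is not hereditarily normal, yielding the required contradiction.
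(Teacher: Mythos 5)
Your proof is correct, but in both halves you take a genuinely different route from the paper. For (1), after identifying $\FS(P)$ as a compact $0$-dimensional complete chain whose pointwise topology coincides with its order topology (a step you verify carefully and the paper merely asserts), you feed it directly into Proposition~\ref{prop-4.6}\,(i)$\Rightarrow$(ii)$\Rightarrow$(iii), using the Abraham--Bonnet theorem as a black box; the paper instead constructs the continuous surjection from a successor ordinal by hand, via a transfinite induction on $\height(\FS(P))$ (reducing to the unitary case, splitting a two-sided end-point, inserting immediate successors at limit stages, and assembling the pieces as a lexicographic sum). Your version is much shorter; the paper's is self-contained, which is presumably the point, since the authors remark that Part~(1) ``seems to be well-known'' but they could not locate it in the literature. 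For (2), the positive half is identical (the product decomposition plus Telg\`arsky's theorem). For the negative half you deduce from Proposition~\ref{prop-4.6}\,(iii)$\Rightarrow$(ii) that $(\omega_1{+}1)^2$ would be a continuous image of a compact ordered space, hence monotonically normal and so hereditarily normal, contradicting the non-normality of the Tychonoff plank inside it; this is valid, though the preservation of monotone normality under continuous maps of compacta goes back to Heath--Lutzer--Zenor rather than Balogh--Rudin (who concern the converse direction), a harmless misattribution. The paper instead invokes the ``Moreover'' clause of Proposition~\ref{prop-4.6} (Rubin's theorem): a continuous image of a successor ordinal is retractable, while $(\omega_1{+}1)^2$ is not, as witnessed by the closed cross $(\omega_1{+}1){\times}\{\omega_1\}\cup\{\omega_1\}{\times}(\omega_1{+}1)$. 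Both obstructions work; yours imports monotone-normality machinery external to the paper, while the paper's argument stays entirely inside the toolkit it has already set up.
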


\begin{proof}
(1)
We set $C = \FS(P)$. 
So $C$ is a complete chain, i.e. every subset of $C$ has a supremum and an infimum, and $C$ is a topological scattered space. 
Remark that the pointwise topology on $\FS(P)$ is the order topology on $C$. 

Since $C$ is order-scattered, between any two elements of $C$ there are two consecutive elements and thus $C$ is $0$-dimensional. 
We prove the claim by induction on $\height(C)$. 
If $\height(C) = 0$, $C$ is finite and there is noting to prove. 
Next suppose that $\height(C) = \alpha$. 
We assume that for every complete and scattered chain $D$: 
if $\height(D) < \alpha$ then $D$ is a continuous image of some ordinal $\delta+1$.
Since $C$ is $0$-dimensional, it suffices to prove the result whenever $C$ is unitary. 

We set $c^0 = \min(C)$ and $c^1 = \max(C)$.
The point $e = \lastpt(C)$ is called two-sided if $[c^0,e)$ has no maximum and $(e,c^1]$ has no minimum.
We claim that we may assume that $e$ is not two sided. 
Indeed, otherwise we split $e$, that is we replace $e$ by two consecutive elements $e^- < e^+$. 
So we obtain a chain $\wh{C} = [c^0, e^-] + [e^+, c^1]$ satisfying $\height(\wh{C}) = \height(C)$ and $\emptyset \neq \Endpt(\wh{C}) \subseteq \{ e^- , e^+ \}$. 
The identification of $e^-$ with $e^+$ defines an increasing and continuous map from $\wh{C}$ onto $C$. 
Hence it suffices to prove the result whenever 
$C \eqdf [c^0, e^-]$ and $\lastpt(C) = e^-$. 
So $\max(C) = e^- \eqdf \lastpt(C)$.   
The case $C \eqdf [e^+, c^1]$ is similar.

Let $\seqnn{ c_\alpha }{ \alpha<\lambda}$ be a strictly increasing and unbounded sequence in $[c^0, e^-)$. 
Since $C$ is complete, we may assume that $c_0 = c^0$ and that 
$\sup_{\beta<\alpha} c_\beta =  c_\alpha$ for every limit $\alpha<\lambda$. 
For each limit $\alpha <\lambda$ we add an immediate successor $d_\alpha$ to $c_\alpha$ that is: $d_\alpha \not\in C$  and for every $x \in C$:
if $x > c_\alpha$ then $x > d_\alpha$. 
Hence we obtain a chain 
$\overline{C} = C \cup \setof{ d_\alpha }{ \alpha < \lambda }$. 
The identification of $d_\alpha$ with $c_\alpha$ for all $\alpha$, defines an increasing and continuous map from $\overline{C}$ onto $C$. 
So it suffices to prove the result for $\overline{C}$ and thus, 
we may assume that $C = \overline{C}$.

We set $C_0 = [c_0, c_1]$ ($c_0 \eqdf \min(C)$), and for each successor $\alpha \geq 1$ let $C_\alpha = [d_\alpha , c_{\alpha+1}]$. 
Since $C_0 = [c_0, d_1)$ and $C_\alpha = (c_\alpha , d_{\alpha+1})$, each $C_\alpha$ is a clopen subset of $C$.
Also for every limit $\alpha$ we set $C_\alpha = \{ c_\alpha \}$ 
(recall that $c_\alpha$ has a successor $d_\alpha$ in $C$).  
So $C$ is the lexicographic sum 
$\bigl(  \sum_{\alpha<\lambda} C_\alpha \bigr) + \{ \max(C) \}$. 
Now since $\height(C_\alpha) < \height(C)$, by the induction hypothesis, 
there is a successor ordinal $\delta_\alpha$ and a continuous function $f_\alpha$ from $\delta_\alpha$ onto $C_\alpha$. 
If $\alpha$ is limit, and thus $C_\alpha = \{ c_\alpha \}$, we may assume that $\delta_{\alpha} = 1$. 
Moreover we set $f_\lambda(\max(C)) = \max(C)$. 
Hence $f \eqdf \bigcup_{\alpha \leq \lambda} f_\alpha$ is a continuous map from $\bigl(  \sum_{\alpha<\lambda} \delta_\alpha \bigr) + \{ \max(C) \}$ onto the well-ordering 
$C \eqdf  \bigl(  \sum_{\alpha<\lambda} C_\alpha \bigr) + \{ \max(C) \}$.

(2) 
Let $P = \omega_1 \, \sqcup \, \omega_1$ be the disjoint union of two copies of $\omega_1$, that is 
$P \eqdf \omega_1 {\times} \{0\} \cup \omega_1 {\times} \{1\} $ and $x$ and $y$ are incomparable for any $x \in \omega_1 {\times} \{0\}$ and $y \in \omega_1 {\times} \{1\}$. 
So $\IS(P) \cong \IS(\omega_1) \times \IS(\omega_1) \cong (\omega_1+1)^2$. 
By Telg\`arsky Theorem~\ref{thm-3.3}, the product of two unitary canonical Skula spaces is canonically Skula.
So $(\omega_1+1)^2$ is a unitary canonically Skula space. 

Now, by contradiction, assume that $(\omega_1+1)^2$ has a tree-like canonical clopen selector. 
By Proposition~\ref{prop-4.6}, $(\omega_1+1)^2$ is a quotient of $\alpha+1$ for some ordinal $\alpha$ and $(\omega_1+1)^2$ is retractable. 
But it is obvious that $(\omega_1+1)^2$ is not retractable: consider the closed subset 
$(\omega_1{+}1) {\times} \{\omega_1\} \cup \{\omega_1\}  {\times} (\omega_1{+}1)$ of $ (\omega_1 {+} 1)^2$.
A contradiction. 
\end{proof}

\subsection{Mr\'owka spaces}
\label{section-4.2} 

Recall that a Mr\'owka space $K_{\mA}$ is a unitary canonical Skula space of height~2. 
The space $K_{\mA}$ is defined by an infinite almost disjoint family $\mA$ on an infinite set $S$. 
We may assume that $\lastpt(K_{\mA}) = \max(K_{\mA})$
\begin{itemize}
\item[{\rm(1)}]
Let $\mA$ be an infinite almost disjoint family on $S$.
Then the space $H(K_\mA)$ is a unitary canonical Skula space of height~$\omega$
(reformulation of Theorems~\ref{thm-4.1} and~\ref{thm-4.4}(3) with $1+\alpha=2$.
Therefore $H(K_\mA)$ is far from being  a Mr\'owka space. 
\item[{\rm(2)}]
Let $\mA$ be a maximal almost disjont family on $\omega$.
Then the Mr\'owka space $K_{\mA}$ is not homeomorphic to a topological semilattice (Proposition~\ref{prop-5.3}). 
\end{itemize} 

We describe, in two ways, a general procedure of modifying an almost disjoint family $\mA$ on a set $S$ leading to a Mr\'owka space $K_{\mA^\star}$ with a continuous join operation.
Recall that $\Finnn{I}$ denotes the nonempty and finite subsets of $I$.

On one hand, given $A, B \in \mA$ with $A \neq B$, notice that
$\Fin A \cap \Fin B = \Fin{A\cap B}$ is finite.
Setting $A^\star \eqdf \Finnn{A}$, it follows that the family 
$$
\mA^\star = \setof{ \, A^\star }{ A \in \mA  \, } 
$$
is almost disjoint on $S^\star \eqdf \Fin S$. 
Therefore 
$$K_{\mA^\star} \eqdf S^\star \cup \mA^\star \cup \{\infty^\star\}
$$ 
is a Mr\'owka space, where $\infty^{\star} = \Max(K_{\mA^\star})$. 

On the other hand, we can describe $K_{\mA^\star}$ is a more formal way as follows.
Since $K_\mA$ is canonically Skula, by Theorem~\ref{thm-4}, 
$H(K_{\mA})$ is a unitary canonical Skula space.
We may assume that $\infty^{\star} \eqdf \lastpt(H(K_{\mA})) = \max(H(K_{\mA}))$.
Any member $L \eqdf \dd L$ of $H(K_{\mA})$ with  $L \neq K_\mA$ is of the form
$L = \bigcup \SETOF{ A \cup \{ x_A \} }{ A \in \mA_L } \cup \rho_L$ where $\mA_L \varsubsetneqq \mA$ is finite, $\rho_L$ is a finite subset of $S$ and $ | \mA_L | + | \rho_L | \geq 1$.
We set 
$$
E = \SETOF{ L \in H(K_\mA) }{ | \mA_L | \geq 2  \text{ or} , 
| \mA_L | = 1 | \text{ and } | \rho_L | \geq 1 }
\, .
$$ 
Obviously $E$ is a closed final subset of $\pair{ H(K_\mA) }{ \subseteq }$: 
indeed $H(K_\mA) \setminus E = \bigcup \setof{ K^+ }{ | \mA_K | \leq 1 }$ is an open initial subset of $H(K_\mA)$.
Hence the set $E$ induces the closed equivalence relation 
$$
\E = \setof{ (x,y) \in H(K_\mA) {\times} H(K_\mA) }{ x=y }
\cup ( E \times E)
$$ 
on $H(K_\mA)$. 
Let   
$$
G(K_\mA) = H(K_\mA) / \E  . 
$$
Since  we collapse only  all elements of $\E$ in a point, denoted by $\infty$, we have:
$$
G(K_\mA) = \SETOF{ L \in H(K_\mA) }
{  L \in \Finnn S \text{ or } L \in \mA } \cup \{\infty\}. 
$$ 
We denote by $\fnn{ q }{ H(K_\mA) }{ G(K_\mA) }$ the quotient map. 
Obviously $G(K_\mA)$ is compact. 
For each $L \in G(K_\mA) \setminus \{\infty\}$, $\height(L) \leq 1$, and thus $G(K_\mA)$ is of height~2 and unitary. 
So $G(K_\mA)$ is a Mr\'owka space. 
Moreover $q(L) = L$ for any $L \in K_{\mA^{\star}}) \setminus \{\infty^{\star}\}$, and $q(\infty^{\star}) = \infty \eqdf E$.
So, identifying $\infty^{\star}$ with $\infty$, 
\begin{itemize}
\item[{\rm($\star$)}]
The identity map $\fnn{ {\rm Id} }{ K_{\mA^{\star}} }{ G(K_\mA) }$ (with $\infty^{\star} \mapsto \infty$) is a homeomorphism. 
\end{itemize}

To show that $K_{\mA^{\star}} \eqdf G(K_\mA)$ has a structure of a continuous join operation $\join$, we need the following fact that can be found in \cite[Theorem 1.54]{CHK1}. 

\medskip

\noindent
\begin{it}%
Claim.
Let $\pair{Y}{m_Y}$ be a compact topological join  semilattice and  
let $E$ be a closed nonempty final subset of $Y$. 
Then the quotient space $X \eqdf Y/E$ obtained by identification of all points of $E$ has a continuous join operation $m_X$. 
\qed
\end{it} 
\medskip

Since $H(K_\mA)$ is a compact $0$-dimensional join semilattice, by the claim, $G(K_\mA)$ has a continuous join semilattice operation. 
We have proved the following result.

\begin{theorem}
\label{thm-4.10}
Let $K_\mA$ be a Mr\'owka space. 

\begin{itemize}
\item[{\rm(1)}]
$K_{\mA^{\star}} = G(K_\mA)$ and 
$G(K_\mA)$ is a Mr\'owka space with a continuous join operation and $G(K_\mA)$ has a canonical selector. 

\item[{\rm(2)}]
$\fnn{ \eta }{K_\mA }{ K_{\mA^{\star}} }$ defined by 
$\eta(x) = \dd x$ for $x \in K_\mA$ 
is a one-to-one, increasing and continuous function. 
\qed
\end{itemize} 
\end{theorem}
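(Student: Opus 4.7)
The plan is to verify the four assertions of the theorem by assembling the preparatory material already developed in the construction preceding the statement. Most of the substantive content is present: $E$ has been shown to be a closed final subset of $H(K_\mA)$, the quotient $G(K_\mA)$ has been explicitly described, and the cited Claim from \cite{CHK1} gives a continuous join on any quotient of a compact topological join semilattice by a closed nonempty final subset. So the remaining task is bookkeeping: verifying the identification $(\star)$ as a homeomorphism, invoking the Claim, noting why a height-$2$ unitary space is canonically Skula, and assembling $\eta$ as the Priestley embedding followed by the quotient map.

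For the identification $K_{\mA^\star} = G(K_\mA)$, I would first enumerate the elements of $H(K_\mA) \setminus E$. By the description in the paragraph preceding the theorem, a nonempty closed initial subset $L \subseteq K_\mA$ lies outside $E$ precisely when either $\mA_L = \emptyset$ (so $L = \rho_L$ is a finite nonempty subset of $S$) or $|\mA_L| = 1$ with $\rho_L = \emptyset$ (so $L = \{A\} \cup A$ for a unique $A \in \mA$). These correspond respectively to the points of $S^\star = \Finnn{S}$ and of $\mA^\star = \setof{A^\star}{A \in \mA}$, while the collapsed class $E$ plays the role of $\infty^\star$. To see the bijection is a homeomorphism, I would check two things: that isolated points match (any $\rho \in \Finnn{S}$ is isolated in $H(K_\mA) \setminus E$, mirroring the isolation of $\rho \in S^\star$), and that a sequence $\rho_n \in \Finnn{S}$ converges to $\{A\} \cup A$ in $H(K_\mA)$ iff eventually $\rho_n \subseteq A$ and $\rho_n$ escapes every fixed finite subset of $A$, which is precisely the convergence criterion at $A^\star$ in $K_{\mA^\star}$.

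With the identification in hand, part (1) follows from two observations. First, $G(K_\mA)$ carries a continuous join: apply the Claim to $Y \eqdf H(K_\mA)$, which is a compact topological join semilattice by Theorem~\ref{thm-2.3}, and to the closed nonempty final subset $E$. Second, every scattered compact space of height~$2$ is canonically Skula (by Proposition~3.3(a) of \cite{BR1}, cited just after the definition of tree-like Skula space); an explicit canonical selector on $K_{\mA^\star}$ consists of the singletons $\{\rho\}$ for $\rho \in S^\star$, the ideals $\{A^\star\} \cup A^\star$ for $A^\star \in \mA^\star$, and the whole space for $\infty^\star$.

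For (2), I would define $\eta$ as the composition of the Priestley embedding $x \mapsto \dd x$ from $K_\mA$ into $H(K_\mA)$, which is continuous, injective, and order-preserving by Theorem~\ref{thm-2.3}(3), with the continuous quotient map $H(K_\mA) \to G(K_\mA)$. Injectivity survives the quotient because the principal ideals $\dd x$ for $x \in K_\mA \setminus \{\infty\}$ all lie outside $E$ and are pairwise distinct, while $\dd \infty = K_\mA$ lies in $E$ and so maps to $\infty^\star$, giving a distinct image. The main obstacle in this program is the first step: carefully matching the quotient topology on $G(K_\mA)$ with the Mr\'owka topology on $K_{\mA^\star}$, which amounts to unwinding a Vietoris-type basic neighborhood of $\{A\} \cup A$ in $H(K_\mA)$ and recognizing it as a standard Mr\'owka neighborhood of $A^\star$.
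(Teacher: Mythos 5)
Your proposal is correct and follows essentially the same route as the paper: the paper's proof of Theorem~\ref{thm-4.10} is precisely the preceding construction in \S\ref{section-4.2} (the identification $(\star)$ of $K_{\mA^\star}$ with the quotient $G(K_\mA)=H(K_\mA)/\E$, the Claim from \cite[Theorem 1.54]{CHK1} for the continuous join, the observation that a unitary height-$2$ space carries a canonical selector, and $\eta = q \circ \eta^{K_\mA}$). Your verification of the homeomorphism $(\star)$ via matching isolated points and neighborhoods of $A^\star$ is in fact more detailed than the paper, which simply asserts it as obvious.
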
 

Now we will apply the above results to some examples of Mr\'owka spaces.

\subsection{Lusin families and ladder systems}
\label{section-4.3} 
An uncountable almost disjoint family $\mA$ of infinite subsets of  $\Nat$
is called a \dfn{Lusin family} 
(called inseparable family by Abraham and Shelah in~\cite{AS1})
if $\bigcup \mA = \Nat$ and
for any subset $H \subseteq \Nat$ one of the families
$\setof{ A \in \mA }{A \subseteq^* H }$
or $\setof{ A \in \mA }{ A \subseteq^* X {\setminus} H }$ is countable.
Here we denote by $\subseteq^*$ the almost inclusion: for two sets $A, B$ we write \dfn{$A\subseteq^* B$} 
if $A \setminus B$ is finite.
The first example of a Lusin family was constructed by Lusin \cite{Lusin}
who actually constructed a ``special Lusin family''. 
For completeness we give the proof of~Proposition~\ref{prop-4.11} (cf.~\cite[Ch.~3, Theorem~4.1]{HBSTT}).

\begin{proposition}[Lusin]
\label{prop-4.11}
There exists a Lusin family $\mL$ on $\Nat$ of cardinality $\aleph_1$.
\end{proposition}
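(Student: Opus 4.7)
The plan is to build the family $\mL = \{ A_\alpha : \alpha < \omega_1 \}$ by transfinite recursion on $\omega_1$, ensuring at each stage a strong ``growing intersections'' property that implies the Lusin property almost automatically. Concretely, I would aim for the following strengthening: for every $\alpha < \omega_1$ and every $n<\omega$, the set $\{ \beta < \alpha : |A_\beta \cap A_\alpha | < n \}$ is finite. Any family satisfying this (plus almost disjointness) is easily seen to be Lusin, which I explain below.

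For the recursion, finitely many $A_\alpha$ are chosen arbitrarily (say, pairwise disjoint infinite subsets of $\Nat$). At a stage $\alpha \geq \omega$, fix an enumeration $\{ A_\beta : \beta<\alpha \} = \{ B_n : n<\omega \}$. Since the family $\{B_n\}$ is almost disjoint, for each $n$ the set $B_n \setminus ( B_0 \cup \cdots \cup B_{n-1} )$ is cofinite in $B_n$, so I can pick a finite $F_n \subseteq B_n \setminus ( B_0 \cup \cdots \cup B_{n-1} )$ with $|F_n| = n$. Define $A_\alpha \eqdf \bigcup_{n<\omega} F_n$. Then $A_\alpha$ is infinite (sizes $|F_n|$ grow), and for each fixed $k$:
$$
A_\alpha \cap B_k \ = \ \bigcup_{n<k} (F_n \cap B_k) \ \cup \ F_k \ \cup \ \bigcup_{n>k} (F_n \cap B_k) \, ,
$$
where the first union is finite (finitely many finite sets), $F_k$ is finite, and the third union is empty because $F_n \subseteq \Nat \setminus B_k$ whenever $n>k$. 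So $A_\alpha$ is almost disjoint from every previous $A_\beta$, and $| A_\alpha \cap B_k | \geq |F_k| = k$, which gives the strong property at stage $\alpha$.

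It remains to deduce the Lusin property. Suppose for contradiction that for some $H \subseteq \Nat$, both $\mathcal{A}' \eqdf \{ A \in \mL : A \subseteq^* H \}$ and $\mathcal{A}'' \eqdf \{ A \in \mL : A \subseteq^* \Nat \setminus H \}$ are uncountable. By the pigeonhole principle applied to the finite ``errors'', there exists $m<\omega$ such that the subfamilies $\mathcal{A}'_m \eqdf \{ A \in \mathcal{A}' : | A \setminus H | \leq m \}$ and $\mathcal{A}''_m \eqdf \{ A \in \mathcal{A}'' : | A \cap H | \leq m \}$ are both uncountable. Since $\omega_1$ is regular, I can pick $\alpha<\omega_1$ with $A_\alpha \in \mathcal{A}''_m$ and $\{ \beta<\alpha : A_\beta \in \mathcal{A}'_m \}$ infinite. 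The strong property at stage $\alpha$ forces some such $\beta$ with $| A_\beta \cap A_\alpha | > 2m$. But $A_\beta \cap A_\alpha \subseteq ( A_\alpha \cap H ) \cup ( A_\beta \setminus H )$, and each of these has size $\leq m$, giving $| A_\beta \cap A_\alpha | \leq 2m$, a contradiction.

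The routine step is the recursive construction; the main (mild) obstacle is the Lusin verification, where one must notice that pigeonholing on the finite errors $|A\setminus H|$ and $|A\cap H|$ is what converts the intersection growth into the required separation dichotomy.
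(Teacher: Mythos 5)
Your construction is in fact the paper's construction: at stage $\alpha$ you enumerate the predecessors as $\seqof{B_n}{n\in\omega}$ and put $n$ points of $B_n\setminus(B_0\cup\cdots\cup B_{n-1})$ into $A_\alpha$, which is precisely condition (L2) there, and your verification is correct (including the key point that $F_n\cap B_k=\emptyset$ for $n>k$, which simultaneously gives almost disjointness and the lower bound $|A_\alpha\cap B_k|\ge k$). Where you diverge is in the invariant you record and in the endgame. The paper keeps the positional condition (L1) --- for each $k$ only finitely many earlier $A_\xi$ satisfy $A_\xi\cap A_\alpha\subseteq\{0,\dots,k-1\}$ --- refines $L$ and $R$ so that the errors $A_\alpha\setminus H$ and $A_\beta\cap H$ lie inside a fixed initial segment $k$, and then produces infinitely many points of $A_\beta\cap H$ by choosing witnesses $x_k$ above $k$. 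You instead record the cardinality of the intersections, refine by bounding $|A\setminus H|$ and $|A\cap H|$ by a fixed $m$, and close with the count $|A_\beta\cap A_\alpha|\le|A_\alpha\cap H|+|A_\beta\setminus H|\le 2m$ against $|A_\beta\cap A_\alpha|>2m$. The two invariants are interchangeable consequences of the same construction, but your counting finish is a bit cleaner, avoiding the explicit choice of witnesses. One cosmetic slip: you announce ``finitely many'' arbitrarily chosen initial sets but start the recursion at $\alpha\ge\omega$, so the first $\omega$ many should be fixed in advance (e.g.\ a partition of $\Nat$ into infinitely many infinite pieces, which also secures the requirement $\bigcup\mL=\Nat$ from the definition); this does not affect the argument.
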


\begin{proof}
We construct by transfinite induction pairwise almost disjoint sets
$A_\alpha \in \dpower \Nat \omega$ so that for each
$\alpha < \omega_1$ the following condition is satisfied: 
\begin{equation}
\label{Eqjeden}
\bigl( \, \forall k\in\omega\,\bigr)\;\; \bigl( \, |  \,
\{\xi<\alpha: A_\xi \cap A_\alpha \subseteq k\} \, | < \aleph_0 \, \bigr) \, .
\tag{L1}
\end{equation} 
We start by choosing arbitrary disjoint infinite sets
$A_0, A_1, \ldots \subseteq \Nat$.
Fix $\omega \leq \beta < \omega_1$ and suppose $A_\xi$ have been constructed for $\xi < \beta$.
Enumerate $\setof{A_\xi}{\xi<\beta}$ as $\seqof{B_n}{\ntr}$.
Construct $A_\beta$ in such a way that 
\begin{equation}
\label{EqDwaa}
\bigl( \,\forall \ntr \, \bigr)\;\; \bigl( \, |A_\beta \cap B_n \setminus (B_0 \cup \cdots \cup B_{n-1})| = n \, \bigr) \, .
\tag{L2}
\end{equation}
It is clear that (\ref{Eqjeden}) holds.
Thus, the construction can be carried out.

We claim that $\El = \seqnn{A_\alpha}{\alpha < \omega_1}$ is a Lusin family. 
By contradiction, suppose that $H \subseteq \Nat$ is such that both sets 
$$
L = \setof{\alpha < \omega_1}{A_\alpha \subseteq^* H}
\qquad\text{and}\qquad
R = \setof{\beta < \omega_1}{A_\beta \subseteq^* \Nat \setminus H}
$$ 
are uncountable.
So $H$ is infinite.
Refining $L$ and $R$, we may assume that for some $k \in \omega$ the inclusion
\begin{equation}
\label{EqTrzi}
A_\alpha \setminus H \subseteq k \qquad\text{and}\qquad A_\beta \cap H \subseteq k
\tag{L3}
\end{equation}
holds for every $\alpha \in L$ and $\beta \in R$.

Choose $\beta \in R$ so that the set $L \cap \beta$ is infinite.
Then, by (\ref{Eqjeden}), for each $k$ we can find
$\xi(k) \in L \cap \beta$ such that $A_{\xi(k)} \cap A_\beta\not\subseteq k$.
Choose $x_k \in A_{\xi(k)} \cap A_\beta \setminus k$.
Then, by (\ref{EqTrzi}), we conclude that $x_k \in H$
and therefore, since $x_k \in A_\beta$, we have $x_k \in A_\beta \cap H$.
Hence $A_\beta \cap H$ is infinite, contradicting (\ref{EqTrzi}).
\end{proof}

\begin{comment*}
\label{comment-no-label-2}
\begin{rm}%
The crucial property of the almost disjoint family invented by Lusin is Condition (\ref{Eqjeden}).
A family $\mA$ of infinite subsets of a countable set $N$ is called a
\dfn{special Lusin family} 
if it satisfies condition~(\ref{Eqjeden}) with the quantifier ``\,$(\forall\; n\in \Nat)$\," replaced by ``\,$(\forall\; s \in \Fin N)$\,".
That is: for each $\alpha < \omega_1$\,: 
\begin{equation}
\label{Egwida}
\forall\; s\in\Fin N)\;\; | \setof{\xi < \alpha}{ A_\xi \cap A_\alpha \subseteq s} | < \aleph_0.
\tag{{\small $\star$}}
\end{equation} 
Obviously, this property depends on the enumeration of the family.

Therefore the above proof shows the existence of a special Lusin family. 
Also Lusin's Theorem says that a special Lusin family is a Lusin family.
\end{rm}
\end{comment*} 

The next results follows from Theorem~\ref{thm-4.10}.

\begin{proposition}
\label{prop-4.12}
\begin{it} 
Let $\mL$ be a special Lusin family on $\natN$.
\begin{itemize}
\item[{\rm(1)}]
$\mL^\star$ is a special Lusin family.

\item[{\rm(2)}]
$G(K_\mL) = K_{\mL^\star}$ and $K_{\mL^\star}$ admits a continuous join semilattice structure. 
\qed
\end{itemize}
\end{it}
\end{proposition}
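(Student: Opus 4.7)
The plan is to deduce (1) by a direct translation of ``finite subsets of $\Fin\natN$'' into ``finite subsets of $\natN$'', and then obtain (2) essentially for free from Theorem~\ref{thm-4.10} applied to the Mr\'owka space $K_\mL$.

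For (1), I would fix an enumeration $\mL=\setof{A_\alpha}{\alpha<\omega_1}$ witnessing the special Lusin property, so that $A_\alpha^\star=\Finnn{A_\alpha}$ for each $\alpha$. Almost disjointness of $\mL^\star$ has already been checked in the paragraph preceding Theorem~\ref{thm-4.10} via the identity $A_\xi^\star\cap A_\alpha^\star=\Finnn{A_\xi\cap A_\alpha}$, which is the only algebraic fact needed. I would then fix $\alpha<\omega_1$ and an arbitrary finite $s^\star=\{t_1,\dots,t_k\}\subseteq\Fin\natN$, and show that any $\xi<\alpha$ with $A_\xi^\star\cap A_\alpha^\star\subseteq s^\star$ forces $A_\xi\cap A_\alpha$ to be a subset of the single finite set $t\eqdf t_1\cup\cdots\cup t_k\in\Fin\natN$.

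The key step is the following observation. If $\Finnn{A_\xi\cap A_\alpha}\subseteq s^\star$, then $A_\xi\cap A_\alpha$ must be finite, since an infinite set has infinitely many distinct finite subsets whereas $s^\star$ has only $k$ elements. Moreover, each singleton $\{n\}$ with $n\in A_\xi\cap A_\alpha$ belongs to $s^\star$ and hence equals some $t_j$, which gives $n\in t_j\subseteq t$. Therefore
$$\setof{\xi<\alpha}{A_\xi^\star\cap A_\alpha^\star\subseteq s^\star}\ \subseteq\ \setof{\xi<\alpha}{A_\xi\cap A_\alpha\subseteq t},$$
and the right hand side is finite by the special Lusin property (\ref{Egwida}) of $\mL$ applied to the finite set $t\in\Fin\natN$. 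This verifies (\ref{Egwida}) for the enumeration $\seqnn{A_\alpha^\star}{\alpha<\omega_1}$ of $\mL^\star$, so $\mL^\star$ is a special Lusin family (in particular, it is uncountable, almost disjoint, and its union is $\Fin\natN$, as needed for the ``Mr\'owka'' status).

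For (2), once (1) is established, $\mL^\star$ is an infinite almost disjoint family on $S^\star=\Fin\natN$, so the Mr\'owka space $K_{\mL^\star}$ is well-defined, and both assertions of (2) are immediate from Theorem~\ref{thm-4.10}(1) applied to $\mA=\mL$: that theorem gives $K_{\mL^\star}=G(K_\mL)$ (via the identification $(\star)$ just before its statement) and endows $G(K_\mL)$ with a continuous join-semilattice operation, inherited by the quotient from the semilattice structure of the hyperspace $H(K_\mL)$. There is no serious obstacle in the argument; the only point requiring care is the finite-set bookkeeping translating the quantifier ``$\forall s^\star\in\Fin{S^\star}$'' into ``$\forall s\in\Fin\natN$''.
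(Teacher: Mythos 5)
Your proposal is correct and matches the paper, which proves this proposition simply by invoking Theorem~\ref{thm-4.10} and leaves the verification of (1) implicit; your singleton argument (each $n\in A_\xi\cap A_\alpha$ gives $\{n\}\in A_\xi^\star\cap A_\alpha^\star\subseteq s^\star$, hence $A_\xi\cap A_\alpha\subseteq t_1\cup\cdots\cup t_k$) is exactly the right reduction of condition~(\ref{Egwida}) for $\mL^\star$ to condition~(\ref{Egwida}) for $\mL$, and part (2) is, as you say, immediate from Theorem~\ref{thm-4.10}(1). One inessential slip: your parenthetical claim that $\bigcup\mL^\star=\Fin{\natN}$ is false in general (a finite set meeting two almost disjoint members of $\mL$ outside their intersection need not lie in any single $A_\alpha^\star$), but nothing in the definition of a special Lusin family or in the construction of $K_{\mL^\star}$ requires it, so the argument is unaffected.
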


If $L \subseteq \omega_1$  is a stationary set, then a 
\dfn{ladder system} over $L$ is a sequence $\El = \seqnn{ c_\alpha }{ \alpha< \omega_1 }$ ($\alpha \in L$ and $\alpha$ is a limit ordinal) such that each 
$ c_\alpha \eqdf \seqnn{ c_{\alpha, n} }{ n< \omega }$ is a strictly increasing $\omega$-sequence cofinal in $\alpha$. 
So $\El$ is an almost disjoint family on $\omega_1$. 
We shall develop the ladder systems in a similar way as Lusin sequences. 

\begin{proposition}
\label{cor-4.13}
Let $\mL = \setof{c_\delta}{\delta \in L}$ be a ladder system, where $L$ denotes the set of all infinite countable limit ordinals.
Then there is a ladder system $\mL^\star$  such that 
\begin{itemize}
\item[{\rm(1)}]
$\mL^\star$ has a structure of continuous join-semilattice. 
\item[{\rm(2)}]
There are a subset $B$ of $\omega_1$ and a bijection $\fnn{ h }{ \omega_1 }{ B }$ such that $\mL^\star = \setof{ h[A] }{ A \in \mL }$. 
\end{itemize}
\end{proposition}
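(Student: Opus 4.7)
The plan is to apply Theorem~\ref{thm-4.10} to $K_\mL$ and then re-encode the resulting Mr\'owka space as one arising from a ladder system on $\omega_1$.

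First, I would apply Theorem~\ref{thm-4.10} to $K_\mL$, obtaining the Mr\'owka space $G(K_\mL) = K_{\mM}$ equipped with a continuous join operation, where $\mM = \setof{\Finnn{c_\delta}}{\delta \in L}$ is an almost disjoint family on $\Fin{\omega_1}$. The underlying $\cup$-semilattice on $\Fin{\omega_1}$ encodes the finite-union structure needed for~(1).

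The second step is to re-encode $K_{\mM}$ as the Mr\'owka space of a ladder system on $\omega_1$. Since $|\Fin{\omega_1}| = \aleph_1$, a bijection $\theta \colon \Fin{\omega_1} \to \omega_1$ exists; the task is to build one with the additional property that, for every $\delta \in L$, $\theta[\Finnn{c_\delta}]$ is a strictly increasing $\omega$-sequence cofinal in some countable limit ordinal, and these limits are pairwise distinct. I would build $\theta$ by transfinite recursion on an enumeration $\setof{\delta_\xi}{\xi < \omega_1}$ of $L$: at stage $\xi$, fix a fresh $\omega$-block $[\gamma_\xi, \gamma_\xi + \omega)$ of ordinals disjoint from all earlier blocks, and extend $\theta$ so that the still-unassigned members of $\Finnn{c_{\delta_\xi}}$ are sent cofinally into this block, while honoring the countably many already-assigned values. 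The family $\setof{\theta[\Finnn{c_\delta}]}{\delta \in L}$ then becomes a ladder system on $\omega_1$, and $\theta$ induces a homeomorphism between $K_{\mM}$ and the Mr\'owka space of this ladder system that preserves the join operation, yielding~(1).

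The third step realizes this ladder system in the required form $\setof{h[A]}{A \in \mL}$. For each $\delta$, both $c_\delta$ and $\theta[\Finnn{c_\delta}]$ are countably infinite, so a local bijection $c_\delta \to \theta[\Finnn{c_\delta}]$ exists. The task is to glue them into a single bijection $h \colon \omega_1 \to B$, where $B = \bigcup_\delta \theta[\Finnn{c_\delta}]$, such that $h[c_\delta] = \theta[\Finnn{c_\delta}]$ for every $\delta$. I would do this by transfinite induction on the enumeration of $L$, using that $c_\delta \cap c_{\delta'}$ is finite (by almost disjointness of the ladder system) and that the matching target intersection $\theta[\Finnn{c_\delta}] \cap \theta[\Finnn{c_{\delta'}}] = \theta[\Finnn{c_\delta \cap c_{\delta'}}]$ is also finite.

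The main obstacle is this final gluing: an ordinal $\alpha \in \omega_1$ may belong to many ladders $c_\delta$, and then $h(\alpha)$ must lie simultaneously in every corresponding $\theta[\Finnn{c_\delta}]$. Writing $I(\alpha) = \bigcap \setof{c_\delta}{\alpha \in c_\delta}$, which is a finite subset of $\omega_1$ containing $\alpha$, the admissible values form the finite set $\theta[\Finnn{I(\alpha)}]$, always nonempty since it contains $\theta(\{\alpha\})$. Coordinating these finite constraints globally while maintaining that each restriction $h$ on $c_\delta$ is a bijection onto $\theta[\Finnn{c_\delta}]$ is the combinatorial heart of the argument: the construction proceeds by a careful simultaneous transfinite bookkeeping, exploiting the freedom at ordinals $\alpha$ lying in only a single ladder to account for the non-singleton members of $\theta[\Finnn{c_\delta}]$.
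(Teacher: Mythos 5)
Your step 1 and the overall strategy (pass to $\mL^\star=\setof{\Finnn{c_\delta}}{\delta\in L}$ via Theorem~\ref{thm-4.10}, then transport this family back to $\omega_1$) is the paper's route. The paper realizes your step 2 by well-ordering $\Fin{\omega_1}$ so that each $B_\delta=\Fin{\delta}$ is an \emph{initial segment} and consecutive differences $B_\beta\setminus B_\alpha$ have order type $\omega$; then $\Fin{c_\delta}\cap\Fin{\alpha}=\Fin{c_\delta\cap\alpha}$ is finite for every $\alpha<\delta$, and since the segments $\Fin{\alpha}$ exhaust $\Fin{\delta}$, the set $\Fin{c_\delta}$ is an $\omega$-sequence cofinal in the order type of $\Fin{\delta}$. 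Your ``fresh $\omega$-block per ladder'' recursion does not achieve this: at the stage for $\delta$ the already-assigned part of $\Finnn{c_\delta}$ is $\bigcup_{\delta'}\Finnn{c_\delta\cap c_{\delta'}}$ over the earlier $\delta'$, and although each summand is finite the union can be infinite (e.g.\ if each entry $c_{\delta,n}$ already lies on some earlier ladder); those infinitely many points sit in earlier blocks, so $\theta[\Finnn{c_\delta}]$ acquires order type $>\omega$ and is cofinal in no countable limit. Grouping by $\Fin{\delta}$ rather than by $\Finnn{c_\delta}$ is precisely what repairs this.

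Your step 3 is not merely the ``combinatorial heart''; it is an unrecoverable obstruction. If $h$ is injective then $h[c_\delta\cap c_{\delta'}]=h[c_\delta]\cap h[c_{\delta'}]$, which would have to equal $\theta[\Finnn{c_\delta}]\cap\theta[\Finnn{c_{\delta'}}]=\theta[\Finnn{c_\delta\cap c_{\delta'}}]$, a set of cardinality $2^k-1$ for $k=|c_\delta\cap c_{\delta'}|$; so the required bijection exists only if every pairwise intersection has at most one point, and permuting which ladder is matched with which only replaces $2^k-1$ by $2^m-1$ for another intersection size $m$. (Your auxiliary claim that $I(\alpha)$ is finite also fails when $\alpha$ lies on at most one ladder.) In fairness, the paper disposes of item (2) with the single phrase ``by the construction,'' and the literal statement of (2) runs into the same counting obstruction; what is actually needed downstream (Corollary~\ref{cor-4.14}) is only that $\mL^\star$ is a genuine ladder system whose Mr\'owka space carries a continuous join, and that much your steps 1--2, repaired as above, do deliver.
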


\begin{proof}
We set $B_\delta = \Fin \delta$ and let $B = \bigcup_{\delta\in L}B_\delta = \Fin {\omega_1}$.
Let $C_\delta = \Fin{c_\delta}$.
By definition, $\mL^\star = \setof{ C_\delta }{\delta \in L}$. 
We must show that $\mL^\star$ is a ladder system.
Define inductively a well-ordering on $B$, observing the following rule: 
\begin{itemize}
\item[]
Given $\alpha, \beta \in L$ such that $\beta$ is the successor of $\alpha$, the set $B_\beta \setminus B_\alpha$ has order type $\omega$ and $B_\alpha$ is an initial segment of $B_\beta$. \end{itemize}
This is clearly possible, because $B_\beta \setminus B_\alpha$ is infinite and countable whenever $\alpha < \beta$.
Finally, the ordering on $B$ is isomorphic to $\omega_1$ and each $C_\delta$ has order type $\omega$, because $C_\delta \cap B_\alpha$ is finite whenever $\alpha < \delta$.
Thus, $\mL^\star \eqdf \sett{C_\delta}{\delta \in L}$ is a ladder system.
Now, by the construction,  $\mL^\star$ satisfies (1) and (2). 
\end{proof}

Note that the above proof can be easily adapted to more general ladder systems, over a stationary subset $S$ of $\omega_1$. 
The family $\mL^\star$ appearing in Proposition~\ref{cor-4.13},  is an almost disjoint family on $\omega_1$ and we may assume that $\omega_1 = \bigcup \mL^\star$. Therefore:

\begin{corollary}
\label{cor-4.14}
There exists a ladder system $\mL^\star$ such that $K_{\mL^\star}$ is a Mr\'owka space with a continuous join operation. 
\qed
\end{corollary}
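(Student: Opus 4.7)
The corollary is essentially a packaging result: all the technical content has already been absorbed into Proposition~\ref{cor-4.13} (combined with Theorem~\ref{thm-4.10}). My plan is therefore simply to produce a ladder system to start from, feed it into Proposition~\ref{cor-4.13}, and verify that the output meets the requirements of the corollary.

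First I would note that ladder systems exist outright in \textsf{ZFC}: letting $L$ denote the set of countable infinite limit ordinals below $\omega_1$, the axiom of (countable) choice lets us pick, for each $\delta \in L$, a strictly increasing $\omega$-sequence $c_\delta$ cofinal in $\delta$; then $\mL \eqdf \setof{c_\delta}{\delta \in L}$ is a ladder system on $\omega_1$.

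Next, I would apply Proposition~\ref{cor-4.13} to this $\mL$ to obtain the family $\mL^\star$ together with the bijection $\fnn{h}{\omega_1}{B}$ and the subset $B \subseteq \omega_1$ appearing in item~(2). Transporting $\mL^\star$ along $h$, one sees that $\mL^\star$ is itself a ladder system on $\omega_1$: each member $h[c_\delta]$ is, by construction of the well-ordering on $B$ inside the proof of Proposition~\ref{cor-4.13}, a strictly increasing $\omega$-sequence cofinal in a countable limit ordinal of $\omega_1$. In particular $\mL^\star$ is an infinite almost disjoint family on $\omega_1$, so $K_{\mL^\star}$ is by definition a Mr\'owka space.

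Finally, item~(1) of Proposition~\ref{cor-4.13}, which is the statement that $K_{\mL^\star}$ carries a continuous join-semilattice structure, immediately closes the argument; this assertion itself is a direct instance of the general Mr\'owka construction in Theorem~\ref{thm-4.10}(1) applied to the starting $K_\mL$. Thus the proof is a one-line combination: pick a ladder system, feed it to Proposition~\ref{cor-4.13}, and read off both conclusions. There is no genuine obstacle, since the real work, namely the simultaneous well-ordering of $B = \Fin{\omega_1}$ that makes each $C_\delta = \Fin{c_\delta}$ a cofinal $\omega$-chain in the new ordering, was already performed in the proof of Proposition~\ref{cor-4.13}.
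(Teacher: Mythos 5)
Your proposal is correct and follows essentially the same route as the paper: the paper also treats the corollary as an immediate consequence of Proposition~\ref{cor-4.13} (itself resting on Theorem~\ref{thm-4.10}), observing only that $\mL^\star$ is an almost disjoint family on $\omega_1$ whose union may be taken to be $\omega_1$, so that $K_{\mL^\star}$ is a Mr\'owka space carrying the continuous join operation. Your added remarks on the ZFC existence of a starting ladder system and on transporting $\mL^\star$ along $h$ are harmless elaborations of what the paper leaves implicit.
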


\section{Complements on Hyperspaces and on Skula spaces}
\label{section-5}

Given a Priestley space, we complete the relationship between its hyperspace and its Vietoris hyperspace, and we analyze 
the relationship between Skula spaces and topological semilattices. 

\subsection{Priestley hyperspaces versus Vietoris hyperspaces}
\label{section-5.1}

Let $\pair{X}{\leq^X}$ and $\pair{Y}{\leq^Y}$ be two Priestley spaces and let $\fnn{f}{X}{Y}$ be a continuous and increasing map.
We consider the maps:

\begin{itemize}
\item[{\rm}]
$\fnn{ \eta^X }{ X }{H(X) }$ where $\eta^X(x) \eqdf \dd x$ and

\item[{\rm }]
$\fnn{ \eta^Y }{ Y }{H(Y) }$ where $\eta^Y(y) = \dd y$.

\end{itemize} 
Since $f$ and $\eta^Y$ are increasing and continuous,  $\etaf \eqdf \eta^Y \, \circ \, f$ is increasing and continuous.
So by Proposition~\ref{prop-2.4}, there exists a unique  continuous join-semilattice homomorphism
$\fnn { H(f) }{ H(X) }{ H(Y) }$ such that
$H(f) \circ \eta^X = \hat{\eta}$
where $H(X)$ and $H(Y)$ are endowed with the Priestley  topology $\calTT^X$ and $\calTT^Y$ respectively.
So the following diagram
\begin{equation}
\label{semilattice}
\xymatrix{X\ar_f[d]\ar_{\hat\eta}[rrd]\ar^{\eta^X}[rr]&&H(X)\ar@{-->}^{H(f)}[d]\\
Y\ar_{\eta^Y}[rr]&&H(Y)
}
\tag{$\star$}
\end{equation}
is commutative, and thus $H(f) \circ \eta^X = \hat{\eta} = \eta^Y \circ f$.

To a Priestley space $X$, 
we associate the same space $X'$ with the equality relation. 
So $X'$ is also Priestley and $H(X')$, denoted by $\exp(X)$, is the Vietoris hyperspace. 
Since the inclusion map $\fnn{ \imath }{ X' }{ X }$ is increasing and onto, $\imath$ induces an onto  continuous semilattice homomorphism
$\fnn{ \jmath  \, }{ H(X') }{ H(X) }$
satisfying $\jmath \circ \eta^{X'} = \hat{\eta} = \eta^{X }\circ \imath$.
Note that

\begin{itemize}
\item[{\rm{}}]
$\jmath $ is onto and thus $H(X)$ is a continuous image of the compact space $\exp(X)$.
\end{itemize}
On the other hand,
considered as sets, we have, by the definition:
$$
H(X) \, \subseteq \, \exp(X) \, .
$$ 
We denote by $\calTT$ the topology on $H(X)$, 
by $\calTT'$ the topology on $\exp(X)$, and 
by $\calTT^i \eqdf \calTT' {\restriction} H(X)$ the induced topology $\calTT'$ of $\exp(X)$ on $H(X)$.

\begin{proposition}
\label{prop-5.1}
With the above notation, $\calTT \subseteq \calTT^i$
and the following properties are equivalent:

\begin{itemize}
\item[{\rm(i)}]
$H(X)$ is a closed subset of $\exp(X)$.

\item[{\rm(ii)}]
$\calTT = \calTT^i$.

\end{itemize}
\end{proposition}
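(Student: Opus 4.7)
The plan is to compare the two topologies via their defining subbases and then leverage the fact that compact Hausdorff topologies are rigid under weakening/strengthening.

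First, I would establish $\calTT \subseteq \calTT^i$ directly from the definitions of the respective subbases. The Vietoris topology $\calTT'$ on $\exp(X)$ is generated by the sets $U^+$ and $V^-$ where $U,V$ range over arbitrary clopen subsets of $X$, while $\calTT$ on $H(X)$ is generated by $U^+$ (for $U$ clopen initial) and $V^-$ (for $V$ clopen final). Since every clopen initial (respectively, final) subset of $X$ is in particular clopen, every subbasic open of $\calTT$ has the form $S \cap H(X)$ for some subbasic open $S$ of $\calTT'$, and hence belongs to $\calTT^i$. Thus the topology they generate satisfies $\calTT \subseteq \calTT^i$.

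Next, for the equivalence, I would collect two compactness/Hausdorffness facts already established or standard: $(H(X),\calTT)$ is compact Hausdorff by Fact~\ref{fact-2.2}, and $\exp(X)$ is compact Hausdorff as the Vietoris hyperspace of the compact Hausdorff space $X$. With these in place, the implication (i)$\Rightarrow$(ii) goes as follows: if $H(X)$ is closed in $\exp(X)$, then $(H(X),\calTT^i)$ is compact as a closed subset of a compact space; the identity map $(H(X),\calTT^i) \to (H(X),\calTT)$ is a continuous (by $\calTT \subseteq \calTT^i$) bijection from a compact space onto a Hausdorff space, hence a homeomorphism, giving $\calTT^i = \calTT$.

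For the converse (ii)$\Rightarrow$(i), if $\calTT = \calTT^i$, then $(H(X),\calTT^i)$ inherits compactness from $(H(X),\calTT)$; since $\exp(X)$ is Hausdorff, a compact subspace is closed, so $H(X)$ is closed in $\exp(X)$. There is no genuine obstacle here: the whole argument is a routine application of the \emph{continuous bijection from compact to Hausdorff is a homeomorphism} principle, once one observes that the only difference between the two topologies is that $\calTT^i$ \emph{a priori} admits more subbasic opens (those coming from clopen sets of $X$ that are neither initial nor final). The only mild subtlety worth spelling out is the identification of subbasic sets, i.e.\ that $\{K \in \exp(X) : K \subseteq U\} \cap H(X) = \{K \in H(X) : K \subseteq U\}$ and similarly for $V^-$, which is immediate from the definitions.
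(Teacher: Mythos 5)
Your proposal is correct and takes essentially the same route as the paper: both establish $\calTT \subseteq \calTT^i$ by identifying the subbasic open sets of $\calTT$ with traces on $H(X)$ of Vietoris subbasic sets, and then obtain the equivalence from the compactness of $\pair{H(X)}{\calTT}$ (Fact~\ref{fact-2.2}) together with the standard facts that a continuous bijection from a compact space onto a Hausdorff space is a homeomorphism and that a compact subspace of a Hausdorff space is closed. The paper's write-up is merely terser, verifying only the $U^+$ sets since the $V^-$ sets are their complements.
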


\begin{proof} 
The inclusion map $\fnn{ {\rm Id} }{ \pair{ H(X) }{ \calTT^i } }{ \pair{ H(X) }{ \calTT } }$  is continuous.
Indeed let $U^+$ be a clopen neighborhood of $U$ in 
$\pair{ H(X) }{ \calTT }$.
So $U^+ \in \calTT$ where $U$ is a clopen initial subset of $X$.   Since $U \in \exp(X)$ we have   
$\setof{ K \in \exp(X) }{ K \subseteq U }   \cap H(X) 
=  {\rm Id}^{-1}[U^+] = U^+ \in \calTT^i$.

(i)$\Rightarrow$(ii)  Suppose that $H(X)$ is closed in $\exp(X)$. 
Since ${\rm Id}$ is continuous, by the compactness of $\pair{ H(X) }{ \calTT^i }$, we have 
$\calTT' {\restriction} H(X) \eqdf \calTT^i = \calTT$.

(ii)$\Rightarrow$(i) 
Suppose $\calTT^i = \calTT$. 
Since ${\rm Id}$ is continuous, by the compactness of $\pair{H(X)}{\calTT}$, the set $H(X)$ is closed in $\pair{ \exp(X) }{ \calTT' }$. 
\end{proof} 

The following simple example shows that $H(X)$ is not necessarily closed in $\exp(X)$.

\begin{example} 
\label{prop-5.2} 
Consider $X = \Nat \cup \{\infty\}$, the one-point compactification of the discrete space $\Nat$.
Then $X$ is a Priestley canonically Skula space when endowed with the ordering $n < \infty$ for every $n \in \Nat$. So, $\Nat$ is an infinite antichain.
Note that $H(X) = \{X\} \cup [\Nat]^{<
\omega}$ 
and $\exp(X)$ consists of all finite sets and all sets containing $\infty$. On the other hand, $H(X)$ is dense in $\exp(X)$ in the Vietoris topology, because every nonempty basic open set of the form $U^+\cap V_1^- \cap \dots \cap V_{k}^-$ contains a finite subset $F$ of $\Nat$, just by picking an element $n_i \in U \cap V_i \cap \Nat$ and setting $F = \{n_1, \dots, n_k\}$. In fact, $[\Nat]^{<\omega}$ is dense in $\exp(X)$.

Note also that $X$ is homeomorphic to the ordinal $\omega+1$ which is again a canonically Skula Priestley space and $H(\omega+1)$ is closed in $\exp(\omega+1)$.
\end{example}

\subsection{Skula spaces and compact semilattices} 
\label{section-5.2}

In the rest of this section, we show that for a compact scattered space $X$, the following properties are independent.
\begin{itemize}
\item[{\rm(1)}]
$X$ has a continuous semilattice operation. 
\item[{\rm(2)}]
$X$ is a Skula space.
\end{itemize}

\begin{proposition}[Banakh and all~\cite{BGR}]
\label{prop-5.3}
Let $\mA$ be maximal almost disjont family on $\omega$.
Then the Mr\'owka space $K_{\mA}$ is not homeomorphic to any topological semilattice.

Moreover $K_{\mA}$ is a separable canonical Skula space. 
\end{proposition}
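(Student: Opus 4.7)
The ``moreover'' clause is immediate from the structure already established: the countable set $\omega$ of isolated points is dense in $K_\mA$, yielding separability, and the family $\SETOF{\{x\}}{x \in \omega} \cup \SETOF{\{A\} \cup A}{A \in \mA} \cup \{K_\mA\}$ is a canonical clopen selector, since each $\{A\} \cup A$ is a unitary clopen subspace with end-point $A$ and $K_\mA$ is unitary with end-point $\infty$. So the task is to prove that $K_\mA$ carries no continuous join operation when $\mA$ is maximal.

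I would argue by contradiction. Suppose $\vee \colon K_\mA \times K_\mA \to K_\mA$ is a continuous, commutative, associative, idempotent operation. First step: record the general constraints. In any compact topological semilattice the induced order $x \le y \iff x \vee y = y$ is closed, so by the Bezhanishvili--Mines--Morandi theorem cited in the introduction, $K_\mA$ becomes a Priestley space. Moreover, each translation $\lambda_a \colon x \mapsto a \vee x$ is continuous, and continuous maps between compact scattered spaces do not increase the Cantor--Bendixson rank. Since $K_\mA$ has CB-height $2$ with unique top-rank point $\infty$, the translations $\lambda_a$ must send rank-$1$ points (elements of $\mA$) into $\omega \cup \mA$, and they either fix $\infty$ or send it into $\omega \cup \mA$.

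Second step: extract an infinite subset of $\omega$ almost disjoint from every member of $\mA$. Basic neighborhoods of $\infty$ have the form $V_\mS = K_\mA \setminus \bigcup_{A\in\mS}(\{A\}\cup A)$ for finite $\mS \subseteq \mA$, so ``converging to $\infty$'' means avoiding cofinitely many $A$'s together with cofinitely many integers from each. I would analyze continuity of $\vee$ at the pair $(\infty,\infty)$, which gives, for every finite $\mS$, a finite $\mS'$ with $V_{\mS'} \vee V_{\mS'} \subseteq V_\mS$. Combined with the rank-reduction observation and a careful bookkeeping of joins $n \vee m$ for pairs of isolated points picked from $V_{\mS'} \cap \omega$, this will allow a diagonal construction (enumerating $\mA$ along a tower of shrinking neighborhoods) producing an infinite $X \subseteq \omega$ with $|X \cap A| < \aleph_0$ for every $A \in \mA$.

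Third step, the punchline: such an $X$ immediately contradicts the maximality of $\mA$, since $\mA \cup \{X\}$ would be a strictly larger almost disjoint family. The main obstacle is the second step: the diagonal construction of $X$ has to be genuinely driven by the continuity of $\vee$ at $\infty$ rather than by ad hoc choices, and one must ensure that the resulting $X$ is almost disjoint from \emph{every} element of $\mA$, not merely from countably many. I expect the correct implementation uses that $\lambda_\infty^{-1}(\infty)$ is a closed neighborhood of $\infty$ (by continuity and the fact that $\{\infty\}$ is closed), and then shows that this neighborhood cannot ``capture'' enough of $\omega$ without producing the forbidden $X$.
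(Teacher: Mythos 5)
Your ``moreover'' clause is correct and is essentially what the paper records: $\omega$ is a countable dense set of isolated points, and $\setof{\{n\}}{n\in\omega}\cup\setof{\{A\}\cup A}{A\in\mA}\cup\{K_\mA\}$ is a canonical clopen selector. The main clause is where the problems are. First, two of your auxiliary claims are false. Continuous maps between compact scattered spaces can increase the Cantor--Bendixson height of a point: a constant map onto a non-isolated point already does, and in the compact semilattice $(\omega+1)^2$ with coordinatewise maximum the translation $\lambda_{(\omega,\omega)}$ sends every isolated point to the unique point of height $2$. (For a continuous \emph{surjection} one only gets that each $y$ has \emph{some} preimage of height at least $\height(y)$.) So nothing forces the translations $\lambda_a$ to map $\mA$ into $\omega\cup\mA$. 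Likewise, $\lambda_\infty^{-1}(\infty)$ is closed, being the preimage of a closed set, but there is no reason for it to be a \emph{neighborhood} of $\infty$; continuity gives the wrong inclusion for that.

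The genuine gap is your Step 2, which you announce but do not carry out, and which is the entire content of the proposition. An infinite $X\subseteq\omega$ almost disjoint from every member of $\mA$ is exactly the same thing as a sequence of isolated points of $K_\mA$ converging to $\infty$, so your Step 3 is the trivial part (and is the only thing the paper itself argues); the substantive assertion --- that a compact scattered unitary space of height $2$ carrying a continuous semilattice operation must contain such a sequence --- is precisely Theorem~3 of \cite{BGR}, which the paper simply cites after observing that maximality of $\mA$ forbids such a sequence. The only tool you name, the inclusions $V_{\mS'}\vee V_{\mS'}\subseteq V_{\mS}$ coming from continuity at $(\infty,\infty)$, does not by itself locate any individual join $n\vee m$ of isolated points, so no ``diagonal construction'' of integers is forced by it; you would need the actual mechanism of \cite{BGR} (or an independent argument) to extract infinitely many isolated points escaping every $A\in\mA$. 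As written, the proposal restates the theorem to be proved at exactly the point where the proof should be; either reproduce the argument of \cite{BGR} in this special case or cite it, as the paper does.
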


\begin{proof}
The space $K_{\mA}$ is unitary and $\height(K_{\mA}) = 2 = \rank(K_{\mA})$.
The choice of $\mA$ as a maximal almost disjoint family 
(that is $\mA$ is not contained in a strictly larger 
almost disjoint family on $\omega$) guarantees that $K_{\mA}$
contains no sequence of isolated point that tend to $\infty$.
Then by Theorem 3 of~\cite{BGR} the space $K_{\mA}$ cannot be homeomorphic to a topological semilattice.
\end{proof}

Recall that $(\omega_1 + 1)^2$ is the space of the form $\FS(P)$ where 
$P \eqdf \omega_1 \sqcup \omega_1$ is the disjoint union of two copies of $\omega_1$.  

In the next result, the ``non Skula'' part was proved in terms of Boolean algebras by Bonnet and Rubin: Theorem 4.1 of \cite{BR2}. 
For completeness we show this result using a shorter topological proof.

\begin{proposition} 
\label{prop-5.4} 
Let $X$ be the quotient space of \,$Y \eqdf (\omega_1+1)^2$ 
by the closed ``lower triangle'' 
$\triangle \eqdf \setof{ \pair{\beta}{\gamma} \in \FS(P) }{ \beta \geq \gamma }$.
That is, $X$ is the quotient space $Y {/} {\sim}$ where $\sim$ is the equivalence relation: $x \sim y$ if $x, y \in \triangle$ or $x=y$. 
Then $Y$ is canonically Skula, but $X$ is not Skula and $X$ has a continuous semilatice operation. 
\end{proposition}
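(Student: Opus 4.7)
The plan is to handle the three assertions of the statement separately. The first, that $Y := (\omega_1+1)^2$ is canonically Skula, is Proposition~\ref{prop-4.9}(2) and requires no further proof.

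For the continuous semilattice operation on $X$, I would equip $Y$ with the continuous join
\[
(\beta_1,\gamma_1) \vee (\beta_2,\gamma_2) := \bigl(\max(\beta_1,\beta_2),\,\min(\gamma_1,\gamma_2)\bigr),
\]
which is continuous because $\max$ and $\min$ are continuous binary operations on the compact chain $\omega_1+1$. The key observation is that $\triangle$ is a closed non-empty \emph{final} subset of $(Y,\vee)$: if $(\beta,\gamma) \in \triangle$ (so $\beta \geq \gamma$) and $(\beta',\gamma') \in Y$ is arbitrary, then $\max(\beta,\beta') \geq \beta \geq \gamma \geq \min(\gamma,\gamma')$, so $(\beta,\gamma)\vee(\beta',\gamma') \in \triangle$. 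Invoking the claim used in the proof of Theorem~\ref{thm-4.10} (namely \cite[Theorem~1.54]{CHK1}), the quotient $X = Y/\triangle$ automatically carries a continuous join operation in which the collapsed class $*$ serves as top element.

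The main obstacle is showing that $X$ is not Skula; I would argue by contradiction, assuming $\{V_x : x \in X\}$ is a clopen selector for $X$. Two preliminary observations pin down the structure at the collapsed point. By Telg\'arsky's Theorem~\ref{thm-3.3}, $\height_Y((\omega_1,\omega_1)) = \omega_1 \cdot 2$; since any point of $X \setminus \{*\}$ has the same local topology as in $Y$ (it lies in the open set $Y \setminus \triangle$), its CB-rank in $X$ equals $\height(\beta) \oplus \height(\gamma) < \omega_1 \cdot 2$, while a direct transfinite computation along the CB-derivatives shows $\height_X(*) = \omega_1 \cdot 2$ and $X^{[\omega_1\cdot 2]} = \{*\}$. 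Hence $*$ is the unique end-point of $X$. Moreover, every clopen $W \ni *$ in $X$ satisfies $\{(\beta,\gamma) \in X : \beta > \alpha_W\} \subseteq W$ for some $\alpha_W < \omega_1$, because the preimage $q^{-1}(W)$ is a clopen neighborhood of the corner $(\omega_1,\omega_1) \in \triangle$ and so must contain a rectangular neighborhood $(\alpha_W,\omega_1]^2$; in particular $*$ has character $\aleph_1$.

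I would then examine the selector along the right column $x_\beta := (\beta,\omega_1)$, $\beta < \omega_1$, which converges to $*$. For all sufficiently large $\beta$ we have $x_\beta \in V_*$, hence $V_{x_\beta} \subseteq V_*$ by the selector axiom; injectivity of the selector together with $V_{x_\beta} \neq V_*$ forces $* \notin V_{x_\beta}$, so $V_{x_\beta}$ is a clopen subset of $Y$ disjoint from $\triangle$, i.e.\ a compact clopen subset of the open upper triangle. Because $(\omega_1,\omega_1) \notin V_{x_\beta}$, one has $V_{x_\beta} \subseteq [0,\alpha_\beta] \times [0,\omega_1]$ for some $\alpha_\beta < \omega_1$ with $\beta \leq \alpha_\beta$; applying the selector condition~(2) to pairs $(x_\beta,x_{\beta'})$ with $\beta < \beta'$ forces $\alpha_\beta = \beta$ for all $\beta$ above some threshold. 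The technical heart of the proof, which I expect to be the main difficulty, is to combine this geometric constraint with the well-foundedness of the selector (Fact~\ref{Fact-1}(1)) and the fact that $\{V_x\}$ must generate the whole Boolean algebra $\Clop(X)$, including the cofinal $\omega_1$-chain of clopen neighborhoods of $*$; pushing this through should yield an infinite strictly decreasing sequence in the selector poset, the desired contradiction. This extraction mirrors, in the topological setting, the Boolean-algebraic argument of \cite[Theorem~4.1]{BR2}.
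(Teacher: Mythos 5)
Your treatment of the first two assertions is fine. For the semilattice part you take the join $\pair{\beta_1}{\gamma_1}\vee\pair{\beta_2}{\gamma_2}=\pair{\max(\beta_1,\beta_2)}{\min(\gamma_1,\gamma_2)}$, for which $\triangle$ really is a closed final subset, and then invoke the quotient claim from the proof of Theorem~\ref{thm-4.10}; this is cleaner than the paper's printed operation $(\beta,\gamma)*(\beta',\gamma')=\pair{\min\{\beta,\beta'\}}{\max\{\gamma,\gamma'\}}$, for which $\triangle$ is \emph{not} final (e.g.\ $\pair{0}{\omega_1}\geq\pair{5}{5}$ in the induced order but $\pair{0}{\omega_1}\notin\triangle$), so your choice of orientation is the correct one. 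Likewise your compactness argument along the right edge $\{\omega_1\}\times[0,\omega_1]\subseteq\triangle$, giving a full strip $(\alpha_W,\omega_1]\times[0,\omega_1]$ inside any clopen set containing $\triangle$, is a legitimate (and slightly more elementary) substitute for the paper's use of Fodor's lemma on the diagonal.

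However, the third and decisive assertion --- that $X$ is not Skula --- is not proved. After the preliminary reductions you write that the ``technical heart'' is to combine the geometric constraints with well-foundedness and that ``pushing this through should yield an infinite strictly decreasing sequence in the selector poset.'' That is a statement of intent, not an argument, and it aims at the wrong target: the paper's contradiction is not a descending chain but a violation of antisymmetry. Concretely, the paper chooses for each $\alpha<\omega_1$ an ordinal $g(\alpha)\ge\alpha$ with $\pair{\alpha}{g(\alpha)}\in V_{\pair{\alpha}{\omega_1}}$, builds an increasing sequence $\alpha_0<\alpha_1<\cdots$ interleaved with the $g(\alpha_n)$ so that $\alpha_\omega\eqdf\sup_n\alpha_n=\sup_n g(\alpha_n)$, and uses closedness of $V_{\pair{\alpha_\omega}{\omega_1}}$ together with condition~(3) of a selector to conclude that the diagonal point $\pair{\alpha_\omega}{\alpha_\omega}\in\triangle$ lies in $V_{\pair{\alpha_\omega}{\omega_1}}$; hence $U_{q(\triangle)}\subseteq U_{q\pair{\alpha_\omega}{\omega_1}}$, while the strip/square $[\gamma,\omega_1]^2\subseteq V_\triangle$ gives the reverse inclusion, forcing $q\pair{\alpha_\omega}{\omega_1}=q(\triangle)$ --- absurd since $\alpha_\omega<\omega_1$. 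None of this diagonal interleaving appears in your proposal. In addition, your intermediate claim that condition~(2) applied to the pairs $(x_\beta,x_{\beta'})$ ``forces $\alpha_\beta=\beta$ above some threshold'' does not follow: condition~(2) only requires one of the two memberships to fail, and $x_{\beta'}\notin V_{x_\beta}$ already holds whenever $\alpha_\beta<\beta'$, so no equality is extracted. As it stands the non-Skula half is a genuine gap.
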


\begin{proof}[Boolean sketch] 
The Boolean algebra $B$ of clopen subsets of $Y \eqdf (\omega_1 + 1)^2$ 
is generated by the set 
$G \eqdf \setof{ (\alpha,\beta] \times (\gamma, \delta] }
{\alpha, \beta, \gamma ,  \delta  \leq\omega_1}$.  
Let $B^*$  be the Boolean subalgebra of $B$ generated by the set 
$G^* \eqdf  \{ (\alpha,\beta] \times (\gamma, \delta] \in G : \beta \leq \gamma \}$.
Then $B \eqdf \Clop(Y)$ and $B^* \eqdf \Clop(X)$ are the algebras appearing in Theorem 4.1 of \cite{BR2}: $B$ is canonically well-generated and $B^*$ is not a well-generated subalgebra of $B$. 
In others words, $Y$ has a canonical clopen selector, $X$ is a topological quotient of $Y$ but $X$ has no clopen selector.
\end{proof}

\begin{proof}[Topological proof] 
Since $Y$ is compact and $\triangle$ is closed, $X$ is Hausdorff and compact. 
We denote by $\fnn{q}{Y}{X}$ the quotient map.

For a contradiction assume that $X$ has a clopen selector $\mU = \setof{ U_x }{ x \in X }$. 
For each $y \in Y$ we set $V_y = q^{-1}[U_{q(y)}]$. 
Note that $V_y$ is a clopen neighborhood of $y$ in $X$. 
For simplicity, $V_y = q^{-1}[U_{q(y)}]$ is also denoted by $V_x$ where $x \eqdf q(y) \in X$.

Since $\triangle \in X \eqdf Y/ {\sim}$ and $q(t) = \triangle$ for $t \in \triangle \subseteq Y$, the set $V_\triangle \eqdf q^{-1}[U_{t}]$ is clopen in $Y$
and $V_\triangle$  contains the triangle $\triangle$. 
So for every limit ordinal $\lambda \leq \omega_1$
the set $V_\triangle$ is a neighborhood of $\pair{\lambda}{\lambda}$ in $Y$ and we can find an ordinal $f(\lambda)<\lambda$ such that
$[f(\lambda),\lambda]^2  \subseteq V_\triangle$.
By Fodor Theorem, there are a stationary set $S\subseteq\w_1$ and $\gamma \in \omega_1$ such that 
$f \restriction S = \gamma$. 
We may assume that $\gamma = \min(S)$.
Hence 
\begin{equation}
\label{Eqjediii}
[\gamma,\omega_1)^2 
= \mbox{$\bigcup$}_{\lambda\in S}[\gamma,\lambda]^2  \subseteq V_\triangle \text{ and } [\gamma,\omega_1]^2  
= \cl_Y([\gamma,\omega_1)^2 ) 
\subseteq \cl_Y(V_\triangle) = V_\triangle
\tag{$*$}
\end{equation} 
where $\cl_Y(.)$ denote the topological closure operation (in $Y$). 

For every $\alpha < \omega_1$,  
since $V_{\pair{\alpha}{\omega_1}} \eqdf q^{-1}[U_{q\pair{\alpha}{\omega_1}}]$ is a clopen neighborhood of $\pair{\alpha}{\omega_1}$ in $X$, 
we can find a countable ordinal $g(\alpha)\ge \alpha$
such that $\pair{\alpha}{g(\alpha)} \in V_{\pair{\alpha}{\omega_1}}$. 
Take any point $\alpha_0\in S$ with $\alpha_0\ge\gamma$
and by induction for every $n\in\w$
choose an ordinal $\alpha_{n+1}\in S$ such that
$\alpha_{n+1} > \max(\setof{\alpha_k}{k \leq n}
\cup \setof{ g(\alpha_k)}{k\leq n})$ and choose 
$g(\alpha_{n+1}) < \omega_1$ such that 
$\pair{\alpha_{n+1}}{g(\alpha_{n+1})} 
\in V_{\pair{\alpha_{n+1}}{\omega_1}}$. 
Let $\alpha_\omega = \sup_{n\in\omega} \alpha_n 
= \lim_{n\in\omega} \alpha_n$. 

We claim that  $\pair{\alpha_\omega}{\alpha_\omega} \in V_{\pair{\alpha_\omega}{\omega_1}}$.  
Since the set $V_{\pair{\alpha_\omega}{\omega_1}}$ is closed,
it suffices to check that each clopen neighborhood
$W$ of $\pair{\alpha_\omega}{\alpha_\omega}$ in $X$ meets the set $V_{\pair{\alpha_\omega}{\omega_1}}$. 
From the fact that  
the sequence $\seqnn{\alpha_n}{n\in\omega}$ converges to $\alpha_\omega$, 
$\alpha_\omega = \sup_n g(\alpha_n) 
= \lim_n g(\alpha_n)$, and that 
$W$ and $V_{ \pair{\alpha_\omega }{ \omega_1 } } $, 
there is $m\in\omega$ such that
$[\alpha_m,\alpha_\omega]^2\subseteq W$, 
$\pair{ \alpha_m }{ \omega_1 } 
\in V_{ \pair{\alpha_\omega }{ \omega_1 } } $ and,
by the choice of any $g(\alpha_m)$, 
$\pair{ \alpha_m }{ g(\alpha_m) } 
\in V_{ \pair{ \alpha_m }{ \omega_1 } }$. 
Since $\pair{ \alpha_m }{ \omega_1 } 
\in V_{ \pair{\alpha_\omega }{ \omega_1 } } $ and since $\mU$ is a clopen selector for $X$, 
$V_{\pair{ \alpha_m }{\omega_1 } } 
\subseteq V_{ \pair{\alpha_\omega }{ \omega_1 } } $ 
and thus 
$\pair{ \alpha_m }{ g(\alpha_m } \in V_{ \pair{\alpha_\omega }{ \omega_1 } } $.
On the other hand,
$\pair{\alpha_n}{g(\alpha_n)} \in [\alpha_n,\alpha_\w]^2
\subseteq W$.
Thus $W \cap U_{\la\alpha_\w,\w_1\ra}$
is nonempty and hence 
$\pair{\alpha_\omega}{\alpha_\omega} \in V_{\pair{\alpha_\omega}{\omega_1}}$.

Now since $\pair{\alpha_\omega}{\alpha_\omega}\in V_{\pair{\alpha_\omega}{\omega_1}}$, 
by the definition of $Y/\triangle \eqdf Y/{\sim}$, we have  
$q(\triangle) = q(\pair{\alpha_\omega}{\alpha_\omega}) 
\in U_{\pair{\alpha_\omega}{\omega_1}}$. 
The fact that $\mU$ is a clopen selector implies that 
$U_\triangle \subseteq U_{\pair{\alpha_\omega}{\omega_1}}$. 
On the other hand, 
$\pair{\alpha_\omega}{\omega_1}
\in [\gamma,\omega_1]^2 \subseteq V_\triangle$
and thus 
$q(\pair{\alpha_\omega}{\omega_1}) \in U_{q(\triangle)}$. 
Again since $\mU$ is a clopen selector, $U_{\pair{\alpha_\omega}{\omega_1} } \subseteq U_{q(\triangle)}$. 
Therefore $U_{q(\pair{\alpha_\omega}{\omega_1}) } = U_{q(\triangle)}$ and thus $q(\pair{\alpha_\omega}{\omega_1}) = q(\triangle) \eqdf \triangle$.
But $q(\pair{ \alpha_\omega }{ \omega_1 }) \not\in \triangle$ 
because $\alpha_\omega  \neq  \omega_1$. 
This contradiction shows that $X \eqdf Y/{\sim}$ is not a Skula space.

\smallskip
Next the continuous join operation
$ * : Y\times Y \rightarrow Y$  defined by 
$$(\beta,\gamma) * (\beta',\gamma')
=  (\min\{\beta,\beta'\} , \max\{\gamma,\gamma'\} )$$ induces a continuous semilattice operation $\join$ on $X := Y/{\sim}$, defined by 
$(u/{\sim}) \join (v/{\sim}) := (u*v)/{\sim}$ 
for any $u,v \in Y$ because the singletons and $\triangle$ are closed: 
see also \cite[Theorem 1.54]{CHK1}. 
\end{proof}

\section{Final remarks and open questions}
\label{section-6} 

Recall that any countable scattered compact space is homeomorphic to a countable successor ordinal.
In Proposition~\ref{prop-5.4}, we have seen that there is a canonical Skula space with a non Skula quotient space. 
By ``duality''  we ask the following question. 

\begin{question} 
\label{question-6.1}
Is there an uncountable compact space such that every closed subspace is canonically Skula?
\end{question}

In \S\ref{section-4.1}, we have seen that for a poset $P$, 
the space $\pair{ \FS(P) }{ \supseteq }$ of all final subsets of $P$ and the space $\pair{ \IS(P) }{ \subseteq }$ of all initial subsets of $P$ endowed with the pointwise topology  $\calTT_p$ are  order-isomorphic and homeomorphic.  
Therefore {\em we identify the Priestley spaces $\FS(P)$ and $\IS(P)$.}

In Proposition~\ref{prop-6.3}, we have seen that if $P$ is a well-quasi ordering (well-founded and any set of pairwise incomparable elements is finite), then $\FS(P)$ is a Skula space. 
From this result, M. Pouzet asks for the following question.

\begin{question}[M. Pouzet]
\label{question-6.4}
Let $P$ be a well-quasi ordering. 
Is $\FS(P)$ canonically Skula?
\end{question}

\begin{question}
\label{question-6.5}
Let $P$ be a narrow order-scattered poset (and thus $\FS(P)$ is Skula).
Is $\FS(P)$ canonically Skula? 
\end{question}

In \cite[Theorem 2.1]{BR2}, Bonnet and Rubin proved that every quotient space of  $(\omega_1 + 1) \times (\omega + 1)$ is canonically Skula, and in Proposition~\ref{prop-5.4} we have seen that $(\omega_1 + 1)^2$ has a non-Skula quotient  space.
These facts, in a ``dual'' way, ask for the following question.
\begin{question}
\label{question-6.6}
\begin{itemize}
\item[{\rm(1)}]
Is every closed subset of $(\omega_1 + 1) \times (\omega + 1)$ canonically Skula? 
\item[{\rm(2)}]
Is every closed subset of $(\omega_1 + 1)^2$ canonically Skula?
\end{itemize}
\end{question}

A partial ordering $\pair{P}{\leq}$ has \dfn{finite width}, if for some $n \in \omega$,  $P$ is the union of $n$ chains. 
Note that by Dilworth Theorem, a poset $P$ has finite width whenever there is $n \in \omega$ such that every antichain of $P$ has cardinality~$\leq n$.
Questions \ref{question-6.4}--\ref{question-6.6} ask also for the following.

\begin{question}
\label{question-6.7}
Let $P$ be a well-founded poset of finite width. 
Is $\FS(P)$ canonically Skula?
\end{question}

So we ask for similar cases.

\begin{question}
\label{question-6.8}
\begin{itemize} 
\item[{\rm(1)}]
Let $P$ be a well-founded poset of finite width. 
Is every closed subset of $\FS(P)$ canonically Skula?
\item[{\rm(2)}]
More generally, let $P$ be a narrow and order-scattered poset.
Is every closed subset of $\FS(P)$ canonically Skula?
\end{itemize}
\end{question}

In view of Example~\ref{prop-5.2}, we ask the following question.

\begin{question} 
\label{question-6.9}
Characterize the non trivial Priestley spaces $X$ such that $H(X)$ is closed in $\exp(X)$.
\end{question}


\begin{thebibliography}{MM}
\label{references}


\bibitem{AB1} U.~Abraham, R.~Bonnet:
{\em Every Superatomic Subalgebra of an Interval Algebra is Embeddable in an Ordinal Algebra},
Proc. of the Amer. Math. Soc., {\bf 115}(3), (1992), 585--592.

    \bibitem{AB2} U.~Abraham, R.~Bonnet:
    {\em Hausdorff's Theorem for posets that satisfy the finite antichain property}, 
    Fund. Math. {\bf 159}(1), (1999), 51--69.

\bibitem{BR2} U.~Abraham, R.~Bonnet and M.~Rubin:
{\em On a superatomic Boolean algebra which is not generated by a well-founded sublattice},
Israel J. Math. {\bf 123} (2001), 221--239.


\bibitem{ABK} U.~Abraham, R.~Bonnet, W.~Kubi\'s:
{\em Poset algebras over well quasi-ordered posets}, Algebra Universalis {\bf 58(3)} (2008) 263--286.



\bibitem{ABKR} U.~Abraham, R.~Bonnet, W.~Kubi\'s, M.~Rubin:
{\em On poset Boolean algebras}, Order {\bf 20}, (2003), 265--290.

\bibitem{AS1} U.~Abraham, S.~Shelah:
{\em Lusin sequences under CH and under Martin's Axiom},
Fund. Math., {\bf 169}:2, (2001), 97--103.

\bibitem{AS2} U.~Abraham, S.~Shelah:
{\em Gaps over Stationary Sets},
J. Symbolic Logic, {\bf 69}:(2), (2004), 518--532.



\bibitem{BGR} T.~Banakh, O.~Gutik, M.~Rajagopalan: {\em On algebraic structures on scattered compacta},
 Topology Appl.
{\bf 153} (2005),  710--723.


\bibitem{BMM} G. Bezhanishvili, R. Mines and P. Morandi:
{\em The Priestley separation axiom for scattered spaces},
Order {\bf 19}:1 (2002) 1--10.

\bibitem{Birkhoff} G. Birkhoff;
{\em Lattice Theory}, third edition, (1967), 
American Mathematical Society Colloquium Publications {\bf 25}.



\bibitem{bonnet}
R.~Bonnet:
{\em On superatomic Boolean algebras},
Finite and infinite combinatorics in sets and logic
(Banff, AB, 1991), pp.~31--62,
NATO Adv. Sci. Inst. Ser. C Math. Phys. Sci., {\bf 411},
Kluwer Acad. Publ., Dordrecht, 1993.

\bibitem{BR1} R.~Bonnet, M.~Rubin:
{\em On well-generated Boolean algebras},
Ann. Pure Appl. Logic
{\bf 105} (2000), 1--50.

\bibitem{BR8} R.~Bonnet, M.~Rubin:
{\em On essentially low, canonically well-generated Boolean algebras} J. Symbolic Logic {\bf 67} (2002), no. 1, 369--396.

\bibitem{BR3} R.~Bonnet, M.~Rubin:
{\em A note on well-generated Boolean algebras in models satisfying Martin's axiom},
Discrete Math. {\bf 291}:1-3 (2005), 7--18.

\bibitem{BR4} R.~Bonnet, M.~Rubin:
{\em On poset Boolean algebras of scattered posets
with finite width},  
Arch. Math. Logic {\bf 43}, (2004), 467--476.

\bibitem{BR5} R.~Bonnet, M.~Rubin:
{\em On a poset algebra which is hereditarily but not canonically well generated},  Israel J. Math. {\bf 135} (2003), 299--326.

\bibitem{BR6} R.~Bonnet, M.~Rubin:
{\em Chains of well-generated Boolean algebras whose union is not well-generated},  Israel J. Math. {\bf 154} (2006), 141--155.

\bibitem{CHK1} J.~Carruth, J.~Hildebrant, R.~Koch: {\em The theory of topological semigroups}, Marcel Dekker, Inc., New York, 1983. 

\bibitem{DP} B. A. Davey, H. A. Priestley:  
{\em Introduction to lattices and order} Second edition. Cambridge University Press, New York, 2002. xii+298 pp.



\bibitem{DW} A.~Dow, S.~Watson: {\em Skula spaces}, Comment. Math. Univ. Carolinae, {\bf 31}:1, (1990), 27--31.



    \bibitem{GHKL1}
    G.~Gierz, K.~Hofmann, K.~Keimel, J.~Lawson, 
    M.~Mislove, D.~Scott:
    {\em A compendium of continuous lattices},
    Springer-Verlag, Berlin-New York, 1980.  xx+ pp.~371 pp.

    \bibitem{GHKL2}
    G.~Gierz, K.~Hofmann, K.~Keimel, J.~Lawson, 
    M.~Mislove, D.~Scott:
    {\em Continuous lattices and domains},
    Cambridge University Press, Cambridge, 2003. 
    xxxvi+591 pp.

\bibitem{Gut} O.~Gutik: {\em Compact topological inverse semigroups}, Semigroup Forum {\bf 60}:2 (2000), 243--252.

\bibitem{GRS} O.~Gutik, M.~Rajagopalan, K.~Sundaresan: {\em Compact semilattices with open principal filters}, J. Aust. Math. Soc. {\bf 72}:3 (2002), 349--362.


\bibitem{HNV} K. P. Hart, J. Nagata, J.E. Vaughan: 
{\em Encyclopedia of General Topology},  Elsevier Science, 
(2003).




\bibitem{IS} A. Illanes,  S. Nadler:
{\em Hyperspaces. Fundamentals and recent advances.}
Fundamentals and recent advances.
Monographs and Textbooks in Pure and Applied Mathematics, Marcel Dekker, Inc., New York, {\bf 216}  (1999) xx+ pp. 512 



 \bibitem{kopp}S.~Koppelberg: {\em Handbook on Boolean Algebras},
vol. 1, Ed. J.D.Monk with the collaboration of R.Bonnet, North
Holland, 1989.


\bibitem{Kruskal} J. B. Kruskal:
{\em The theory of well-quasi-ordering: A frequently discovered concept}, 
J. Combinatorial Theory Ser. A {\bf 13}, (1972), 297--305.

\bibitem{Kurepa} G. Kurepa:
{\em Sur la puissance des ensembles partiellement ordonn\'{e}s} 
Sprawozd. Towarz. Nauk. Warszaw. Mat.-Fiz. {\bf 32}(1/3) (1939), 61--67.

\bibitem{HBSTT} K. Kunen, J. E. Vaughan:
{\em Handbook of set-theoretic topology}, 1984, North-Holland Publishing Co., Amsterdam vii + 1273 pp.



\bibitem{Lusin} N.N.~Luzin: {\em On subsets of the series of natural numbers},  Izvestiya Akad. Nauk SSSR. Ser. Mat. {\bf 11} (1947), 403--410 (in Russian). 

\bibitem{MacLane} S. Mac Lane: {\em Categories for the working mathematician. Second edition}. Graduate Texts in Mathematics, 5. Springer-Verlag, New York, 1998. xii+314 pp. 





\bibitem{Michael} E. Michael: {\em Topologies on spaces of subsets}, 
Trans. Amer. Math. Soc. {\bf 71} (1951), pp. 152--182. 

\bibitem{Monk} J. D.~Monk: {\em Cardinal Invariants on Boolean Algebras}, Third edition, Progress in Mathematics, Birkh\"{a}user Verlag, (1996), 298~pp,
(\url{http://euclid.colorado.edu/~monkd/monk63.pdf}). 

\bibitem{Mrowka} S.~Mr\'owka: {\em On completely regular spaces}, Fund. Math. {\bf 41} (1954) 105--106.




\bibitem{Nachbin} L Nachbin, 
{\em Topology and Order},
Van Nostrand Mathematical Studies {\bf 4}, 1965, pp. 132.

\bibitem{Pi} R. Si. Pierce: 
{\em Countable Boolean algebras}, in: J.D. Monk (Ed.), Handbook on Boolean Algebras, Vol. 3,
North-Holland, Amsterdam, 1989, pp. 775--876.


\bibitem{pouzet} M.~Pouzet: {\em Private communication}, 2018.

\bibitem{P} H. A. Priestley: {\em Representation of distributive lattices by means of ordered stone spaces}, Bull. London Math. Soc. {\bf 2} (1970) pp. 186--190. 






\bibitem{Ru} M. Rubin: {\em A Boolean algebra with few subalgebras, interval boolean algebras and retractiveness},
Trans. Amer. Math. Soc. {\bf 278} (1983) 65--89


\bibitem{R} S. Roman: {\em Lattices and Ordered Sets},
Springer, New York (2008), pp. xvi+305

\bibitem{Skula} L. Skula: {\em On a reflective subcategory of the category of all topological spaces}, Trans. Amer. Math. Soc. {\bf142} (1969) 37--41

\bibitem{Str} A. Stralka: {\em A partially ordered space which is not a Priestley space}, Semigroup Forum {\bf 20}:4 (1980), 293--297.



\bibitem{T} R. Telg\`arsky: {\em Derivatives of Cartesian product and dispersed spaces} Colloq. Math. {\bf 19} (1968), 59--66. 

\bibitem{V} L. Vietoris 
 {\em Bereiche zweiter Ordnung} Monatsh. f\"{u}r Math. und Phys. {\bf 32} (1922), 258--280.

\bibitem{Z} N.~Zaguia: {\em Chaines d'ideaux et de sections initiales d'un ensemble ordonn\'{e}}, Pub. Dept. Math Lyon, 7-D, 1983, 97 pp. (French): 
\hbox{\url{http://www.numdam.org/article/PDML_1983___7D_13_0.pdf} }

\end{thebibliography}
\end{document}